\documentclass[a4paper,10pt]{article}
\usepackage{fullpage}
\usepackage{amsmath,amssymb,latexsym,amsthm}
\usepackage[pdftex]{graphicx}
\usepackage{graphics}
\usepackage{psfrag}
\usepackage{color, epsfig}
\usepackage{mathrsfs}
\usepackage[frenchb,english]{babel}
\usepackage[latin1]{inputenc}
\usepackage[T1]{fontenc}
\def\R{{\mathbb R}}

\def\S{{\mathcal S}}
\def\T{{\mathbb{T}}}
\def\ds{\displaystyle}

\def\div{\operatorname{div}}

\newcommand{\norm}[1]{\left\Vert#1\right\Vert}
\newtheorem{theorem}{Theorem}[section]

\newtheorem{proposition}[theorem]{Proposition}
\newtheorem{lemma}[theorem]{Lemma}

\numberwithin{equation}{section}

%
\title{Local exact controllability for the $2$ and $3$-d compressible Navier-Stokes equations\footnote{The authors were partially supported by the ANR-project CISIFS 09-BLAN- 0213-03 and by the project ECOSEA from FNRAE.}}
\author{
Sylvain Ervedoza\footnote{Institut de Mathématiques de Toulouse ; UMR5219; Université de Toulouse ; CNRS ; UPS IMT, F-31062 Toulouse Cedex 9, France, {\tt ervedoza@math.univ-toulouse.fr}}
\and
Olivier Glass\footnote{Ceremade (UMR CNRS no. 7534),
Universit\'e Paris-Dauphine,
Place du Mar\'echal de Lattre de Tassigny,
75775 Paris Cedex 16, France, {\tt glass@ceremade.dauphine.fr}
} 
\and 
Sergio Guerrero\footnote{Universit\'{e} Pierre et Marie Curie - Paris 6,
UMR 7598 Laboratoire Jacques-Louis Lions, Paris, F-75005 France, {\tt guerrero@ann.jussieu.fr}}
}
\date\today
\begin{document}
\maketitle
\begin{abstract}
The goal of this article is to present a local exact controllability result for the $2$ and $3$-dimensional compressible Navier-Stokes equations on a constant target trajectory when the controls act on the whole boundary. Our study is then based on the observability of the adjoint system of some linearized version of the system, which is analyzed thanks to a subsystem for which the coupling terms are somewhat weaker. In this step, we strongly use Carleman estimates in negative Sobolev spaces.
\end{abstract}
%
%
%
%
%
%
%
%
%
%
\section{Introduction}
%
%
%
%
%
%
%
%
%
We consider the isentropic compressible Navier-Stokes equation in dimension two or three in space, in a smooth bounded domain $\Omega \subset \mathbb{R}^d$, $d = 2$ or $d = 3$:
\begin{equation} \label{Navier-Stokes}
\left\{ \begin{array}{ll}
\partial_{t} \rho_\S + \div(\rho_\S u_\S) =0 &\text{ in } (0,T) \times \Omega, \\
\rho_\S (\partial_{t} u_\S + u_\S \cdot \nabla u_\S )  - \mu \Delta u_\S - (\lambda + \mu) \nabla \div (u_\S)
+ \nabla p(\rho_\S) =0 &\text{ in } (0,T) \times \Omega.
\end{array} \right.
\end{equation}
Here $\rho_\S$ is the density, $u_\S$ the velocity and $p$ is the pressure, which follows the standard polytropic law:
\begin{equation} \label{Pression}
p(\rho_\S) = \kappa \rho_\S^{\gamma},
\end{equation}
for some $\gamma \geq 1$ and $\kappa >0$. (Actually, our proof will only require $p$ to be $C^3$ locally around the target density.)\par
The parameters $\mu$ and $\lambda$ correspond to constant viscosity parameters and are assumed to satisfy $\mu >0$ and $d \lambda + 2\mu \geq 0$ (the only condition required for our result is $\mu>0$ and $\lambda + 2 \mu >0$). \par
In this work, we intend to consider the local exact controllability around constant trajectories $(\overline{\rho},\overline{u}) \in \R_+^* \times \R^d \setminus \{ 0\} $. Here, the controls do not appear explicitly in \eqref{Navier-Stokes} as we are controlling the whole external boundary $(0,T) \times \partial \Omega$ for the equation of the velocity and the incoming part $ u_\S \cdot \vec{n} <0$ of the boundary for the equation of the density, $\vec{n}$ being the unit outward normal on $\partial \Omega$, see e.g. \cite[Chapter 5]{PlotnikovSololowski}.
\par
Given $e$ a direction in the unit sphere $\mathbb{S}^{d-1}$ of $\R^{d}$, we define the {\it thickness} of some nonempty open set $A \subset \R^{d}$ in the direction $e$ as the following nonnegative number:
\begin{equation*}
\sup \left\{ \ell \geq 0 \ / \ \exists x \in A, \, x + \ell e \in A \right\}.
\end{equation*}
Our main result is the following.
\begin{theorem}\label{Thm-Main}
	Let $d \in \{2, 3\}$, $\overline{\rho} >0$ and $\overline{u} \in \R^d\setminus\{0\}$. Let $L_0>0$ be larger than the thickness of $\Omega$ in the direction $\overline{u}/|\overline{u}|$, and assume 
	\begin{equation}
		\label{Time-Condition}
		T > L_0/|\overline{u}|.
	\end{equation}
	
	There exists $\delta>0$ such that for all $(\rho_0, u_0) \in H^2(\Omega) \times H^2(\Omega)$ satisfying 
	\begin{equation}
		\label{Smallness-Init}
		\norm{(\rho_0, u_0)}_{H^2(\Omega) \times H^2(\Omega)} \leq \delta, 
	\end{equation}
	there exists a solution $(\rho_\S, u_\S)$ of \eqref{Navier-Stokes} with initial data
	\begin{equation}
		\label{NS-Init}
		\rho_\S(0, x) = \overline \rho+ \rho_0(x), \quad u_\S(0,x) = \overline u + u_0(x) \quad \text{ in } \Omega,
	\end{equation}
	and satisfying the control requirement
	\begin{equation}
		\label{NS-Goal}
		\rho_\S(T,x) = \overline \rho, \quad u_\S(T,x) = \overline u \quad \text{ in } \Omega.
	\end{equation}
	Besides, the controlled trajectory $(\rho_\S, u_\S)$ has the following regularity:
	\begin{equation}
		\label{Regularity-Controlled-Data}
		(\rho_\S, u_\S) \in C([0,T]; H^2(\Omega)) \times (L^2(0,T; H^3(\Omega)) \cap C^0([0,T]; H^2(\Omega))).
	\end{equation}
\end{theorem}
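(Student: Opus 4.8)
The plan is to reduce the nonlinear exact controllability problem to a linear controllability (observability) result via a fixed-point argument, following the classical strategy for local exact controllability of Navier-Stokes type systems, but with the specific difficulties of the compressible case (transport equation for the density, coupling with the parabolic velocity equation). First I would change variables by setting $\rho_\S = \overline\rho + \rho$, $u_\S = \overline u + u$, so that the target becomes the zero state, and Taylor-expand the pressure term using the assumed $C^3$ regularity of $p$ near $\overline\rho$. This yields a system of the form: $\partial_t \rho + \overline u \cdot \nabla \rho + \overline\rho \, \div u = f_1(\rho, u)$ for the density, and $\overline\rho(\partial_t u + \overline u \cdot \nabla u) - \mu\Delta u - (\lambda+\mu)\nabla\div u + p'(\overline\rho)\nabla\rho = f_2(\rho,u)$ for the velocity, where $f_1, f_2$ collect the quadratic (and higher) nonlinear remainders. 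The linearized operator is thus a transport equation for $\rho$ coupled with a (vectorial) parabolic equation for $u$ through the pressure gradient and the convection; note the density equation has no diffusion, which is the source of most of the technical difficulty and explains the need for Carleman estimates in negative Sobolev spaces announced in the abstract.

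Next I would establish the key linear result: a null-controllability statement for the linearized system with controls on the whole boundary, in a functional setting compatible with the fixed point (i.e., producing controlled solutions in $C([0,T];H^2)$ for the density and $L^2(0,T;H^3)\cap C([0,T];H^2)$ for the velocity, given source terms $f_1, f_2$ in appropriate spaces and small $H^2$ initial data). By duality (HUM), this is equivalent to an observability inequality for the adjoint system, which is again a transport–parabolic coupling but with the transport going backward along $-\overline u$; the time condition $T > L_0/|\overline u|$ is exactly what guarantees that every characteristic of the transport part of the adjoint, started inside $\Omega$, exits through the boundary before time $T$, so the density component can be ``observed'' from the boundary. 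Since the density equation is only transported, one cannot hope for the usual parabolic smoothing on $\rho$; instead, as the abstract indicates, I would isolate a subsystem in which the coupling is weaker, estimate the velocity via a parabolic Carleman inequality with boundary observation, and then estimate the density component in a negative-order Sobolev norm by integrating along characteristics and absorbing the coupling term (which involves $\nabla\rho$, hence one derivative that must be ``traded'' using the negative-index Carleman weights). Patching these gives the global observability inequality with the right weights.

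Then I would carry out the fixed-point argument. Given $(\tilde\rho, \tilde u)$ in a suitable ball of a weighted space, one solves the \emph{linear} controlled problem with source terms $f_1(\tilde\rho,\tilde u)$, $f_2(\tilde\rho,\tilde u)$, obtaining a controlled trajectory $(\rho, u)$ reaching zero at time $T$; the map $(\tilde\rho,\tilde u)\mapsto(\rho,u)$ must be shown to map a small ball into itself and to be continuous/compact (or a contraction) so that Schauder (or Banach) applies. The crucial quantitative point is that the source terms are at least quadratic in $(\tilde\rho,\tilde u)$, so for $\delta$ small enough the nonlinear perturbation is dominated by the linear estimate; one must also be careful that the regularity produced by the linear solve (parabolic regularity for $u$, transport regularity for $\rho$, each with weights degenerating at $t=T$) is consistent with the regularity needed to define $f_1, f_2$ and to close the loop — in particular verifying the compatibility/regularity of $\rho_\S$ as a solution of a transport equation with an $H^3$-in-space velocity field. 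The fixed point yields a solution of the full nonlinear system \eqref{Navier-Stokes} satisfying \eqref{NS-Init} and \eqref{NS-Goal}, with the regularity \eqref{Regularity-Controlled-Data}.

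The main obstacle I expect is the linear observability estimate, specifically handling the coupling in the adjoint system between the backward transport equation (no smoothing) and the backward parabolic equation. The pressure coupling $p'(\overline\rho)\nabla\rho$ feeds a full spatial derivative of the non-smoothed component into the velocity equation, and symmetrically the adjoint has a term that prevents a naive combination of a transport estimate for $\rho$ and a Carleman estimate for $u$; reconciling the two requires working with Carleman weights adapted to \emph{both} the transport characteristics and the parabolic degeneracy at the endpoints, and estimating the density in $H^{-1}$ (or a similar negative-order space) so that the ``lost'' derivative is recovered. Getting the weights, the time condition, and the boundary observation terms to be mutually compatible — and then transferring the negative-norm estimate back into the positive-norm functional setting needed for the nonlinear fixed point — is the technical heart of the proof.
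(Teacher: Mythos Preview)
Your outline is correct at the strategic level (linearize around the constant state, prove observability for the adjoint via Carleman-type estimates, close by a fixed point), and you correctly identify the transport/parabolic coupling and the role of the time condition. However, two of the paper's main technical ideas are missing, and without them the sketch as written would not close.

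First, the ``subsystem with weaker coupling'' is not obtained by merely trading a derivative through negative-order norms. In the adjoint system the equation for $z$ contains $\overline\rho\,\nabla\sigma$, and after taking a divergence the equation for $\div z$ contains $\overline\rho\,\Delta\sigma$: this is a coupling of the \emph{same} order as the principal part, and a direct combination of a parabolic Carleman for $z$ with a transport estimate for $\sigma$ will not absorb it. The key step in the paper is to introduce the adjoint ``effective viscous flux'' $q=\nu\,\div z+\overline\rho\,\sigma$ (with $\nu=\lambda+2\mu$); in the variables $(\sigma,q)$ the system becomes a transport equation for $\sigma$ and a parabolic equation for $q$ whose coupling is genuinely of order zero. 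Only then does the Banach fixed point between a transport-controllability result and a heat-controllability result go through, with the negative Sobolev norms ($H^{-2}$ for $\sigma,q$, $H^{-3}$ for $g_z$) arising by duality from the $H^2$--$H^4$ regularity needed on the primal side.

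Second, for the nonlinear fixed point you cannot leave the term $\check u\cdot\nabla\check\rho$ in the density equation as a ``quadratic remainder'': it is a first-order perturbation of the transport operator, it moves the characteristics, and it is not small in the Carleman-weighted spaces you need. The paper removes it by a Lagrangian-type change of variables $Y_{\check u},Z_{\check u}$ that straightens the characteristics back to those of $\overline u$; only after this change of variables are the remaining source terms truly quadratic and estimable in the weighted $L^2(H^2)\times L^2(H^1)$ spaces. A related omitted step is the extension of the problem to a large torus $\T_L$ so that the boundary control becomes a distributed control supported in $\T_L\setminus\overline\Omega$, which is what makes the Carleman machinery and the explicit transport-control construction available. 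Your last paragraph correctly anticipates that reconciling the weights is the heart of the matter, but the concrete devices that accomplish this are the $q$-variable and the flow-straightening, not a generic $H^{-1}$ transport estimate.
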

\par
Theorem \ref{Thm-Main} extends the results in \cite{EGGP} to the multi-dimensional case. As in the one dimensional case, our result proves local controllability to constant states having non-zero velocity. This restriction appears explicitly in the condition \eqref{Time-Condition}. As expected, this condition is remanent from the transport equation satisfied by the density which allows the information to travel at a velocity (close to) $\overline{u}$. 
\par
This transport phenomenon and its consequences on the controllability of compressible Navier-Stokes equations have been also developed and explained in the articles \cite{RosierRouchon,Chow-Ram-Raymond,Debayan-2014} focusing on the linearized equations in the case of zero-velocity. Using then moving controls, \cite{MartinRosierRouchon,ChavesRosierZuazua-2013} managed to show that controllability for a system of linear viscoelasticity can be reestablished if the control set travels in the whole domain (among some other geometric conditions, see \cite{ChavesRosierZuazua-2013} for further details). 
Let us also mention the work \cite{ChowdhuryDebanjanaRamaswamyRenardy} where the compressible Navier-Stokes equations in $1$d linearized around a constant state with non-zero velocity are studied thoroughly using a spectral approach and suitable Ingham-type inequalities. \par
\ \par
In order to prove Theorem \ref{Thm-Main}, we will deal with system \eqref{Navier-Stokes}--\eqref{Pression} thinking to it as a coupling of parabolic and transport equations, and we shall therefore borrow some ideas from previous works studying controllability of systems coupling parabolic and hyperbolic effects, in particular the works \cite{AlbanoTataru} focusing on a system of linear thermoelasticity, \cite{EGGP} for the $1$d compressible Navier-Stokes equation around a constant state with non-zero velocity, \cite{ChavesRosierZuazua-2013} for a system of viscoelasticity with moving controls, or \cite{BEG} for non-homogeneous incompressible Navier-Stokes equations. All these works are all based on suitable Carleman estimates designed simultaneously for the control of a parabolic equation following the ideas in \cite{FursikovImanuvilov} and for the control of the hyperbolic equation.
\par
Our approach will follow this path and use Carleman estimates with weight functions traveling at velocity $\overline u$ similarly as in \cite{EGGP,ChavesRosierZuazua-2013,BEG}. But we will also need to construct smooth trajectories in order to guarantee that the velocity field belongs to $L^2(0,T;H^3(\Omega))$. This space is natural as it is included in the space $L^1(0,T; \mbox{Lip}(\Omega))$ ensuring the existence and continuity of the flow. Therefore, in order to obtain velocity fields in $L^2(0,T;H^3(\Omega))$, we will use duality and develop observability estimates in negative Sobolev spaces in the spirit of the work \cite{ImaYam}. 
\par
We will not deal with the Cauchy problem for system \eqref{Navier-Stokes}--\eqref{Pression}, as our strategy directly constructs a solution of \eqref{Navier-Stokes}--\eqref{Pression}. We refer the interested reader to the pioneering works by P.-L. Lions \cite{LionsPL-Compressible} and E. Feireisl \emph{et al.} in \cite{Feireisl-Novotny-Petzeltova}. Nevertheless, we emphasize that our approach will use on the adjoint equations a new variable which is similar to the so-called viscous effective flux introduced by P.-L. Lions in \cite{LionsPL-Compressible} in order to gain compactness properties. 
\par
Let us briefly mention other related works in the literature. In particular, we shall quote the works on the controllability of compressible Euler equations, namely the ones obtained in \cite{LiRao} in the $1$-dimensional setting in the context of classical $C^1$ solutions, and the ones developed by the second author in the context of weak entropy solutions obtained in \cite{Glass-EulerComp} for isentropic $1$-d Euler equations and \cite{Glass-2014-Euler} for non-isentropic $1$d Euler equations. We also refer to the work \cite{Nersisyan} for a global approximate controllability result for the $3$-d Euler equations with controls  spanned by a finite number of modes.
When considering incompressible flows, the literature is large. We refer for instance to the works \cite{Imanuvilov2001,FerCarGueImaPuel,ImanuvilovPuelYam-2009} for several results on the local exact controllability to trajectories for the (homogeneous) incompressible Navier-Stokes equations, and to the works \cite{Co1,Glass-3d} for global exact controllability results for incompressible perfect fluids. \par
\ \par
\noindent
{\it Outline.} The article is organized as follows. Section \ref{Sec-Strategy} presents the general strategy of the proof of Theorem \ref{Thm-Main}. Section \ref{Sec-Control-Sous-System} shows the controllability of a suitable system of one parabolic and one transport equation. Section \ref{Sec-Control-NS-Linear} deduces from it a controllability result for the linearized Navier-Stokes equations. Section \ref{Sec-Proof-Main} then explains how to perform a fixed point argument using the controllability results developed beforehand, thus proving Theorem~\ref{Thm-Main}. Section \ref{Sec-Further} provides some open problems.
%
%
%
%
\section{General strategy}\label{Sec-Strategy}
%
%
%
%
%
\subsection{Main steps of the proof}
\label{Subsec:MainSteps}
Since we are controlling the whole external boundary, $\Omega$ can be embedded into some torus $\T_L$, where $\T_{L}$ is identified with $[0,L]^d$ with periodic conditions. The length $L$ is large enough (for instance $L = \hbox{diam}(\Omega) + 5| \overline u| T$) and we may consider the control problem in the cube $[0,L]^d$ completed with periodic boundary conditions with controls appearing as source terms supported in $ \T_L \setminus \overline{\Omega}$.
Our control system then reads as follows:
\begin{equation} \label{Navier-Stokes-TL}
\left\{ \begin{array}{ll}
\partial_{t} \rho_\S + \div(\rho_\S u_\S) = \check v_\rho, & \text{ in } (0,T) \times \T_L, \\
\rho_\S (\partial_{t} u_\S + u_\S \cdot \nabla u_\S )  - \mu \Delta u_\S - (\lambda + \mu) \nabla \div (u_\S) 
+ \nabla p(\rho_\S) =\check v_u,  & \text{ in } (0,T) \times \T_L,
\end{array} \right.
\end{equation}
where $\check v_\rho$ and $\check v_u$ are control functions supported in $[0,T] \times (\T_L \setminus \overline\Omega)$.
Then we set 
\begin{equation*}
	\check \rho: = \rho_\S - \overline{\rho}, \quad \check u := u_\S - \overline{u}.    
\end{equation*}
We also extend the initial data $(\rho_0, u_0)$ to $\T_L$ such that 
\begin{equation}
	\label{Norm-Init-check}
	\norm{(\check \rho_0, \check u_0)}_{H^2(\T_L) \times H^2(\T_L)} \leq C_L \norm{(\rho_0, u_0)}_{H^2(\Omega) \times H^2(\Omega)} \leq C_L \delta.
\end{equation}
With these notations, one needs to solve the following control problem: Given $(\check \rho_0, \check u_0)$ small in $H^2(\T_L) \times H^2(\T_L)$, find control functions $\check v_\rho$ and $\check v_u$ supported in $[0,T]\times (\T_L \setminus \overline\Omega)$ such that the solution of 
\begin{equation} \label{Navier-Stokes-Tothefixpoint-0}
\left\{ \begin{array}{ll}
\ds \partial_{t} \check\rho +  (\overline{u} + \check u)\cdot \nabla \check\rho + \overline{\rho} \div(\check u) = \check v_\rho + \check f_\rho (\check \rho, \check u), & \text{ in } (0,T) \times \T_L, \\
\ds \overline{\rho} (\partial_{t} \check u + (\overline{u} + \check u)\cdot \nabla \check u )  - \mu \Delta \check u - (\lambda + \mu) \nabla \div (\check u) 
+ p'(\overline{\rho}) \nabla \check \rho =\check v_u + \check f_u (\check\rho, \check u), & \text{ in } (0,T) \times \T_L,
\end{array} \right.
\end{equation}
with initial data
\begin{equation} \label{InitData-0}
\check \rho(0, x) = \check \rho_0(x), \quad \check u(0,x) = \check u_0(x) \quad \text{ in } \T_L,
\end{equation}
and source terms
\begin{eqnarray}
	\label{SourceTerm-Tilde-F-rho}
	\check f_\rho(\check \rho, \check u ) &=& - \check \rho \div(\check u), 
	\\
	\label{SourceTerm-Tilde-F-u}
	\check f_u(\check \rho, \check u) &=& - \check \rho (\partial_t \check u + (\overline{u} + \check u) \cdot \nabla \check u) 
- \nabla (p(\overline{\rho} + \check \rho) - p'(\overline{\rho}) \check \rho),
\end{eqnarray}
satisfies
\begin{equation} \label{ControlReq-0}
\check \rho(T) = 0, \quad \check u(T) = 0 \quad \text{ in } \T_L.
\end{equation}
To take the support of the control functions $\check v_\rho$ and $\check v_u$ into account, we introduce a smooth cut-off function $\chi \in C^{\infty}(\T_{L};[0,1])$  satisfying
\begin{equation}
	\label{Def-Chi}
	\left\{
		\begin{array}{l}
		\ds \chi (x) =0 \ \text{ for all } x \text{ such that } d(x, \Omega) \leq \varepsilon, 
		\\
		\ds \chi (x) =1 \ \text{ for all } x \text{ such that } d(x, \Omega) \geq 2 \varepsilon,
		\end{array}
	\right.
\end{equation}
and we will look for $\check v_\rho$ and $\check v_u$ in the form
\begin{equation*}
\check v_\rho = v_{\rho} \chi \ \text{ and } \check v_u = v_{u} \chi.
\end{equation*}
Now in order to solve the controllability problem \eqref{Navier-Stokes-Tothefixpoint-0}--\eqref{ControlReq-0}, we will use a fixed point argument. 
A difficulty arising when building this argument is that the term $\check u \cdot \nabla \check\rho$ in \eqref{Navier-Stokes-Tothefixpoint-0}$_{(1)}$ is very singular.
Hence we start by removing this term via a diffeomorphism close to the identity. To be more precise, we define the flow $X_{\check u} = X_{\check u}(t,\tau, x)$ corresponding to $\check u$ and defined for $(t, \tau, x) \in [0,T] \times [0,T] \times \T_L$ by the equation
\begin{equation}
	\label{Flow-u}
	\frac{dX_{\check u}}{dt}(t,\tau, x) = \overline{u} + \check u(t,X_{\check u}(t,\tau,x)), \quad t \in [0,T], \qquad X_{\check u}(\tau, \tau, x) = x.
\end{equation}
In order to give a sense to \eqref{Flow-u}, we require $\check u\in L^1(0,T; W^{1,1}(\T_L))$ and $\div(\check u) \in L^1(0,T; W^{1, \infty}(\T_L))$ (see \cite{DiPernaLions-89}). But as we will work in Hilbert spaces with integer indexes, we will rather assume the stronger assumption $\check u \in L^2(0,T; H^3(\T_L))$, in which case, the flow $X_{\check u}$ is well-defined classically by Cauchy-Lipschitz's theorem. 
We then set, for $(t,x) \in [0,T] \times \T_L$,
\begin{equation}
	\label{RedressingFlow}
	Y_{\check u}(t,x) = X_{\check u} (t,T, X_0(T,t,x)),  
	\quad
	Z_{\check u}(t,x) = X_0(t,T, X_{\check u} (T,t,x)),
\end{equation}
which are inverse one from another, i.e. $Y_{\check u}(t, Z_{\check u}(t,x)) = Z_{\check u}(t,Y_{ \check u}(t,x)) = x$ for all $(t,x) \in [0,T] \times \T_L$.
For $\check u$ suitably small, both transformations $Y_{\check u}(t,\cdot)$ and $Z_{\check u}(t,\cdot)$, $t \in [0,T]$, are diffeomorphism of $\T_L$ which are close to the identity map on the torus. 
This change of variable is reminiscent of the Lagrangian coordinates and allow to straighten the characteristics. \par 
We thus set, for $(t,x) \in [0,T] \times \T_L$,
\begin{equation}
	\label{Dico-rho-u-tilde-rho-u}
	\rho(t,x) = \check \rho (t,Y_{\check u}(t,x)), \quad u(t,x) = \check u(t, Y_{\check u}(t,x)), 
\end{equation}
After tedious computations developed in Appendix \ref{Appendix}, our problem can now be reduced to find controlled solutions $(\rho, u)$ of
\begin{equation} \label{Navier-Stokes-Tothefixpoint}
\left\{ \begin{array}{ll}
\ds \partial_{t} \rho +  \overline{u} \cdot \nabla \rho + \overline{\rho} \div(u) = v_\rho \chi + f_\rho (\rho, u), & \text{ in } (0,T) \times \T_L, \\
\ds \overline{\rho} (\partial_{t} u + \overline{u} \cdot \nabla u )  - \mu \Delta u - (\lambda + \mu) \nabla \div ( u) 
+ p'(\overline{\rho}) \nabla \rho = v_u \chi + f_u (\rho, u), & \text{ in } (0,T) \times \T_L,
\end{array} \right.
\end{equation}
for some $\varepsilon >0$ small enough, with initial data given by
\begin{equation} \label{InitData-0-1}
	\rho(0, x) = \check \rho_0(Y_{\check u}(0,x)), \quad u(0,x) = \check u_0(Y_{\check u}(0, x)) \quad \text{ in } \T_L,
\end{equation}
and source terms $f_\rho(\rho, u)$ given by 
\begin{equation*}
	f_\rho(\rho, u)  = 
	- \rho D Z_{\check u}^t(t, Y_{\check u} (t,x)):Du 	
	 - \overline{\rho} ( DZ_{\check u}^t (t,Y_{\check u} (t,x)) - I):Du, 
\end{equation*}
and $f_u(\rho, u)$ by 
\begin{align*}
	& 
	f_{i,u}(\rho, u) 
	= 
	- \rho (\partial_t u_i + \overline u \cdot \nabla u_i) 
	+ 
	\sum_{j = 1}^d \partial_i Z_{j,\check u}(t,Y_{\check u}(t,x)) \partial_j ( p(\overline \rho + \rho) - p'(\overline\rho) \rho)
	\\
	&
	+ \mu 
	\left(
	 \sum_{j,k, \ell = 1}^d \partial_{k,\ell} u_i \left(\partial_j Z_{k, \check  u}(t, Y_{\check  u}(t,x)) - \delta_{j,k}\right)\left(\partial_j Z_{\ell, \check  u}(t, Y_{\check  u}(t,x)) - \delta_{j,\ell}\right) + \sum_{k = 1}^d \partial_k u_i \Delta Z_{k, \check  u}(t,Y_{\check  u}(t,x)) 
	\right)
	\\
	&
	+ (\lambda + \mu) 
	\left( 	
	\sum_{j,k, \ell =1}^d \partial_{k, \ell} u_j (\partial_j Z_{k,\check  u} (t,Y_{\check  u}(t,x)) - \delta_{j,k}) (\partial_i Z_{\ell, \check  u}(t,Y_{\check  u}(t,x)) - \delta_{i, \ell})
	\right) 
	\\
	& 
	+(\lambda +\mu) 
	\left(
		\sum_{j,k= 1}^d \partial_{i,j} Z_{k, \check u}(t, Y_{\check  u}(t,x)) \partial_k u_j
	\right) 
	- p'(\overline{\rho}) 
	\left(
	 \sum_{j = 1}^d ( \partial_i Z_{j,\check u}(t,Y_{\check u}(t,x)) - \delta_{i,j}) \partial_j \rho
	\right),
\end{align*}
where $\delta_{j,k}$ is the Kronecker symbol ($\delta_{j,k} = 1$ if $j = k$, $\delta_{j,k} = 0$ if $j \neq k$), 
and satisfying the controllability requirement
\begin{equation} \label{ControlReq}
	\rho(T) = 0, \quad u(T) = 0 \quad \text{ in } \T_L.
\end{equation}
The corresponding control functions in \eqref{Navier-Stokes-Tothefixpoint} will then be given for $(t,x) \in [0,T] \times \T_L$ by 
\begin{equation}
	 \check v_\rho(t,x ) = \chi (Z_{\check u}(t,x)) v_\rho(t,Z_{\check u}(t,x)), \quad \check v_u(t,x) = \chi(Z_{\check u}(t,x)) v_u(t,Z_{\check u}(t,x)), 
\end{equation}
which are supported in $[0,T] \times (\T_L \setminus \overline\Omega)$ provided that
\begin{equation}
	\label{Condition-Chi}
	\chi(Z_{\check u}(t,x)) = 0 \hbox{ for all } (t,x) \in [0,T] \times \overline{\Omega}.
\end{equation}
Let us then remark that the map $Y_{\check u}$ can be computed starting from $u$. Indeed, we have that $Y_{\check u}(t,X_0(t, T,x)) = X_{\check u}(t,T,x)$ so that by differentiation with respect to the time variable, we get, for all $(t,x) \in [0,T] \times \T_L$,
$$
	\partial_t Y_{\check u}(t, X_0(t,T,x)) + \overline u \cdot \nabla Y_{\check u}(t,X_0(t,T,x)) = \overline u + \check u(t,X_{\check u} (t,T,x)). 
$$
In particular, using this equation at the point $X_0(T,t,x)$, we obtain
$$
	\partial_t Y_{\check u} (t,x) + \overline u \cdot \nabla Y_{\check u}(t,x) 
	= 
	\overline u + \check u(t,X_{\check u}(t,T, X_0(T,t,x))) 
	= 
	\overline u + \check u(t, Y_{\tilde u}(t,x))
	= 
	\overline u + u(t,x).
$$
\par
Next we shall introduce a map $\mathscr{F}: (\widehat {\rho}, \widehat {u}) \mapsto (\rho,u)$ defined on a convex subset of some weighted Sobolev spaces, corresponding to some Carleman estimate described later.
This fixed point map is constructed as follows. Given $(\widehat {\rho}, \widehat {u})$ small in a suitable norm, we first define $\widehat Y= \widehat Y(t,x)$ as the solution of 
\begin{equation}
	\label{Def-Hat-Y}
	\partial_t \widehat Y + \overline u \cdot \nabla \widehat Y = \overline u + \widehat  u, \hbox{ in }(0,T) \times \T_L, 
	\qquad \widehat Y(T, x) = x, \hbox{ in } \T_L,
\end{equation}
Then we define $\widehat Z = \widehat Z(t,x)$ as follows: for all $t \in [0,T]$, $\widehat Z(t, \cdot)$ is the inverse of $\widehat Y(t,\cdot)$ on $\T_L$. In other words, for all $(t,x ) \in [0,T] \times \T_L$,
\begin{equation}
	\label{Def-Hat-Z}
	\widehat Z(t,\widehat Y(t,x)) = x, \quad \widehat Y(t,\widehat Z(t,x)) = x. 
\end{equation}
We will see that for suitably small $\widehat {u}$, $\widehat Y(t,\cdot)$ is invertible for all $t \in [0,T]$, see Proposition \ref{Prop-Hat-Y}. \par
Corresponding to the initial data, we introduce
\begin{equation}
	\label{Dico-Tilde-rho-u-0-rho-u}
	\widehat  \rho_0(x) = \check \rho_0(\widehat Y(0,x)), \quad \widehat  u_0(x) = \check u_0(\widehat Y(0,x)), \hbox{ in } \T_L,
\end{equation}
and, corresponding to the source terms,
\begin{equation}
\label{SourceTermRho-hat}
	f_\rho(\widehat \rho, \widehat  u)  = 
	- \widehat \rho D \widehat Z^t(t, \widehat Y(t,x)):D\widehat u 	
	 - \overline{\rho} ( D\widehat Z^t (t,\widehat Y(t,x)) - I):D\widehat u, 
\end{equation}
and
\begin{align}
\label{SourceTermU-hat}	
& 
	f_{i,u}(\widehat  \rho, \widehat  u) 
	= 
	- \widehat \rho (\partial_t \widehat u_i + \overline u \cdot \nabla \widehat u_i) 
	+ 
	\sum_{j = 1}^d \partial_i \widehat Z_{j}(t,\widehat Y(t,x)) \partial_j ( p(\overline \rho + \widehat \rho) - p'(\overline\rho) \widehat \rho).
	\\
	&
	+ \mu 
	\left(
	 \sum_{j,k, \ell = 1}^d \partial_{k,\ell} \widehat u_i \left(\partial_j \widehat Z_{k}(t, \widehat Y(t,x)) - \delta_{j,k}\right)\left(\partial_j \widehat Z_{\ell}(t, \widehat Y(t,x)) - \delta_{j,\ell}\right) + \sum_{k = 1}^d \partial_k \widehat u_i \Delta \widehat Z_{k}(t,\widehat Y(t,x)) 
	\right)
	\notag
	\\
	&
	+ (\lambda + \mu) 
	\left( 	
	\sum_{j,k, \ell =1}^d \partial_{k, \ell} \widehat u_j (\partial_j \widehat Z_{k} (t,\widehat Y(t,x)) - \delta_{j,k}) (\partial_i \widehat Z_{\ell}(t,\widehat Y(t,x)) - \delta_{i, \ell})
	\right) 
	\notag
	\\
	& 
	+(\lambda +\mu) 
	\left(
		\sum_{j,k= 1}^d \partial_{i,j} \widehat Z_k(t, \widehat Y(t,x)) \partial_k \widehat u_j
	\right) 
	- p'(\overline{\rho}) 
	\left(
	 \sum_{j = 1}^d ( \partial_i \widehat Z_{j}(t,\widehat Y(t,x)) - \delta_{i,j}) \partial_j \widehat \rho
	\right).
	\notag
\end{align}
\par
We then look for $(\rho, u)$ solving the controllability problem
\begin{equation} \label{Navier-Stokes-fixpoint}
\left\{ \begin{array}{ll}
\ds \partial_{t} \rho +  \overline{u}\cdot \nabla \rho + \overline{\rho} \div(u) = v_\rho \chi + f_\rho (\widehat {\rho}, \widehat {u}), 
& \text{ in } (0,T) \times \T_L, \\
\ds \overline{\rho} (\partial_{t} u + \overline{u} \cdot \nabla u ) - \mu \Delta u - (\lambda + \mu) \nabla \div (u)+ p'(\overline{\rho}) \nabla \rho 
=v_u \chi + f_u (\widehat {\rho}, \widehat {u}), & \text{ in } (0,T) \times \T_L,
\end{array} \right.
\end{equation}
with initial data 
\begin{equation} \label{InitData}
	\rho(0, x) = \widehat  \rho_0(x) , \quad u(0,x) = \widehat  u_0(x) \quad \text{ in } \T_L,
\end{equation}
with source terms $f_\rho(\widehat  \rho, \widehat  u), f_u(\widehat  \rho, \widehat  u)$ as in \eqref{SourceTermRho-hat}--\eqref{SourceTermU-hat}, 
and satisfying the controllability objective \eqref{ControlReq}.
 \par
We are therefore reduced to study the controllability of the linear system
\begin{equation} \label{Navier-Stokes-Linear}
\left\{ \begin{array}{ll}
\ds \partial_{t} \rho +  \overline{u} \cdot \nabla \rho + \overline{\rho} \div(u) 
= v_\rho \chi + \widehat {f}_\rho, & \text{ in } (0,T) \times \T_L, \\
\ds \overline{\rho} (\partial_{t} u + \overline{u} \cdot \nabla u ) - \mu \Delta u - (\lambda + \mu) \nabla \div (u)
+ p'(\overline{\rho}) \nabla \rho =v_u \chi +\widehat { f}_u, & \text{ in } (0,T) \times \T_L.
\end{array} \right.
\end{equation}
Since this is a linear system, the controllability of \eqref{Navier-Stokes-Linear} is equivalent to the observability property for the adjoint equation 
\begin{equation} \label{Navier-Stokes-Adjoint}
\left\{ \begin{array}{ll}
\ds - \partial_{t} \sigma -  \overline{u} \cdot \nabla \sigma - p'(\overline{\rho}) \div(z) = g_\sigma, &\text{ in } (0,T) \times \T_L,  \\
\ds - \overline{\rho} (\partial_{t} z + \overline{u} \cdot \nabla z )  - \mu \Delta z - (\lambda + \mu) \nabla \div (z) 
- \overline{\rho}  \nabla \sigma =g_z, &\text{ in } (0,T) \times \T_L.
\end{array} \right.
\end{equation}
The main idea to get an observability inequality for \eqref{Navier-Stokes-Adjoint} is to remark that, taking the divergence of the equation of $z$, the equations of $\sigma$ and $\div(z)$ form a closed coupled system:
\begin{equation} \label{Navier-Stokes-Adjoint-Div-Sigma}
\left\{ \begin{array}{ll}
\ds - \partial_{t} \sigma - \overline{u}\cdot \nabla \sigma - p'(\overline{\rho}) \div(z) = g_\sigma, &\text{ in } (0,T) \times \T_L, \smallskip \\
\ds - \overline{\rho} (\partial_{t} \div(z) + \overline{u} \cdot \nabla \div(z) ) - \nu \Delta \div(z)  - \overline{\rho}  \Delta \sigma
= \div (g_z), &\text{ in } (0,T) \times \T_L, 
\end{array} \right.
\end{equation}
where 
\begin{equation*}
\nu := \lambda + 2 \mu > 0.
\end{equation*}
Now we are led to introduce a new variable $q$ as follows:
\begin{equation} \label{Def-q}
q := \nu \div(z) + \overline{\rho} \sigma.
\end{equation}
System \eqref{Navier-Stokes-Adjoint-Div-Sigma} can then be rewritten with the unknown $(\sigma, q)$ as
\begin{equation} \label{Navier-Stokes-Adjoint-Mu-Sigma}
\left\{ \begin{array}{ll}
\ds - \partial_{t} \sigma - \overline{u} \cdot \nabla \sigma + \frac{p'(\overline{\rho})\overline{\rho} }{\nu} \sigma 
=   g_\sigma+ \frac{p'(\overline{\rho})}{\nu} q , & \text{ in } (0,T) \times \T_L, \smallskip \\
\ds - \frac{\overline{\rho}}{\nu} (\partial_{t} q + \overline{u} \cdot \nabla q) -  \Delta q -  \frac{p'(\overline{\rho})\overline{\rho}^2}{\nu^2} q
= \div (g_z) + \frac{\overline{\rho}^2}{\nu} g_\sigma 
- \frac{p'(\overline{\rho})\overline{\rho}^3}{\nu^2} \sigma 	,& \text{ in } (0,T) \times \T_L.
\end{array} \right.
\end{equation}
The advantage of considering system \eqref{Navier-Stokes-Adjoint-Mu-Sigma} rather than \eqref{Navier-Stokes-Adjoint-Div-Sigma} directly is that now the coupling between the two equations is of lower order.
In particular, the observability can now be obtained directly by considering independently the observability inequality for the equation of $\sigma$, which is of transport type, and for the equation of $z$, which is of parabolic type, considering in both cases the coupling term as a source term. 
Let us emphasize that the quantity $q$ in \eqref{Def-q} can be seen as a version of the so-called effective viscous flux $\nu \div (u) - p(\rho)$, which has been used for the analysis of the Cauchy problem for compressible fluids \cite{Feireisl-Novotny-Petzeltova,LionsPL-Compressible}, but for the dual operator. \par
Now, let us again remark that as system \eqref{Navier-Stokes-Adjoint-Mu-Sigma} is linear, its observability is equivalent to a controllability statement for the adjoint equation written in the dual variables $(r, y)$, where the adjoint is taken with respect to the variables $(\sigma, q)$. This leads to the controllability problem:
\begin{equation} \label{Control-ReducedEq}
\left\{ \begin{array}{ll}
\ds  \partial_{t} r + \overline{u}  \cdot \nabla r + \frac{p'(\overline{\rho})\overline{\rho} }{\nu} r
=  f_r   - \frac{p'(\overline{\rho})\overline{\rho}^3}{\nu^2} y + v_r \chi_0, 
& \text{ in } (0,T) \times \T_L,  \smallskip\\
\ds  \frac{\overline{\rho}}{\nu} (\partial_{t} y + \overline{u} \cdot \nabla y) -  \Delta y -  \frac{p'(\overline{\rho})\overline{\rho}^2}{\nu^2} y
= f_y + \frac{p'(\overline{\rho})}{\nu} r + v_y \chi_0,  & \text{ in } (0,T) \times \T_L, \\
(r(0, \cdot), y(0,\cdot)) = (r_0, y_0) & \text{ in } \T_L, \\
(r(T, \cdot), y(T,\cdot)) = (0,0) & \text{ in } \T_L, 
\end{array} \right.
\end{equation}
where in order to add a margin on the control zone we introduce $\chi_0$ is a smooth cut-off function satisfying 
\begin{equation}
	\label{Def-Chi0}
	\hbox{Supp}\, \chi_0 \Subset \{ \chi = 1\} \hbox{ and } \chi_0(x) = 1 \hbox{ for all }x \in \T_L \hbox{ such that } d(x, \Omega) \geq 3 \varepsilon.
\end{equation}
Now in order to solve the controllability problem \eqref{Control-ReducedEq}, we use again another fixed point argument, and begin by considering the following decoupled controllability problem:
\begin{equation} \label{Control-Hyper-ReducedEq}
\left\{ \begin{array}{ll}
\ds  \partial_{t} r + \overline{u} \cdot \nabla r + \frac{p'(\overline{\rho})\overline{\rho} }{\nu} r =   \tilde f_r + v_r \chi_0, 
& \text{ in } (0,T) \times \T_L, \smallskip\\ 
\ds  \frac{\overline{\rho}}{\nu} \partial_{t} y  -  \Delta y 
= \tilde f_y + v_y \chi_0,  & \text{ in } (0,T) \times \T_L, \\
(r(0, \cdot), y(0,\cdot)) = (r_0, y_0) & \text{ in } \T_L, \\
(r(T, \cdot), y(T,\cdot)) = (0,0) & \text{ in } \T_L.
\end{array} \right.
\end{equation}
Getting suitable estimates on the controllability problem \eqref{Control-Hyper-ReducedEq} will allow us to solve the controllability problem \eqref{Control-ReducedEq} by a fixed point argument. 
Note that the control problem for \eqref{Control-Hyper-ReducedEq} simply consists of the control of two decoupled equations, the one in $r$ of transport type, the other one in $y$ of parabolic type. We are then reduced to these two classical problems. \par
It turns out that our main difficulty then will be to show the existence of smooth controlled trajectory for smooth source terms. Indeed, this is needed as we would like to consider velocity fields $u \in L^1(0,T; \mbox{Lip}(\T_L))$. As Carleman estimates are the basic tools to establish the controllability of parabolic equations and to estimate the regularity of controlled trajectories and since they are based on the Hilbert structures of the underlying functional spaces, it is therefore natural to try getting velocity fields 
\begin{equation*}
u \in L^2(0,T; H^3(\T_L)) \cap H^1(0,T; H^1(\T_L)) \cap C^0([0,T]; H^2(\T_L)). 
\end{equation*}
This regularity corresponds to the following regularity properties on the other functions:
\begin{itemize}
	\item $g_z \in L^2(0,T; H^{-3}(\T_L))$, $z \in L^2(0,T; H^{-1}(\T_L))$, 
	\item $q \in L^2(0,T; H^{-2}(\T_L))$, $\sigma \in L^2(0,T; H^{-2}(\T_L))$, $g_\sigma \in L^2(0,T;H^{-2}(\T_L))$,
	\item $f_r, f_y, \tilde f_r, \tilde f_y \in L^2(0,T; H^2(\T_L))$, $r \in L^2(0,T; H^2(\T_L))$, $y \in L^2(0,T;H^4(\T_L))$.
\end{itemize}

%
%
%
%
%
%
%
\subsection{Construction of the weight function} %
The controllability and observability properties of the systems described in Section \ref{Subsec:MainSteps} will be studied by using Carleman estimates.
These require the introduction of several notations, in particular to define the weight function.
We first construct a function $ \tilde \psi = \tilde \psi(t,x) \in C^2([0,T] \times \T_L)$ satisfying the following properties. \par
\ \par
\noindent
{\bf 1.} First, it is assumed that
\begin{equation} \label{Psi}
\forall (t,x) \in [0,T] \times \T_L,\ \   \tilde \psi(t,x)  \in [0,1].
\end{equation}
{\bf 2.} We assume that $\tilde \psi$ is constant along the characteristics of the target flow, i.e. $\tilde \psi$ solves
\begin{equation} \label{TransportPoids}
\partial_t \tilde \psi + \overline{u}\cdot \nabla \tilde \psi = 0 \hbox{ in } (0,T) \times \T_L, 
\end{equation}
or equivalently
\begin{equation} \label{TransportPoids2}
\tilde \psi(t,x) = \Psi(x - \overline{u} t) \ \text{ with } \ \Psi(\cdot):=\tilde{\psi}(0,\cdot).
\end{equation}
\noindent
{\bf 3.} We finally assume the existence of a subset $\omega \Subset \{ \chi_0 = 1\}$ such that
\begin{equation} \label{Ass-Psi}
\inf \left\{|\nabla \tilde \psi (t,x)|, \ (t,x) \in [0,T] \times (\T_L \setminus \omega) \right\} >0.
\end{equation}
The existence of a function $\tilde{\psi}$ satisfying those assumptions is easily obtained for $L$ large enough, e.g. $L =  \hbox{diam}( \Omega) + 5| \overline u| T$: it suffices to choose $\Psi$ taking values in $[0,1]$ and having its critical points in some $\omega_0$ such that $\mbox{dist}(\omega_0, \T_{L} \setminus \mbox{Supp\,} \chi_0) \geq 2 |\overline{u}| T$ and then to propagate $\tilde{\psi}$ with \eqref{TransportPoids2}. This leaves room to define $\omega$. \par
Now once $\tilde{\psi}$ is set, we define
\begin{equation} \label{PsiGamma}
\psi(t,x):=  \tilde \psi(t,x) + 6.
\end{equation}
Next we pick $T_0>0,\ T_1>0$ and $\varepsilon >0$ small enough so that
\begin{equation} \label{Choice-t-0-t-1-eps}
	  T_0+2 T_1 < T - \frac{L_{0} + 12 \varepsilon}{|\overline{u}|}.
\end{equation}
Now for any $\alpha \geq 2$, we introduce the weight function in time $ \theta(t)$ defined by
\begin{equation} \label{ThetaMu}
\theta = \theta(t) \ \mbox{ such that} \
\left\{ \begin{array}{l}
\ds \forall t \in [0,T_0],\, \theta(t) = 1+ \left( 1- \frac{t}{T_0} \right)^\alpha, \smallskip \\
\ds \forall t \in [T_0, T- 2T_1], \, \theta(t) = 1, \smallskip \\
\ds \forall t \in [T-T_1,T), \,  \theta(t) = \frac{1}{T-t}, \smallskip \\
\ds \theta \hbox{ is increasing on } [T-2T_1, T-T_1], \smallskip \\
\ds \theta \in C^2([0,T)).
\end{array} \right.
\end{equation}
Then we consider the following weight function $\varphi = \varphi(t,x)$:
\begin{equation} \label{Phi}
\varphi(t,x)= \theta(t) \left(\lambda_{0} e^{12 \lambda_{0}}- \exp(\lambda_{0} \psi(t,x)) \right),
\end{equation}
where  $s,\, \lambda_{0}$ are positive parameters with $s\geq 1$, $\lambda_{0} \geq 1$ and $\alpha$ is chosen as
\begin{equation} \label{Def-mu}
\alpha = s \lambda_{0}^2 e^{2\lambda_{0}},
\end{equation}
which is always larger than $2$, thus being compatible with the condition $\theta \in C^2([0,T))$. Actually, in the sequel we will use that $s$ can be chosen large enough, but for what concerns $\lambda_{0}$, it can be fixed from the beginning as equal to the constant $\lambda_{0}$ obtained in Theorem~\ref{CarlemanThm-L2L2} below. \par
Also note that, due to the definition of $\psi$ in \eqref{PsiGamma}, to the condition \eqref{Psi} and to $\lambda_{0} \geq 1$, we have for all $(t,x) \in [0,T) \times \T_L$,
\begin{equation} \label{Phi-bounds}
	\frac{14}{15} \Phi(t) \leq \varphi(t,x) \leq \Phi(t),
\end{equation}
where
\begin{equation} \label{DefPhi}
\Phi(t) := \theta(t) \lambda_{0} e^{12 \lambda_{0}}.
\end{equation}
We emphasize that the weight functions $\theta$ and $\varphi$ depend on the parameters $s$ and $\lambda_0$ but we will omit these dependences in the following for simplicity of notations.
\smallskip
\\
\noindent{\bf Notations.} In the following, we will consider functional spaces depending on the time and space variables. This introduces heavy notations, that we will keep in the statements of the theorems, but that we shall simplify in the proof by omitting the time interval $(0,T)$ and the spatial domain $\T_L$ as soon as no confusion can occur. Thus, we will use the notations:
\begin{equation*}
	\norm{\cdot}_{L^2(L^2)} = \norm{\cdot}_{L^2(0,T; L^2(\T_L))}, \quad 	\norm{\cdot}_{L^2(H^2)} = \norm{\cdot}_{L^2(0,T; H^2(\T_L))}, 
\end{equation*}
and so on for the other functional spaces. Similarly, we will often denote by $\norm{\cdot}_{H^2}$ and $\norm{\cdot}_{H^3}$ the norms $\norm{\cdot}_{H^2(\T_L)},\, \norm{\cdot}_{H^3(\T_L)}.$
%
%
%
%
%
%
%
%
\section{The controllability problem (\ref{Control-ReducedEq})}\label{Sec-Control-Sous-System}
The goal of this section is to solve the controllability problem \eqref{Control-ReducedEq}:
\begin{theorem} \label{Thm-Main-Control-Reduced}
	Let $(\overline u,T, \varepsilon)$ be as in \eqref{Choice-t-0-t-1-eps}.
	\\
	Let $(r_0, y_0) \in L^2(\T_L) \times L^2(\T_L)$. There exist $C>0$ and $s_0\geq 1$ large enough such that 
	for all $s \geq s_0$, 
	if $f_r$ and $f_y$ satisfy the integrability conditions
	\begin{equation} \label{Conditions-f-L2}
		\norm{\theta^{-3/2}  f_r e^{s \varphi}}_{L^2(0,T;L^2(\T_L))}+ \norm{ \theta^{-3/2} f_y e^{s \varphi}}_{L^2(0,T; L^2(\T_L))} < \infty,
	\end{equation}
there exists a controlled trajectory $(r,y)$ solving \eqref{Control-ReducedEq} and satisfying the following estimate:
	\begin{multline} \label{Estimate-On-r-y}
	\norm{\theta^{-3/2}  r e^{s \varphi}}_{L^2(0,T;L^2(\T_L))} 
	+ s \norm{ y e^{s \varphi}}_{L^2(0,T;L^2(\T_L))} 
	+   \norm{\theta^{-1} \nabla y e^{s \varphi}}_{L^2(0,T;L^2(\T_L))} 
	\\
	+ \norm{\theta^{-3/2}  \chi_0 v_r e^{s \varphi}}_{L^2(0,T;L^2(\T_L))} 
	+ s^{-1/2} \norm{\theta^{-3/2}  \chi_0 v_y e^{s \varphi}}_{L^2(0,T;L^2(\T_L))} 
	\\
		\leq
	C \left( \norm{\theta^{-3/2}f_r e^{s \varphi}}_{L^2(0,T;L^2(\T_L))} + s^{-1/2} \norm{\theta^{-3/2} f_y e^{s \varphi}}_{L^2(0,T;L^2(\T_L))}	\right.
	\\
	\left.+ \norm{ r_0 e^{ s \varphi(0)}}_{L^2(\T_L)} +  \norm{ y_0 e^{ s \varphi(0)}}_{L^2(\T_L)}\right).
	\end{multline}
Besides, if $(r_0, y_0) \in H^2 (\T_L) \times H^3(\T_L)$, and $f_r$ and $f_y$ satisfy
\begin{equation}
	\label{Cond-Reg-f}
		f_r e^{s \Phi}\in L^2(0,T; H^2(\T_L)), \quad f_y e^{s \Phi} \in L^2(0,T; H^2(\T_L)), 
\end{equation}
we furthermore have the following estimate:
\begin{multline}
	\label{Est-reg-fix-point}
	 	 \norm{r e^{6 s \Phi/7}}_{L^2(0,T;H^2(\T_L))}+ \norm{y e^{6 s \Phi/7 }}_{L^2(0,T; H^4(\T_L))}
\\
	+ \norm{\chi_0 v_r e^{6 s \Phi/7}}_{L^2(0,T;H^2(\T_L))}+ \norm{\chi_0 v_y e^{6 s \Phi/7}}_{L^2(0,T; H^2(\T_L))}
\\	\leq 
	C \left(\norm{f_r e^{s \Phi}}_{L^2(0,T; H^2(\T_L))}+ \norm{f_y e^{s \Phi}}_{L^2(0,T; H^2(\T_L))} + \norm{ r_0 e^{ s \Phi(0)}}_{H^2(\T_L)} +  \norm{ y_0 e^{ s \Phi(0)}}_{H^3(\T_L)}\right), 
\end{multline}
for some constant $C$ independent of $s\geq s_0$.
\end{theorem}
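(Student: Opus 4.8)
The natural strategy is to decouple the transport and parabolic components of \eqref{Control-ReducedEq} and treat the coupling terms as source terms in a fixed-point argument. First I would establish the "decoupled" controllability statement for \eqref{Control-Hyper-ReducedEq}. For the parabolic equation in $y$, this relies on a Carleman estimate for the heat operator with the weight $e^{s\varphi}$ (the one alluded to in Theorem~\ref{CarlemanThm-L2L2}); by the standard duality argument à la Fursikov--Imanuvilov, the Carleman estimate yields a controlled trajectory $y$, together with a control $v_y\chi_0$, satisfying a weighted $L^2(L^2)$ bound with the right powers of $\theta$. For the transport equation in $r$, since $\overline u\neq 0$ and $T>L_0/|\overline u|$ (which is what condition \eqref{Choice-t-0-t-1-eps} guarantees, up to the margin $\varepsilon$), one can control $r$ to zero in time $T$ by an explicit construction: solve forward the uncontrolled equation from $r_0$, then use the characteristics reaching $\omega$ (where $\chi_0=1$) within the available time to absorb the solution, multiplying by a suitable cut-off in time adapted to the profile of $\theta$ near $t=0$ and $t=T$. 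The weight $\theta$ is designed precisely so that the transport control produces the factor $\theta^{-3/2}$ and the parabolic control the factor $s^{-1/2}\theta^{-3/2}$ appearing in \eqref{Estimate-On-r-y}. Then, reintroducing the zeroth-order coupling $\frac{p'(\overline\rho)}{\nu}r$ in the $y$-equation and $-\frac{p'(\overline\rho)\overline\rho^3}{\nu^2}y$ in the $r$-equation as source terms, one closes a contraction: the map $(\tilde f_r,\tilde f_y)\mapsto(f_r-\frac{p'(\overline\rho)\overline\rho^3}{\nu^2}y,\ f_y+\frac{p'(\overline\rho)}{\nu}r)$ is a contraction in the weighted space defined by the left-hand side of \eqref{Estimate-On-r-y}, because the $s\|ye^{s\varphi}\|$ and $\|\theta^{-3/2}re^{s\varphi}\|$ norms dominate the coupling contributions with a gain of a negative power of $s$ for $s$ large; the fixed point gives \eqref{Estimate-On-r-y}.

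For the regularity estimate \eqref{Est-reg-fix-point}, the plan is to upgrade the $L^2(L^2)$-controlled trajectory to one enjoying $L^2(H^2)\times L^2(H^4)$ bounds with the weaker exponential weight $e^{6s\Phi/7}$ (recall $\frac{14}{15}\Phi\le\varphi\le\Phi$, so $e^{6s\Phi/7}$ is controlled by $e^{s\varphi}$ with a margin, since $6/7<14/15$). The idea is bootstrapping: once $(r,y,v_r,v_y)$ from the first part are known to lie in the $L^2(L^2)$-weighted space, I would differentiate the equations in space, or equivalently apply the $H^2$- and $H^4$-parabolic regularity theory for the heat equation (elliptic regularity in space plus maximal regularity in time) to the $y$-equation, using that $f_y e^{s\Phi}\in L^2(H^2)$, $y_0 e^{s\Phi(0)}\in H^3$ and that the control $v_y\chi_0$ has the same localization; and I would use the transport regularity (commuting $\overline u\cdot\nabla$ with spatial derivatives, which is harmless since $\overline u$ is constant) for the $r$-equation, combined with the $H^2$-smoothness of $f_r e^{s\Phi}$, $r_0 e^{s\Phi(0)}\in H^2$. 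The shift from $\varphi$ to $\Phi$ in the exponent, and from $s\Phi$ on the data side to $6s\Phi/7$ on the solution side, is what provides the room to absorb the polynomial-in-$s$ and $\theta$-losses coming from differentiating the weights (each spatial or temporal derivative hitting $e^{s\varphi}$ costs a factor $s\theta$ or similar, and the ratio $\Phi/\varphi\le 15/14$ together with $6/7<14/15$ leaves a strictly positive exponential margin to absorb all such losses).

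Concretely, I would first prove the decoupled statement for \eqref{Control-Hyper-ReducedEq} as a lemma (two parts: parabolic via Carleman + duality, transport via characteristics), then do the fixed point to get \eqref{Estimate-On-r-y}, and finally, taking that controlled trajectory, run the bootstrap to get \eqref{Est-reg-fix-point}; the regularity part does not require redoing the controllability, only propagating smoothness through the already-constructed solution and control (using that the control is of the form $v\chi_0$ with $\chi_0$ smooth and supported away from $\Omega$, and that the Carleman-dual construction automatically yields controls as smooth as the dual variable, which in turn is smooth because the Carleman estimate can be applied to the space-differentiated adjoint system). A subtle point worth noting explicitly is that the adjoint system \eqref{Navier-Stokes-Adjoint-Mu-Sigma} is set in negative Sobolev spaces ($\sigma, q\in L^2(H^{-2})$), so the "controllability $\Leftrightarrow$ observability" duality must be phrased in the correct weighted negative-index spaces; I would make this precise by testing against the weight $e^{-s\varphi}$ and working in the Hilbert space $\{w:\ we^{-s\varphi}\theta^{3/2}\in L^2(L^2)\}$ and its dual.

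**Main obstacle.** The hard part is the regularity estimate \eqref{Est-reg-fix-point}: one must show the controlled trajectory can be taken smooth — in particular $y\in L^2(H^4)$, which is the regularity ultimately forcing $u\in L^2(H^3)$ and hence $u\in L^1(\mathrm{Lip})$ — with an explicit exponential weight, and this is delicate because the Carleman-based control is defined by duality and is not a priori smooth. The key technical device is to prove the Carleman estimate not just in $L^2(L^2)$ but for the spatially-differentiated adjoint equations (or, equivalently, in weighted $H^{-k}$ norms, in the spirit of the reference \cite{ImaYam} cited in the introduction), keeping track of how many powers of $s$ and $\theta$ each derivative costs and verifying that the gap between the weights $e^{s\Phi}$ (on data) and $e^{6s\Phi/7}$ (on solution) absorbs them all for $s$ large; getting the bookkeeping of these weighted negative-norm Carleman estimates right, and checking that the coupling terms in \eqref{Control-ReducedEq} remain subordinate in these stronger norms, is where the real work lies.
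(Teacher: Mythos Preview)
Your plan matches the paper's: decouple into the heat and transport control problems (Theorems~\ref{Thm-Est-Y-Carl-NormsL2} and~\ref{Thm-Trans-Control}), run a Banach contraction on $(r,y)$ in the norm on the left of \eqref{Estimate-On-r-y} by treating the coupling terms \emph{and} the convective term $\overline u\cdot\nabla y$ as sources (this is why the $\|\theta^{-1}\nabla y\,e^{s\varphi}\|$ term is needed in the norm), then bootstrap for \eqref{Est-reg-fix-point}. One correction: the regularity step here is simpler than your ``main obstacle'' paragraph suggests---no differentiated or negative-norm Carleman is used for this theorem; the paper simply applies ordinary maximal parabolic regularity to $y\,e^{6s\Phi/7}$ and to the dual minimizer $W\,e^{-106s\Phi/105}$ (Proposition~\ref{Prop-Reg-heat}) and differentiates the transport equation for $r$, while the $H^{-k}$ Carleman/duality machinery of \cite{ImaYam} that you anticipate enters only later, in Section~\ref{Sec-Control-NS-Linear}, when passing from \eqref{Control-ReducedEq} back to \eqref{Navier-Stokes-Linear}.
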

As explained in Section \ref{Sec-Strategy}, Theorem \ref{Thm-Main-Control-Reduced} will be proved by a fixed point theorem based on the understanding of the controllability problem  \eqref{Control-Hyper-ReducedEq}, which amounts to understand two independent controllability problems, one for the heat equation satisfied by $y$, the other one for the transport equation satisfied by~$r$.  \par
The section is then organized as follows. Firstly, we recall the controllability properties of the heat equation. Secondly, we explain how to exhibit a null-controlled trajectory for the transport equation. Thirdly, we explain how these constructions can be combined in order to get Theorem \ref{Thm-Main-Control-Reduced}.
%
%
%
%
%
%
%
\subsection{Controllability of the heat equation}\label{Subsec-heat}
In this paragraph we deal with the following controllability problem: given $y_0$ and $\tilde f_y$, find a control function $v_y$ such that the solution $y$ of 
\begin{equation} \label{Heat-control}
	\left\{
		\begin{array}{ll}
			\ds \frac{\overline{\rho}}{\nu} \partial_t  y - \Delta y = \tilde{f}_{y}  + v_y {\chi}_{0}, \quad & \hbox{ in }(0,T) \times \T_L,
			\\
			y(0,\cdot) = y_0, \quad & \hbox{ in } \T_L,
		\end{array}
	\right.
\end{equation}
satisfies
\begin{equation}\label{Null-Control-Req}
y(T,\cdot)=0,\quad\hbox{in }\T_L.
\end{equation}
\subsubsection{Results}
As it is done classically, the study of the controllability properties of \eqref{Heat-control} is based on the observability of the adjoint system, which is obtained with the following Carleman estimate:
\begin{theorem}[Theorem 2.5 in \cite{BEG}] \label{CarlemanThm-L2L2}
There exist constants $C_0>0$ and $s_0\geq 1$ and $\lambda_{0}\geq 1$ large enough such that for all smooth functions $w$ on $[0,T] \times \T_L$ and for all $s \geq s_0$,
\begin{multline} \label{CarlemanEst-L2L2}
		s^{3/2} \lambda_0^2 \norm{\xi^{3/2} w e^{- s \varphi}}_{L^2(0,T; L^2(\T_L))}
		+ 
		s^{1/2} \lambda_0 \norm{\xi^{1/2} \nabla w e^{- s \varphi}}_{L^2(0,T; L^2(\T_L))}
		+
		s \lambda_0^{3/2} e^{7 \lambda_0} \norm{ w(0) e^{-s \varphi(0)}}_{L^2(\T_L)}
	\\
		\leq
		C_0 \norm{\left(- \frac{\overline{\rho}}{\nu} \partial_t - \Delta\right) w e^{- s \varphi}}_{L^2(0,T; L^2(\T_L))}
		+
		C_0 s^{3/2} \lambda_0^2 \norm{\xi^{3/2} \chi_0 w e^{- s \varphi}}_{L^2(0,T; L^2(\T_L))}.
\end{multline}
where we have set
\begin{equation} \label{Xi}
	\xi(t,x) = \theta(t) \exp(\lambda_{0} \psi(t,x)) .
\end{equation}
\end{theorem}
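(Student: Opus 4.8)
\textbf{Proof strategy for Theorem~\ref{CarlemanThm-L2L2}.}

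The statement to establish is the global Carleman estimate \eqref{CarlemanEst-L2L2} for the heat operator $-\frac{\overline\rho}{\nu}\partial_t - \Delta$ on the torus $\T_L$, with the specific weight $\varphi = \theta(t)(\lambda_0 e^{12\lambda_0} - e^{\lambda_0\psi})$ built in \eqref{Phi}. Since this is quoted verbatim as Theorem~2.5 of \cite{BEG}, the cleanest route is simply to cite that reference; but for a self-contained plan I would follow the classical Fursikov--Imanuvilov machinery \cite{FursikovImanuvilov}, carefully tracking the two features that distinguish the present setting: (i) the weight travels at velocity $\overline u$ because $\psi$ solves the transport equation \eqref{TransportPoids}, and (ii) the claimed inequality is ``one-sided'' in time --- the singularity of $\theta$ is only at $t=T$ (see \eqref{ThetaMu}), and there is a boundary term $s\lambda_0^{3/2}e^{7\lambda_0}\|w(0)e^{-s\varphi(0)}\|_{L^2}$ on the left, reflecting that the weight does not blow up at $t=0$.

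The plan is as follows. First I would set $w^* = e^{-s\varphi} w$ and compute the conjugated operator $e^{-s\varphi}\big(-\frac{\overline\rho}{\nu}\partial_t - \Delta\big)(e^{s\varphi} w^*)$, splitting it into a ``symmetric'' part $P_1 w^*$ and a ``skew-symmetric'' part $P_2 w^*$ plus a remainder $Rw^*$, exactly as in \cite{FursikovImanuvilov}; one squares the identity $P_1 w^* + P_2 w^* = e^{-s\varphi}g w^* + Rw^*$ in $L^2((0,T)\times\T_L)$ (writing $g$ for $(-\frac{\overline\rho}{\nu}\partial_t - \Delta)w$) and expands the double product $2\langle P_1 w^*, P_2 w^*\rangle$. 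Here I would use crucially that $\psi$ is transported by $\overline u$: the terms coming from $\partial_t\varphi$ combine with $\overline u\cdot\nabla\varphi$ in a way governed by $\theta'/\theta$ and by $\lambda_0|\nabla\psi|^2 e^{\lambda_0\psi}$, and on the region $\T_L\setminus\omega$ the lower bound \eqref{Ass-Psi}, $|\nabla\psi| = |\nabla\tilde\psi| \geq c_0 > 0$, guarantees the positivity of the dominant term $s^3\lambda_0^4\xi^3$ after integration by parts. This is the heart of the argument and, as usual for Carleman estimates, is the main obstacle: one must show all cross terms are absorbed, choosing $\lambda_0$ large first (to beat $\lambda_0$-polynomial losses by the exponential $e^{\lambda_0\psi}$ gap guaranteed by $\psi\in[6,7]$), then $s$ large (to beat lower-order terms in $s$). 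The fact that $\alpha = s\lambda_0^2 e^{2\lambda_0}\geq 2$ in \eqref{Def-mu} ensures $\theta\in C^2$, so that the time-derivative terms near $t=0$ and on $[T-2T_1,T-T_1]$ are well-behaved; one checks $|\theta'|\lesssim \theta^{1+1/\alpha}$ and similarly for $\theta''$, so these contributions are lower order.

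Having obtained the interior estimate with the right-hand side containing $\|\xi^{3/2}w^*\|^2$ localized on a slightly larger set, I would absorb that localized term onto the left by the standard trick: introduce a cutoff equal to $1$ on a neighbourhood of $\T_L\setminus\omega$ and supported in $\{\chi_0 = 1\}$ (which is possible by the inclusion $\omega\Subset\{\chi_0 = 1\}$ imposed in assumption \textbf{3}), apply the energy estimate for the heat equation to $\eta w^*$ on that set, and use Young's inequality to move everything except a $\chi_0$-localized $L^2$ term to the left; on $\omega$ itself the weight $\xi$ is bounded, which lets one control $\|\xi^{3/2}w^*\|_{L^2(\omega)}$ crudely. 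Finally, to produce the boundary term $s\lambda_0^{3/2}e^{7\lambda_0}\|w(0)e^{-s\varphi(0)}\|_{L^2(\T_L)}$ at $t=0$, I would multiply the equation by $\theta^{-1}e^{-2s\varphi}w$ (or exploit the identity already generated by $\partial_t$ in the $P_1$--$P_2$ cross term, evaluated at $t=0$, where $\theta(0) = 1 + 1 = 2$ is finite and $\varphi(0)$ is smooth), integrate on $(0,T_0)\times\T_L$ where $\theta$ is bounded below and above, and bound the resulting $w(0)$-term from below by the already-controlled interior quantities, picking up the stated power of $s$ and factor $e^{7\lambda_0}$ from the explicit value of $\varphi(0)$ on the support of $\psi$. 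Collecting the three pieces --- the $s^{3/2}\lambda_0^2\|\xi^{3/2}w^*\|$, the $s^{1/2}\lambda_0\|\xi^{1/2}\nabla w^*\|$, and the $t=0$ boundary term --- and renaming constants gives \eqref{CarlemanEst-L2L2}.
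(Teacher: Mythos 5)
The paper does not prove this statement --- it is quoted directly from \cite{BEG} (Theorem~2.5 there), with no internal proof --- and you correctly note this at the outset. Your sketch of the underlying Fursikov--Imanuvilov argument (conjugation $w^*=e^{-s\varphi}w$, the symmetric/skew-symmetric split $P_1+P_2$, positivity of the dominant $s^3\lambda_0^4\xi^3$ term on $\T_L\setminus\omega$ via the non-degeneracy assumption \eqref{Ass-Psi}, large-$\lambda_0$ then large-$s$ absorption, and the $t=0$ boundary term arising from the integration by parts in time, the $t=T$ contribution vanishing because the one-sided singularity of $\theta$ forces $e^{-s\varphi}\to 0$ there) is the correct outline and is consistent with the methodology used in \cite{BEG}; since the present paper simply cites that reference, there is no internal proof to compare against and your proposal serves as a reasonable self-contained substitute.
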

Using Theorem \ref{CarlemanThm-L2L2} and the remark that for some constant $C\geq 1$ independent of $s$, 
$$
	\frac{\theta(t)}{C} \leq \xi(t,x) \leq C \theta(t), \quad \text{for all } (t,x) \in [0,T) \times \T_L, 
$$
we obtain the following controllability result:
\begin{theorem}[Inspired by Theorem 2.6 in \cite{BEG}] \label{Thm-Est-Y-Carl-NormsL2}
There exist positive constants $C>0$ and $s_0 \geq 1$ such that for all $s \geq s_0$, for all $\tilde{f}_{y}$ satisfying
\begin{equation}
	\label{Cond-hat-f-y}
	\norm{ \theta^{-3/2} \tilde f_y e^{s \varphi}}_{L^2(0,T; L^2(\T_L))} < \infty
\end{equation}
 and $y_0 \in L^2(\T_L)$, there exists a solution $(y,v_y)$ of the control problem \eqref{Heat-control}--\eqref{Null-Control-Req} which furthermore satisfies the following estimate:
	\begin{multline} \label{Est-Y-L2-H2}
		s^{3/2}  \norm{y e^{ s \varphi}}_{L^2(0,T; L^2(\T_L))}
		+
		\norm{\theta^{-3/2} \chi_0 v_y e^{ s \varphi}}_{L^2(0,T; L^2(\T_L))}
		+
		s^{1/2}  \norm{\theta^{-1} \nabla y e^{ s \varphi}}_{L^2(0,T; L^2(\T_L))}
		\\
		\leq 
		C \norm{\theta^{-3/2} \tilde f_y e^{ s \varphi}}_{L^2(0,T; L^2(\T_L))}
		+
		C s^{1/2} \norm{y_0 e^{s \varphi(0)}}_{L^2(\T_L)}.
	\end{multline}
	Besides, this solution $(y,v_y)$ can be obtained through a linear operator in $(y_0, \tilde f_y)$.
	\\
	If $y_0 \in H^1(\T_L)$, we also have 
	\begin{multline}
		\label{Est-Y-L2H2-Init-H1}
		s^{-1/2} \norm{\theta^{-2} \nabla^2 y e^{ s \varphi}}_{L^2(0,T; L^2(\T_L))}
		\leq
		C \norm{\theta^{-3/2} \tilde f_y e^{ s \varphi}}_{L^2(0,T; L^2(\T_L))}
		\\
		+
		C s^{1/2} \norm{y_0 e^{s \varphi(0)}}_{L^2(\T_L)} + C s^{-1/2} \norm{\nabla y_0 e^{s \varphi(0)}}_{L^2(\T_L)}.
	\end{multline}
\end{theorem}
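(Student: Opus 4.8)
\textbf{Proof proposal for Theorem \ref{Thm-Est-Y-Carl-NormsL2}.}

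The plan is to deduce the controllability result from the Carleman estimate \eqref{CarlemanEst-L2L2} via the standard penalization (or Fursikov--Imanuvilov duality) argument, paying attention to the precise weights that appear. First I would rewrite the Carleman estimate in terms of the time weight $\theta$ alone, using the two-sided bound $\theta(t)/C \le \xi(t,x) \le C\theta(t)$ and \eqref{Phi-bounds}, so that \eqref{CarlemanEst-L2L2} becomes an observability inequality for the adjoint heat equation $-\frac{\overline\rho}{\nu}\partial_t w - \Delta w = 0$ of the form
\begin{equation*}
	s^{3/2}\norm{\theta^{3/2} w e^{-s\varphi}}_{L^2(L^2)} + s^{1/2}\norm{\theta^{1/2}\nabla w e^{-s\varphi}}_{L^2(L^2)} + s\norm{w(0) e^{-s\varphi(0)}}_{L^2} \le C\, s^{3/2}\norm{\theta^{3/2}\chi_0 w e^{-s\varphi}}_{L^2(L^2)}.
\end{equation*}
Then, following the by-now classical scheme, I would minimize over adjoint states $w(T)$ the quadratic functional
\begin{equation*}
	J(w_T) = \frac{1}{2}\int_0^T\!\!\int_{\T_L} \theta^{3}\chi_0^2 |w|^2 e^{-2s\varphi}\,dx\,dt + \int_0^T\!\!\int_{\T_L}\tilde f_y\, w\,dx\,dt + \int_{\T_L} y_0\, w(0)\,dx,
\end{equation*}
whose coercivity is exactly guaranteed by the observability inequality above together with the integrability hypothesis \eqref{Cond-hat-f-y} (used to control the term $\int \tilde f_y w$ by Cauchy--Schwarz after inserting $\theta^{-3/2}e^{s\varphi}\cdot\theta^{3/2}e^{-s\varphi}$). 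The minimizer $\widehat w$ then yields the control $v_y = -\theta^{3}\chi_0 \widehat w e^{-2s\varphi}$ and the associated state $y$, and the Euler--Lagrange equation combined with the bound $J(\widehat w)\le 0$ at the minimizer gives precisely the weighted estimate \eqref{Est-Y-L2-H2}; linearity of the construction in $(y_0,\tilde f_y)$ is automatic since $\widehat w$ depends linearly on the data.

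The second, higher-regularity estimate \eqref{Est-Y-L2H2-Init-H1} I would obtain by a bootstrap on the equation satisfied by $y$ itself (not its adjoint). Set $\zeta = y e^{s\varphi}$, or rather work with a cut power of $\theta$ so that the weight is admissible near $t=T$; then $\zeta$ solves a heat equation with right-hand side involving $\tilde f_y e^{s\varphi}$, $v_y\chi_0 e^{s\varphi}$, and lower-order terms coming from $\partial_t\varphi$ and $\nabla\varphi$, all of which are already controlled in $L^2(L^2)$ by the first part of the theorem (with the appropriate powers of $s$ and $\theta$, using $|\partial_t\varphi|\lesssim \theta^2$ and $|\nabla\varphi|\lesssim\theta$). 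Applying maximal $L^2$-regularity for the heat equation on the torus and keeping track of the powers of $s$ — each spatial derivative costs a factor $\theta$ and a factor $s^{1/2}$, consistently with the scaling already visible in \eqref{Est-Y-L2-H2} — then produces the bound on $s^{-1/2}\norm{\theta^{-2}\nabla^2 y e^{s\varphi}}_{L^2(L^2)}$ in terms of the data plus the extra initial term $s^{-1/2}\norm{\nabla y_0 e^{s\varphi(0)}}_{L^2}$, which enters because the parabolic regularization only gives $H^2$ in space up to the regularity of the initial datum.

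The main obstacle, as usual in this circle of ideas, is bookkeeping the weights rather than any conceptual difficulty: one must check that every weight appearing — $\theta^{-3/2}e^{s\varphi}$ on the source, $\theta^{3}\chi_0^2 e^{-2s\varphi}$ in $J$, $e^{s\varphi(0)}$ on the initial data — is mutually consistent so that the duality pairing closes, and in particular that the singularity of $\theta$ as $t\to T$ (where $\theta(t)=1/(T-t)$) is absorbed by $e^{-2s\varphi}$, which decays like $e^{-cs/(T-t)}$ and hence beats any polynomial in $\theta$; this is what forces the specific power $\theta^{3/2}$ in \eqref{Cond-hat-f-y} and \eqref{Est-Y-L2-H2}. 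A secondary technical point is that the Carleman weight $\varphi$ is not smooth at $t=T$, so the minimization argument must be run on a functional penalized away from $T$ (or with $\theta$ truncated) and then passed to the limit; since this is handled in \cite{BEG}, whose Theorem 2.6 is the stated model, I would simply invoke that reference for the passage to the limit and concentrate the exposition on the modifications needed to reach the $\nabla^2 y$ estimate \eqref{Est-Y-L2H2-Init-H1}, which is the genuinely new ingredient here compared to \cite{BEG}.
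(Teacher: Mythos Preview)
Your overall strategy is right, but the functional you write down is not the one that delivers the weighted estimate on $y$ itself. You minimize
\[
J(w_T)=\tfrac12\int_0^T\!\!\int_{\T_L}\theta^{3}\chi_0^2|w|^2e^{-2s\varphi}+\int_0^T\!\!\int_{\T_L}\tilde f_y\,w+\int_{\T_L}y_0\,w(0)
\]
over terminal data $w_T$ of the \emph{homogeneous} adjoint equation. That is the HUM functional: its Euler--Lagrange equation and the bound $J(\widehat w)\le 0$ give you null controllability and the weighted bound on $v_y$, but they do \emph{not} give $s^{3/2}\norm{y e^{s\varphi}}_{L^2(L^2)}$ --- the state $y$ simply does not appear in your $J$. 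The paper instead uses the Fursikov--Imanuvilov functional
\[
J(w)=\tfrac12\int_0^T\!\!\int_{\T_L}\Big|(-\tfrac{\overline\rho}{\nu}\partial_t-\Delta)w\Big|^2e^{-2s\varphi}
+\tfrac{s^3}{2}\int_0^T\!\!\int_{\T_L}\chi_0^2\theta^3|w|^2e^{-2s\varphi}
-\int_0^T\!\!\int_{\T_L}\tilde f_y\,w+\int_{\T_L}y_0\,w(0),
\]
minimized over the completion $H_{obs}$ of smooth functions under the norm built from the first two terms (so $w$ is \emph{not} required to solve anything). The Carleman estimate makes this a norm; at the minimizer $W$ one \emph{defines}
\[
y=e^{-2s\varphi}\Big(-\tfrac{\overline\rho}{\nu}\partial_t-\Delta\Big)W,\qquad v_y=-s^3\theta^3\chi_0 W e^{-2s\varphi},
\]
and then $J(W)\le 0$ gives both $\norm{\theta^{-3/2}\chi_0 v_y e^{s\varphi}}_{L^2(L^2)}$ and $s^{3/2}\norm{y e^{s\varphi}}_{L^2(L^2)}$ in one stroke. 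This is exactly the point of the FI construction over the HUM one, and it is what you are missing when you claim \eqref{Est-Y-L2-H2} follows from your Euler--Lagrange equation.

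Your treatment of \eqref{Est-Y-L2H2-Init-H1} via maximal regularity for $y e^{s\varphi}$ (or a suitable $\theta$-modified weight) is fine and matches the paper's indication that the remaining estimates come from weighted energy/regularity arguments as in \cite{BEG}. The only correction needed is the functional in the first step.
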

The proof of Theorem \ref{Thm-Est-Y-Carl-NormsL2} is done in \cite{BEG} for an initial data $y_0 = 0$. We shall therefore not provide extensive details for its proof, but only explain how it should be adapted to the case $y_0 \neq 0$, see the proof in Section \ref{Subsubsec-proof}.
\par
We now explain what can be done when the source term $\tilde f_y$ is more regular and lies in $L^2(0,T; H^1(\T_L))$ or in $L^2(0,T; H^2(\T_L))$.
\begin{proposition}
	\label{Prop-Reg-heat}
	Consider the controlled trajectory $(y,v_y)$ constructed in Theorem \ref{Thm-Est-Y-Carl-NormsL2}. Then for some constant $C >0 $ independent of $s$, we have the following properties:
	\begin{enumerate}
		\item $v_y \in L^2(0,T; H^2(\T_L))$ and 
		$$
			\norm{\chi_0 v_y e^{ 6 s \Phi/7}}_{L^2(0,T; H^2(\T_L))} \leq C \left( \norm{\theta^{-3/2} \tilde f_y e^{s \varphi}}_{L^2(0,T; L^2(\T_L))}+  \norm{ y_0 e^{ s \Phi(0)}}_{L^2(\T_L)}\right).
		$$
		\item If $y_0 \in H^2(\T_L)$ and $\tilde f_y e^{6 s \Phi/7} \in L^2(0,T; H^1(\T_L)),\, \theta^{-3/2} \tilde f_y e^{s \varphi} \in L^2(0,T; L^2(\T_L))$, 
		\begin{multline*}
			\norm{y e^{6 s \Phi/7}}_{L^2(0,T; H^3(\T_L))} \leq  C \left( \norm{\tilde f_y e^{6 s \Phi/7}}_{L^2(0,T; H^1(\T_L))} \right.
			\\
			\left.
			+ \norm{\theta^{-3/2} \tilde f_y e^{s \varphi}}_{L^2(0,T; L^2(\T_L))} +  \norm{ y_0 e^{ s \Phi(0)}}_{H^2(\T_L)}\right). 
		\end{multline*}
		\item If $y_0 \in H^3(\T_L)$ and $\tilde f_y e^{6 s \Phi/7} \in L^2(0,T;H^2(\T_L)),\,  \theta^{-3/2} \tilde f_y e^{s \varphi} \in L^2(0,T; L^2(\T_L))$, 
		\begin{multline*}
			\norm{y e^{6 s \Phi/7}}_{L^2(0,T; H^4(\T_L))} \leq  C \left( \norm{\tilde f_y e^{6 s \Phi/7}}_{L^2(0,T; H^2(\T_L))}\right.
			\\
			\left.+\norm{\theta^{-3/2} \tilde f_y e^{s \varphi}}_{L^2(0,T; L^2(\T_L))}+ \norm{ y_0 e^{ s \Phi(0)}}_{H^3(\T_L)}\right). 
		\end{multline*}
	\end{enumerate}
\end{proposition}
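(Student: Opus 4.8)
\textbf{Plan of proof for Proposition \ref{Prop-Reg-heat}.}

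The strategy is to bootstrap regularity on the controlled trajectory $(y,v_y)$ produced by Theorem \ref{Thm-Est-Y-Carl-NormsL2}, exploiting the smoothing effect of the heat operator together with the fact that the control $v_y$ is supported away from $\Omega$ in the region $\{\chi_0 = 1\}$ and is obtained through a linear operator in $(y_0,\tilde f_y)$. First, for item (1), I would observe that $\chi_0 v_y$ is itself the solution of a parabolic problem: applying $\chi_0$-localization to \eqref{Heat-control} is not quite the point, rather one uses that $v_y$ in the construction of \cite{BEG} is built as a penalized/HUM-type control which can be written as $v_y = -\Delta \phi + (\overline\rho/\nu)\partial_t \phi$ (up to constants) for an adjoint state $\phi$ solving the backward heat equation with weighted source; hence $v_y$ inherits from $\phi$ the weighted regularity coming from the Carleman estimate. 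More concretely, since the Carleman weight $e^{-s\varphi}$ blows up like $e^{-s/(T-t)}$ near $t = T$, multiplication by $e^{6s\Phi/7}$ (which behaves like $e^{6s/(7(T-t))}$, growing strictly slower than $e^{s\varphi}$ decays since $\varphi \geq \frac{14}{15}\Phi$ by \eqref{Phi-bounds} and $\frac{14}{15} > \frac67$) leaves a margin that absorbs the finitely many derivatives lost, so that $\chi_0 v_y e^{6s\Phi/7} \in L^2(0,T;H^2)$ with the claimed bound. The essential mechanism is: spare exponential decay $\Rightarrow$ arbitrary Sobolev regularity in space, uniformly in $s$.

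For items (2) and (3), once $v_y$ (hence the full right-hand side $\tilde f_y + v_y\chi_0$ of \eqref{Heat-control}) is controlled in the appropriate weighted $H^1$, resp. $H^2$, space against the weight $e^{6s\Phi/7}$, I would run a standard parabolic regularity argument on the equation $\frac{\overline\rho}{\nu}\partial_t y - \Delta y = \tilde f_y + v_y\chi_0$ with the time-dependent weight $e^{6s\Phi/7}$. The point is that $t \mapsto e^{6s\Phi(t)/7}$ is smooth on $[0,T)$ and, although it is unbounded near $T$, when one differentiates the weighted equation in space the weight passes through untouched (it depends only on $t$), and when one performs energy estimates the time-derivative of the weight produces a term $\partial_t(e^{6s\Phi/7}) = \frac{6s}{7}\Phi' e^{6s\Phi/7}$ which has a \emph{good} sign on $[T-T_1,T)$ (since $\Phi$ is increasing there, so $\Phi' \geq 0$, and it appears with the dissipative structure) — this is the usual trick making weighted parabolic estimates work up to the blow-up time. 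On $[0,T-2T_1]$ the weight is bounded so there is nothing to do. Thus: multiply \eqref{Heat-control} by $e^{6s\Phi/7}$, commute derivatives, and use the a priori estimates $\|y e^{s\varphi}\|_{L^2(L^2)}$, $\|\nabla y e^{s\varphi}\|$, $\|\nabla^2 y e^{s\varphi}\|$ from \eqref{Est-Y-L2-H2}--\eqref{Est-Y-L2H2-Init-H1} (again using $\varphi \geq \frac{14}{15}\Phi > \frac67\Phi$ to convert the $e^{s\varphi}$-weighted bounds into the milder $e^{6s\Phi/7}$-weighted ones) to control the lower-order terms, gaining two derivatives in space per level of regularity of the source; the initial data contribute $\|y_0 e^{s\Phi(0)}\|_{H^2}$ resp.\ $\|y_0 e^{s\Phi(0)}\|_{H^3}$ via the trace/maximal-regularity estimate. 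The two $H^1$-type hypotheses on $\tilde f_y$ (one weighted by $e^{6s\Phi/7}$ in $H^1$ or $H^2$, the other by $e^{s\varphi}$ in $L^2$) are exactly what is needed: the first feeds the maximal parabolic regularity, the second feeds the lower-order terms via the already-established $L^2$-weighted estimate.

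The main obstacle is item (1): establishing that the specific control $v_y$ constructed in \cite{BEG} — not merely \emph{some} control — enjoys the weighted $H^2$ bound. This requires going back into the structure of that construction (a minimization of a weighted functional, whose minimizer yields $v_y$ as a weighted Laplacian-plus-time-derivative of the adjoint state) and checking that the adjoint state, solution of a backward heat equation with a right-hand side that is itself $L^2$ against a Carleman weight, has enough weighted regularity; since the backward heat equation regularizes in space for $t < T$ and the weight $e^{6s\Phi/7}$ provides the exponential slack near $t=T$, two extra spatial derivatives are available, but one must track that all constants are uniform in $s \geq s_0$. Once (1) is secured, (2) and (3) are routine parabolic bootstrapping with time-weighted energy estimates, and I would only sketch them, pointing to the good sign of $\Phi'$ near $T$ and to \eqref{Phi-bounds} as the two facts that make everything go through.
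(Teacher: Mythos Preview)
Your overall plan matches the paper's proof closely, but there is one concrete error and one unnecessary complication worth flagging.

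For item (1), you write that the control is of the form $v_y = -\Delta\phi + (\overline\rho/\nu)\partial_t\phi$ for the adjoint minimizer $\phi$. That formula is for the \emph{state} $y$, not the control: in the construction recalled in \eqref{Link-W-Y} one has $y = e^{-2s\varphi}(-\tfrac{\overline\rho}{\nu}\partial_t - \Delta)W$, whereas $v_y = -s^3\theta^3\chi_0 W e^{-2s\varphi}$ is simply a weighted multiple of the minimizer itself. This matters because the regularity of $v_y$ then comes \emph{directly} from the $H^2$ regularity of (a reweighted) $W$, not through its image under the heat operator. The paper implements exactly the mechanism you describe---backward parabolic regularization plus exponential slack---by setting $W_* = We^{-106s\Phi/105}$, writing the backward heat equation for $W_*$ with $L^2(L^2)$ right-hand side, and invoking maximal regularity to get $W_* \in L^2(H^2)$; the exponent $106/105$ is chosen precisely so that $6\Phi/7 - 2\varphi \leq -106\Phi/105$ via \eqref{Phi-bounds}. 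Your ``spare exponential decay $\Rightarrow$ Sobolev regularity'' slogan is the right idea, just applied to the wrong formula.

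For items (2) and (3), the paper does exactly what you propose---set $y_* = ye^{6s\Phi/7}$, write the heat equation for $y_*$, and bootstrap---but it invokes maximal parabolic regularity directly rather than energy estimates, so the sign of $\Phi'$ plays no role. The commutator term $\tfrac{6s}{7}\partial_t\Phi\, e^{6s\Phi/7}y$ is simply estimated in $L^2(H^k)$ using the already-established bounds \eqref{Est-Y-L2-H2}--\eqref{Est-Y-L2H2-Init-H1} together with the exponential margin $14/15 > 6/7$; no sign argument is needed. Your route via the good sign would also work but is more laborious.
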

The proof is done below in Section \ref{Subsubsec-proof} and is mainly based on regularity results.
\subsubsection{Proofs}\label{Subsubsec-proof}
\begin{proof}[Sketch of the proof of Theorem \ref{Thm-Est-Y-Carl-NormsL2}]
	For later purpose, let us briefly present how the proof of Theorem~\ref{Thm-Est-Y-Carl-NormsL2} works. It mainly consists in introducing the functional 
	\begin{multline}
		\label{Func-J}
		J(w) =  \frac{1}{2} \int_0^T \int_{\T_L} |(-  \frac{\overline{\rho}}{\nu} \partial_t - \Delta) w|^2 e^{-2 s \varphi} 
			+  \frac{s^3}{2}\int_0^T \int_{\T_L}  \chi_{0}^2 \theta^3 |w|^2 e^{-2 s \varphi}
			\\
			- \int_0^T \int_{\T_L} \tilde f_y w + \int_{\T_L} w(0,\cdot) y_0(\cdot), 
	\end{multline}
	considered on the set
	\begin{equation}
		\label{CarlemanSpace-Xobs}
		H_{obs} = \overline{\{ w \in C^\infty([0,T] \times \T_L) \}}^{\norm{\cdot }_{obs}}.
	\end{equation}
Here the overline refers to the completion with respect to the Hilbert norm $\norm{\cdot}_{obs}$ defined by 
	\begin{equation}
		\label{CarlemanNorm-Xobs}
		\norm{w}_{obs}^2 =  
			\int_0^T \int_{\T_L} |(- \frac{\overline{\rho}}{\nu} \partial_t - \Delta) w|^2 e^{-2 s \varphi} 
			+  s^3 \int_0^T \int_{\T_L} \chi_{0}^2 \theta^3 |w|^2 e^{-2 s \varphi}.
	\end{equation}

	Thanks to the Carleman estimate \eqref{CarlemanEst-L2L2}, $\norm{\cdot}_{obs}$ is a norm. The assumptions $y_0 \in L^2$ and \eqref{Cond-hat-f-y}
 imply that $J$ is well-defined, convex and coercive on $H_{obs}$. Therefore it has a unique minimizer $W$ in $H_{obs}$ and the couple $(y,v_y)$ given by 
	\begin{equation}
		\label{Link-W-Y}
		y = e^{-2s \varphi} (- \frac{\overline{\rho}}{\nu} \partial_t - \Delta) W, \qquad v_y = - s^3 \theta^3 \chi_{0} W e^{-2s \varphi}  
	\end{equation}
	solves the controllability problem \eqref{Heat-control}--\eqref{Null-Control-Req}. Using the coercivity of $J$ immediately yields $L^2(L^2)$ estimates on $y$ and $v_y$ and on $\norm{W}_{obs}$ by using $J(W) \leq J(0) = 0$:
	\begin{multline}
		s^3 \int_0^T \int_{\T_L}  |y|^2 e^{2 s \varphi}+ \int_0^T \int_{\T_L} \theta^{-3} |v_y|^2 e^{2 s \varphi} 
		 =  
		s^3 \norm{W}_{obs}^2 
		\\
		 \leq  C s^3 
			\left( 
				\frac{1}{s^3 } \int_0^T \int_{\T_L} \theta^{-3} |\tilde f_y|^2 e^{2 s \varphi}  
				+
				 \frac{1}{s^2 } \int_{\T_L}  |y_0|^2 e^{2 s \varphi(0)}
			 \right). 
			 \label{Estimate-W-obs}
	\end{multline}
	 The other estimates on $y$ are derived by  weighted energy estimates similar to the ones developed in \cite[Theorem 2.6]{BEG}, the only difference coming from the integrations by parts in time leading to new terms involving $y_0$. Details of the proof are left to the reader.
\end{proof}
\begin{proof}[Proof of Proposition \ref{Prop-Reg-heat}]
	\emph{Item 1.} The control $v_y$ is given by \eqref{Link-W-Y} with $W \in X_{obs}$ with an estimate on $\norm{W}_{obs}$ given by \eqref{Estimate-W-obs}. Therefore, $v_y e^{6 s \Phi/7} = s^3 \theta_3 \chi_0 W e^{6 s \Phi/7 - 2 s \varphi}$. We look at the equation satisfied by $W_* = e^{-106 s \Phi /105 } W$:
	$$
		(- \frac{\overline{\rho}}{\nu} \partial_t - \Delta) W_* = e^{-106 s \Phi/105 } (- \frac{\overline{\rho}}{\nu} \partial_t - \Delta) W + \frac{106}{105} s\frac{\overline{\rho}}{\nu} \partial_t \Phi e^{-106 s \Phi/105 } W, 
	$$
	and $W_*(T) = 0$ (in $\mathscr{D}'(\T_L)$). Using \eqref{Estimate-W-obs} and \eqref{CarlemanEst-L2L2}, we get an $L^2(L^2)$ bound on the right hand-side since $\varphi \leq \Phi$. Maximal regularity estimates then yield $W_* = W e^{-106 s \Phi/105} \in L^2(H^2)$. From $6 \Phi/7- 2\varphi \leq - 106\Phi/105$, see \eqref{Phi-bounds}, we thus get the claimed estimates.
	\\
	\emph{Items 2 and 3.} Let us give some partial details on the proof of item 2. We set $y_* = y e^{6 s \Phi/7}$. It solves:
	$$
		\frac{\overline{\rho}}{\nu} \partial_t y_*  - \Delta y_* = e^{6 s \Phi/7} (\tilde f_y + v_y) + \frac{6}{7} s\frac{\overline{\rho}}{\nu} \partial_t \Phi e^{6 s \Phi/7 } y. 
	$$
	We then simply use classical parabolic regularity estimates for $y_*$, item 1 and \eqref{Est-Y-L2-H2}--\eqref{Est-Y-L2H2-Init-H1}. 
\end{proof}
%
%
%
%
\subsection{Controllability of the transport equation}
We study the following control problem: Given $\tilde f_r$ and $r_0$, find a control function $v_r$ such that the solution $r$ of
\begin{equation} \label{Eq-Trans-For-r}
\left\{
	\begin{array}{ll}
		\ds \partial_t r + \overline{u} \cdot \nabla r + \frac{p'(\overline{\rho}) \overline{\rho}}{\nu} r = \tilde f_r+ v_r \chi_0, & \text{ in } (0,T) \times \T_L, 
		\\
		r(0,\cdot) = r_0, &  \text{ in } \T_L, 
	\end{array}
\right. 
\end{equation}
satisfies the controllability requirement
\begin{equation} \label{r-T}
r(T, \cdot ) = 0 \text{ in } \T_L.
\end{equation}
We show the following existence result:
\begin{theorem} \label{Thm-Trans-Control}
	Let $(\overline u,T, \varepsilon)$ be as in \eqref{Choice-t-0-t-1-eps}.
	\\
	For all $\tilde f_r$ with 
		\begin{equation} \label{Cond-hat-f-r}
			\norm{\theta^{-3/2} \tilde f_r e^{s \varphi}}_{L^2(0,T; L^2(\T_L))} <\infty
		\end{equation}
	 and $r_0 \in L^2(\T_L)$, there exists a function $v_r \in L^2(0,T;L^2(\T_L))$ such that the solution $r$ of \eqref{Eq-Trans-For-r} satisfies the control requirement \eqref{r-T}. 
	Besides, the controlled trajectory $r$ and the control function $v_r$ satisfy
	\begin{multline} \label{Est-r-L2}
		\norm{\theta^{-3/2} r e^{s \varphi}}_{L^2(0,T; L^2(\T_L))} +\norm{\theta^{-3/2} v_r e^{s \varphi}}_{L^2(0,T; L^2(\T_L))} 
		\\
		\leq 
		C 
		\left(\norm{\theta^{-3/2} \tilde f_r e^{s \varphi}}_{L^2(0,T; L^2(\T_L))} 
		+
		\norm{r_0 e^{s \varphi(0)}}_{L^2(\T_L)}\right).
	\end{multline}
	If $r_0 \in H^1(\T_L)$ and $\tilde f_r$ satisfies \eqref{Eq-Trans-For-r} and $ \tilde f_r e^{6s \Phi/7} \in L^2(0,T;H^1(\T_L))$, then $r$ furthermore belongs to $L^2(0,T;H^1(\T_L))$ and satisfies
	\begin{multline}
		\label{Est-r-H1}
		\norm{ r e^{6 s \Phi/7} }_{L^2(0,T; H^1(\T_L))}+ \norm{  v_r e^{6 s \Phi/7} }_{L^2(0,T; H^1(\T_L))}
		\\
		\leq
		C 
		\left(
		\norm{\tilde f_r e^{6 s \Phi/7} }_{L^2(0,T;H^1(\T_L))}
		+
		\norm{\theta^{-3/2} \tilde f_r e^{s \varphi} }_{L^2(0,T;L^2(\T_L))}
		+
		\norm{r_0 e^{s \Phi(0)} }_{H^1(\T_L)}
		\right).
	\end{multline}
	If $r_0 \in H^2(\T_L)$ and $\tilde f_r$ satisfies \eqref{Eq-Trans-For-r} and $ \tilde f_r e^{6 s \Phi/7} \in L^2(0,T;H^2(\T_L))$, then $r$ belongs to $L^2(0,T;H^2(\T_L))$ and satisfies
	\begin{multline}
		\label{Est-r-H2}
		\norm{r e^{6 s \Phi/7} }_{L^2(0,T; H^2(\T_L))}+ \norm{ v_r e^{6 s \Phi/7}}_{L^2(0,T; H^2(\T_L))}
		\\
		\leq
		C \left(
		\norm{ \tilde f_r e^{6 s \Phi/7} }_{L^2(0,T; H^2(\T_L))}
		\right.
		+
		\left.
		\norm{\theta^{-3/2} \tilde f_r e^{s \varphi} }_{L^2(0,T;L^2(\T_L))}
		+
		\norm{r_0 e^{s \Phi(0)} }_{H^2(\T_L)}\right).
	\end{multline}
	Besides, this solution $(r, v_r)$ can be obtained through a linear operator in $(r_0, \tilde f_r)$.
\end{theorem}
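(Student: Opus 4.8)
The plan is to solve the transport control problem by an explicit construction that exploits the fact that the weight $\tilde\psi$ is constant along the characteristics $x - \overline u t$ and is non-critical outside $\omega \Subset \{\chi_0 = 1\}$. First I would note that the characteristics of the operator $\partial_t + \overline u \cdot \nabla$ are the lines $t \mapsto x_0 + \overline u t$; by the time condition \eqref{Choice-t-0-t-1-eps} (which guarantees $T$ is large enough that every characteristic through $\Omega$ spends a time interval of length at least, say, $T_0 + 2T_1$ inside the control region $\{\chi_0 = 1\}$), one can choose two smooth cut-off functions in time, $\eta_0$ supported near $t = 0$ and $\eta_T$ supported near $t = T$, adapted to the weight $\theta$, so that a particle starting anywhere in $\overline\Omega$ is, on the support of $\eta_0$, still in $\{\chi_0 = 1\}$ and, on the support of $\eta_T$, already (or again) in $\{\chi_0 = 1\}$. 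The idea is then to split $r = r_{(0)} + r_{(1)}$: let $r_{(1)}$ solve the \emph{uncontrolled} equation $\partial_t r_{(1)} + \overline u \cdot \nabla r_{(1)} + \frac{p'(\overline\rho)\overline\rho}{\nu} r_{(1)} = (1 - \eta_0)\tilde f_r$ with $r_{(1)}(0,\cdot) = 0$, propagated forward by the method of characteristics, so that $r_{(1)}$ is supported in $\{t \geq \text{const} > 0\}$ and can be explicitly integrated along the lines $x - \overline u t$; then correct the residual at time $T$ and the initial datum by a control supported in $\{\chi_0 = 1\}$, absorbing $\eta_0 \tilde f_r$ and $r_0$ into $v_r\chi_0$ by prescribing $r$ to vanish in a suitable way near $t = T$. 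Concretely, I would set $r(t,x) = \eta_T(t)\, \widetilde r(t,x)$ where $\widetilde r$ is a fixed solution of the transport equation with the correct initial data and right-hand side (so that $r(T) = 0$ automatically since $\eta_T(T) = 0$), and then define $v_r\chi_0$ by plugging this ansatz back into \eqref{Eq-Trans-For-r}; the terms that are not supported in $\{\chi_0 = 1\}$ cancel by the choice of the time cut-offs, exactly as in the construction in \cite{EGGP,BEG}.

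Once the trajectory and control are constructed in closed form (linearly in $(r_0, \tilde f_r)$), the weighted estimates are obtained by direct computation. For \eqref{Est-r-L2}, I would multiply the equation for $r e^{s\varphi}$ by $\theta^{-3} r e^{s\varphi}$ and integrate over $(0,T) \times \T_L$; since $\partial_t\varphi + \overline u \cdot \nabla\varphi = \theta'/\theta \cdot \varphi$ (using \eqref{TransportPoids} and $\varphi = \theta(\lambda_0 e^{12\lambda_0} - e^{\lambda_0\psi})$), the weight behaves well along characteristics, and the only delicate point is controlling the blow-up of $\theta$ near $t = T$, which is exactly what the $\theta^{-3/2}$ prefactors are designed to absorb — this is why the time cut-off $\eta_T$ must be compatible with $\theta$. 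The term $v_r\chi_0$ is estimated by the same kind of bound since it is an explicit combination of $\tilde f_r$, $r_0$ and derivatives of the cut-offs, all localized where the weight $e^{s\varphi}$ is comparable (up to the $\theta$ powers) to $e^{s\Phi}$.

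For the higher regularity estimates \eqref{Est-r-H1} and \eqref{Est-r-H2}, I would differentiate the transport equation in space once, resp. twice, and run the same weighted energy argument on $\nabla(r e^{6s\Phi/7})$, resp. $\nabla^2(r e^{6s\Phi/7})$. The commutator of $\partial_t + \overline u\cdot\nabla$ with $\nabla$ vanishes because $\overline u$ is constant, which is the crucial simplification; the only new terms come from differentiating the weight $e^{6s\Phi/7}$ and the cut-offs, both of which are harmless (they produce lower-order terms in $r$ already controlled by \eqref{Est-r-L2} and by the induction on the order of differentiation, noting $6\Phi/7 < 14\Phi/15 \le \varphi$ from \eqref{Phi-bounds} so the lower-order contributions are dominated). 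The zeroth-order term $\frac{p'(\overline\rho)\overline\rho}{\nu} r$ has constant coefficient and hence commutes with derivatives as well, contributing only a favorable sign (or an absorbable term for $s$ large). The main obstacle I anticipate is purely bookkeeping: arranging the time cut-offs $\eta_0, \eta_T$ so that simultaneously (i) the control is supported in $[0,T]\times\{\chi_0=1\}$, (ii) $r(T) = 0$, and (iii) all the resulting weighted norms are finite with the stated $\theta$-powers and the stated exponential weights $e^{s\varphi}$ versus $e^{6s\Phi/7}$ — in particular checking that the weight $e^{s\varphi}$ near $t=T$, where $\theta(t) = 1/(T-t) \to \infty$, does not destroy integrability, which forces the control and trajectory to be supported away from $t = T$ on the relevant part of space, consistent with \eqref{Choice-t-0-t-1-eps}.
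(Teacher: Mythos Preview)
Your overall strategy (explicit construction followed by weighted energy estimates, then differentiate for higher regularity) matches the paper's, and the estimation part is essentially right. The gap is in the construction itself.

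You propose purely \emph{time} cut-offs $\eta_0,\eta_T$ and the ansatz $r=\eta_T(t)\,\widetilde r(t,x)$ with $\widetilde r$ the free forward solution. Plugging in gives
\[
v_r\chi_0=(\eta_T-1)\tilde f_r+\eta_T'\,\widetilde r.
\]
On the support of $\eta_T-1$ (times near $T$) the first term is essentially $-\tilde f_r$, which lives everywhere in $\T_L$; it is \emph{not} supported in $\{\chi_0=1\}$. The same holds for $\eta_T'\widetilde r$, since the forward solution $\widetilde r$ has no spatial localization. Your earlier splitting $r_{(0)}+r_{(1)}$ has the same defect: ``absorbing $\eta_0\tilde f_r$ into $v_r\chi_0$'' fails because $\eta_0\tilde f_r$ is not spatially localized, and ``correcting the residual $r_{(1)}(T)$'' amounts to null-controlling a generic datum from $\{\chi_0=1\}$ in short time, which is the problem you are trying to solve.

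The missing idea is to introduce, in addition to the forward solution $r_f$ (with $r_f(0)=r_0$), the \emph{backward} solution $r_b$ of the same transport equation with $r_b(T)=0$ and source $\tilde f_r$. Then $r_b$ absorbs $\tilde f_r$ near $t=T$ with no control at all. One interpolates between $r_f$ and $r_b$ not with a time cut-off but with a cut-off $\eta$ \emph{transported by the flow}: $\eta(t,x)=\eta_0(x-\overline u t)$ with $\eta_0$ a spatial cut-off equal to $1$ on a neighbourhood of $\overline\Omega$. One then sets, with a further spatial cut-off $\eta_2$ (equal to $1$ near $\Omega$) and a time cut-off $\eta_1$ (equal to $1$ near $t=0$),
\[
r=\eta_2(x)\bigl(\eta\,r_f+(1-\eta)\,r_b\bigr)+(1-\eta_2(x))\,\eta_1(t)\,r_f.
\]
The time condition \eqref{Choice-t-0-t-1-eps} guarantees $\eta_2\eta=0$ for $t\in[T-2T_1,T]$ and $\eta_2(1-\eta)=0$ for $t\in[0,T_0]$, so $r(0)=r_0$ and $r(T)=0$. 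Every term in the resulting $v_r$ carries a factor $1-\eta_2$ or $\nabla\eta_2$, hence is supported in $\{d(x,\Omega)\ge 3\varepsilon\}\subset\{\chi_0=1\}$. This is the mechanism that localizes the control in space; time cut-offs alone cannot do it.

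Once this construction is in place, your weighted-energy argument goes through: one estimates $r_f$ on $(0,T-2T_1)$ and $r_b$ on $(T_0,T)$ separately, using $(\partial_t+\overline u\cdot\nabla)(\theta^{-3}e^{2s\varphi})\le 0$ on the respective intervals, and the higher-order estimates follow by differentiating (the commutator with $\nabla$ indeed vanishes since $\overline u$ and $a$ are constant).
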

\begin{proof}
The proof of Theorem \ref{Thm-Trans-Control} consists in an explicit construction solving the control problem \eqref{Eq-Trans-For-r}--\eqref{r-T} and then on suitable estimates on it. \par
\ \\
\noindent
{\it An explicit construction.} Let $\eta_0$ be a smooth cut-off function taking value $1$ on $\{x \in \T_L, \hbox{ with } d(x, \overline{\Omega})\leq 5 \varepsilon + |\overline u| T_0 \}$ and vanishing on $\{ x \in \T_L, \, d(x, \Omega) \geq 6 \varepsilon+ |\overline u| T_0\}$. We then introduce $\eta$ the solution of 
	\begin{equation} \label{Def-eta}
		\left\{
			\begin{array}{l}
				\partial_t \eta + \overline{u} \cdot \nabla \eta  = 0, \text{ in } (0,T) \times \T_L,	
				\\
				\eta(0, \cdot) = \eta_0 \text{ in } \T_L,
			\end{array}
		\right.
	\end{equation}
	and the solutions $r_f$ and $r_b$ (here `$f$' stands for forward, `$b$' for backward) of
	\begin{equation} \label{Def-r-f}
		\left\{
			\begin{array}{l}
				\partial_t r_f + \overline{u} \cdot \nabla r_f + a r_f =  \tilde f_r, \text{ in } (0,T) \times \T_L,	
				\\
				r_f(0, \cdot) = r_0 \text{ in } \T_L,
			\end{array}
		\right.
	\end{equation}
	and
	\begin{equation} \label{Def-r-b}
		\left\{
			\begin{array}{l}
				\partial_t r_b + \overline{u} \cdot \nabla r_b + a r_b = \tilde f_r, \text{ in } (0,T) \times \T_L,	
				\\
				r_b(T, \cdot) = 0 \text{ in } \T_L,
			\end{array}
		\right.
	\end{equation}
	where $a$ denotes the constant
	$$
		a= \frac{p'(\overline{\rho}) \overline{\rho}}{\nu}.
	$$
	We then set
	\begin{equation} \label{r-Explicit-Construction}
		r = \eta_2(x) \left( \eta r_f + (1- \eta) r_b \right) + ( 1- \eta_2(x)) \eta_1(t) r_f, \text{ in } (0,T) \times \T_L, 
	\end{equation}
	where $\eta_1(t)$ is a smooth cut-off function taking value $1$ on $[0,T_0/2]$ and vanishing for $t \geq T_0$ and $\eta_2 = \eta_2(x)$ is a smooth cut-off function taking value $1$ for $x$ with $d(x, \Omega) \leq 3 \varepsilon$ and vanishing for $x$ with $d(x, \Omega)\geq 4 \varepsilon$.
	One easily checks that $r$ solves
	\begin{multline}
		\partial_t r +\overline{u} \cdot \nabla r + a r = \eta_2 \tilde f_r + (1- \eta_2) \eta_1 \tilde f_r + \overline{u} \cdot \nabla \eta_2  (\eta r_f + (1- \eta) r_b)
		\\
		-   \eta_1 \overline{u} \cdot \nabla \eta_2  r_f + (1- \eta_2) \partial_t \eta_1 r_f
		\text{ in } (0,T) \times \T_L,
	\end{multline}
	thus corresponding to a control function
\begin{equation} \label{v-explicit}
	v_r = 
	(\eta_2-1) \tilde f_r + (1- \eta_2) \eta_1 \tilde f_r + \overline{u}  \cdot \nabla \eta_2  (\eta r_f + (1- \eta) r_b)
		\\
		-   \eta_1 \overline{u} \cdot \nabla \eta_2  r_f + (1- \eta_2) \partial_t \eta_1 r_f, 
\end{equation}
	localized in the support of $\chi_0$ due to the condition on the support of $\eta_2$.
	Besides, $r$ given by \eqref{r-Explicit-Construction} satisfies
	$$
		r(0, \cdot) =  r_0 \text{ in } \T_L, \qquad r(T, \cdot) = 0 \text{ in } \T_L
	$$
	due to the conditions on the support of $\eta_0$, $\eta_1$, $\eta_2$ and the condition \eqref{Choice-t-0-t-1-eps} on the flow corresponding to $\overline{u}$.
	\\
	Actually, thanks to the choice of $\varepsilon>0, T_0>0$ and $T_1>0$ in \eqref{Choice-t-0-t-1-eps} we have
	\begin{equation}
		\label{Supports-Eta}
		\eta_2 (1 - \eta) = 0 \quad \hbox{ for all } (t,x) \in [0,T_0] \times \T_L,\quad \hbox{ and } \quad \eta_2 \eta = 0 \quad \hbox{ for all } (t,x) \in [T-2T_1,T] \times \T_L.
	\end{equation}
\par
\ \\
\noindent
{\it Estimates on $r$.} 
Let us start with estimates on $r_f$. 
To get estimates on $r_f$, we perform weighted energy estimates on \eqref{Def-r-f} on the time interval $(0,T - 2T_1)$. Multiplying \eqref{Def-r-f} by $\theta^{-3} r_f e^{2 s \varphi}$, we obtain
\begin{multline}
	\frac{d}{d} \left( \frac{1}{2} \int_{\T_L} \theta^{-3} |r_f|^2 e^{2s \varphi} \right) \leq \frac{1}{2} \int_{\T_L} |r_f|^2 \left(  - 2 a \theta^{-3} e^{2 s \varphi} + (\partial_t + \overline{u}\cdot \nabla) (\theta^{-3} e^{2s \varphi})\right)
	\\
	+ \left(\int_{\T_L} \theta^{-3} |r_f|^2 e^{2s \varphi} \right)^{1/2} \left(\int_{\T_L} \theta^{-3} |\tilde f_r|^2 e^{2s \varphi} \right)^{1/2}. 
\end{multline}
But, for all $t \in (0,T - 2 T_1)$ and $x \in \T_L$,
$$
	(\partial_t + \overline{u} \cdot \nabla) (\theta^{-3} e^{2s \varphi}) \leq 0. 
$$
We thus conclude
$$
	\norm{\theta^{-3/2} r_f  e^{s \varphi}}_{L^\infty(0,T-2 T_1; L^2)}
	\leq 
	C 
	\norm{ \theta^{-3/2} \tilde f_r e^{s \varphi}}_{L^2(0,T-2T_1; L^2)} 
	+
	C \norm{r_0 e^{s \varphi(0)}}_{L^2}.
$$
Similarly, one can show that $r_b$ satisfies
 $$
	\norm{\theta^{-3/2}r_b  e^{s \varphi}}_{L^\infty(T_0,T; L^2)}
	\leq 
	C 
	\norm{ \theta^{-3/2}\tilde f_r e^{s \varphi}}_{L^2(T_0,T; L^2)}. 
$$	
To conclude that 
$$
	\norm{\theta^{-3/2} re^{s \varphi}}_{L^\infty(0,T; L^2)}
	\leq 
	C 
	\norm{ \theta^{-3/2}\tilde f_r e^{s \varphi}}_{L^2(0,T; L^2)} 
	+
	C \norm{r_0 e^{s \varphi(0)}}_{L^2}, 
$$
we use the explicit definition of $r$ in \eqref{r-Explicit-Construction} and identity  \eqref{Supports-Eta}, and notice that $\eta_0$, $\eta_1$ and $\eta_{2}$ belong to $L^\infty,$ and $(\eta, \widehat  u)$ to $L^\infty(L^{\infty})$.
\par
The estimate on $v_r$ in \eqref{Est-r-L2} is also a simple consequence of its explicit value in \eqref{v-explicit} and the fact that $\eta_0 \in W^{1,\infty},\, \eta_1 \in W^{1,\infty},\,  \eta \in L^\infty(L^{\infty}),\,  \eta_2 \in L^\infty$.\par
\ \\
\noindent {\it Regularity results.}
	To obtain regularity results on $r$ and $v_r$, it is then sufficient to get regularity estimates on $r_f$  solution of \eqref{Def-r-f} on the time interval $(0,T-2 T_1)$ and on $r_b$ solution of \eqref{Def-r-b} on the time interval $(T_0, T)$. Of course, these estimates will be of the same nature, so we only focus on $r_f$, the other case being completely similar.  
	\par
	To get weighted estimates in higher norms, we do higher order energy estimates on \eqref{Def-r-f}. For instance, $\nabla r_f$ satisfies the equation
	\begin{equation} \label{Def-Nabla-r-f}
		\left\{
			\begin{array}{l}
				\partial_t \nabla r_f + (\overline{u} \cdot \nabla) \nabla r_f + a \nabla r_f  =  \nabla \tilde f_r, \text{ in } (0,T) \times \T_L,	
				\\
				\nabla r_f(0, \cdot) = \nabla r_0 \text{ in } \T_L,
			\end{array}
		\right.
	\end{equation}
	Hence, using that $\partial_t \Phi \leq 0$ on $(0,T- 2T_1)$, energy estimates directly provide
	\begin{equation}
		\label{Est-r-H1-r-f}
		\norm{\nabla r_f e^{6 s \Phi/7}}_{L^\infty(0,T-2 T_1; L^2)} \leq C
		\left( \norm{\nabla \tilde f_r e^{6 s \Phi/7}}_{L^1(L^2)} + \norm{\nabla r_0 e^{6 s \Phi/7}}_{L^2} \right).
	\end{equation}
	This implies \eqref{Est-r-H1}.
	\par
	The equation of $\nabla^2 r_f$ has the same form. For all $(i,j) \in \{1, \cdots, d\}^2$,
	\begin{equation} \label{Def-Nabla-2-r-f}
		\left\{
			\begin{array}{l}
				\partial_t \partial_{i,j} r_f + (\overline{u} \cdot \nabla) \partial_{i,j} r_f + a \partial_{i,j} r_f   =  \partial_{i,j} \tilde f_r, \text{ in } (0,T) \times \T_L,	
				\\
				\partial_{i,j} r_f(0, \cdot) = \partial_{i,j} r_0 \text{ in } \T_L. 
			\end{array}
		\right.
	\end{equation}
	An energy estimate for $\nabla^2 r_f$ on $(0,T-2 T_1)$ directly yields
	$$
		\norm{\nabla^2 r_f e^{6 s \Phi/7}}_{L^\infty(0,T-2 T_1; L^2)} \leq 
		C \left(
		\norm{\nabla^2 \tilde f_r e^{6 s \Phi/7}}_{L^1(L^2)} + \norm{\nabla^2 r_0 e^{6 s \Phi/7}}_{L^2}
		\right),
	$$
	thus concluding the proof of Theorem \ref{Thm-Trans-Control}.
\end{proof}
%
%
%
%
%
%
\subsection{Proof of Theorem \ref{Thm-Main-Control-Reduced}}
{\it Existence of a solution to the control problem.} We construct the controlled trajectory using a fixed point argument. 

We introduce the sets
\begin{equation}
	\begin{array}{ll}
	&
	\mathscr{C}_{s}^r = \{r \in L^2(0,T; L^2(\T_L))\,  \hbox{ such that }  \theta^{-3/2} r e^{s \varphi} \in L^2(0,T; L^2(\T_L))
	\},
	\smallskip
	\\
	& \mathscr{C}_{s}^y = \{y \in L^2(0,T; H^1(\T_L))\,  \hbox{ such that }  y e^{s \varphi}, \theta^{-1} \nabla y e^{s \varphi} \in L^2(0,T; L^2(\T_L)) \}.
	\end{array}
\end{equation}

For $\tilde r \in \mathscr{C}_{s}^r$ and $\tilde y \in \mathscr{C}_{s}^y$, we introduce
\begin{eqnarray*}
	\tilde f_r : =& \tilde f_r(\tilde y) & =  f_r 
	  - \frac{p'(\overline{\rho})\overline{\rho}^3}{\nu^2} \tilde y, 
	\\
	\tilde f_y :=&\tilde f_y(\tilde r, \tilde y) &= f_y + \frac{p'(\overline{\rho})}{\nu} \tilde r - \frac{\overline{\rho}\overline{u}}{\nu} \cdot \nabla \tilde y + \frac{p'(\overline \rho) \overline{\rho}^2}{\nu^2} \tilde y. 
\end{eqnarray*}
	As $f_r$ and $f_y$ satisfy \eqref{Conditions-f-L2}, for $(\tilde r, \tilde y) \in \mathscr{C}_{s}^r \times \mathscr{C}_{s}^y$, $\tilde f_r$ satisfies \eqref{Cond-hat-f-r} and $\tilde f_y$ satisfies \eqref{Cond-hat-f-y}.\par
Therefore, one can define a map $\Lambda_{s}$ on $\mathscr{C}_{s}^r\times \mathscr{C}_{s}^y$ which to a data $(\tilde r, \tilde y) \in \mathscr{C}_{s}^r\times \mathscr{C}_{s}^y$ associates $(r,y)$, where $r$ is the solution of the controlled problem 
\begin{equation}
\label{Control-Hyper-ReducedEq-FixPoint-r}
\left\{ \begin{array}{ll}
	\ds  \partial_{t} r + \overline{u}  \cdot \nabla r + \frac{p'(\overline{\rho})\overline{\rho} }{\nu} r =   \tilde f_r + v_r \chi_0,
	& \text{ in } (0,T) \times \T_L,  
	\\
	r(0, \cdot) = r_0(\cdot), \qquad r(T, \cdot) = 0, 
	& \text{ in } \T_L,  
\end{array}\right.
\end{equation}
given by Theorem \ref{Thm-Trans-Control}, and $y$ is the solution of the controlled problem
\begin{equation} \label{Control-Hyper-ReducedEq-FixPoint-y}
\left\{ \begin{array}{ll}	
	\ds  \frac{\overline{\rho}}{\nu} \partial_{t} y -  \Delta y  =
	\tilde f_y + v_y \chi_0, 
	& \text{ in } (0,T) \times \T_L, 
	\\
	y(0, \cdot) = y_0(\cdot), \qquad y(T, \cdot) = 0, 
	& \text{ in } \T_L,  
\end{array} \right.
\end{equation}
given by Theorem \ref{Thm-Est-Y-Carl-NormsL2}. \par
Then we remark that Theorems~\ref{Thm-Est-Y-Carl-NormsL2} and \ref{Thm-Trans-Control} both yield a linear construction, respectively for $(y_0, \tilde f_y) \mapsto (y, v_y)$ and for $(r_0, \tilde f_r) \mapsto (r, v_r)$. In order to apply Banach's fixed point theorem, let us show that the map $\Lambda_s$ is a contractive mapping for $s$ large enough. \par
Let $(\tilde r_a, \tilde y_a)$ and $(\tilde r_b, \tilde y_b)$ be elements of $\mathscr{C}_{s}^r\times \mathscr{C}_{s}^y$, and call their respective images  
$(r_a, y_a) = \Lambda_{s}(\tilde r_a, \tilde y_a)$, and $(r_b, y_b) = \Lambda_{s}(\tilde r_b, \tilde y_b)$.
Setting ${\mathcal R} = r_a - r_b$, ${\mathcal Y} = y_a - y_b$, $\tilde{\mathcal R} = \tilde r_a - \tilde r_b$, $\tilde{\mathcal Y} = \tilde y_a - \tilde y_b$,
$ \tilde{\mathcal F}_r = \tilde f_r (\tilde y_a) - \tilde f_r (\tilde y_b)$ and  $\tilde{\mathcal F}_y = \tilde f_y (\tilde r_a, \tilde y_a) - \tilde f_y (\tilde r_b, \tilde y_b)$, by Theorem \ref{Thm-Trans-Control} we have
\begin{equation*}
	\norm{\theta^{-3/2} {\mathcal R} e^{s \varphi}}_{L^2(L^2)} 
	\leq 
	C \norm{\theta^{-3/2} \tilde{\mathcal F}_r e^{s \varphi}}_{L^2(L^2)} 
	\leq
	C \norm{\theta^{-3/2} \tilde{\mathcal Y} e^{s \varphi}}_{L^2(L^2)}, 
\end{equation*}
while Theorem \ref{Thm-Est-Y-Carl-NormsL2} implies
\begin{multline*}
		s^{3/2}  \norm{ {\mathcal Y} e^{s \varphi}}_{L^2(L^2)} 
		+
		s^{1/2} \norm{\theta^{-1} \nabla {\mathcal Y} e^{s \varphi}}_{L^2(L^2)} 
		\leq
		 C \norm{\theta^{-3/2} \tilde{\mathcal F}_{y} e^{s \varphi}} 
		\\
		\leq C \left( 
		\norm{\theta^{-3/2} \tilde{\mathcal R} e^{s \varphi}}_{L^2(L^2)} 
		+ 
		\norm{\theta^{-3/2} \tilde{\mathcal Y} e^{s \varphi}}_{L^2(L^2)}
		+
		 \norm{\theta^{-3/2} \nabla \tilde{\mathcal Y}  e^{s \varphi}}_{L^2(L^2)}
		\right).
\end{multline*}
In particular, we have
\begin{multline*}
	\norm{\theta^{-3/2} {\mathcal R} e^{s \varphi}}_{L^2(L^2)} 
	+
	s  \norm{ {\mathcal Y} e^{s \varphi}}_{L^2(L^2)} 
	+ 
	 \norm{\theta^{-1} \nabla {\mathcal Y} e^{s \varphi}}_{L^2(L^2)}
	\\
	\leq
	C s^{-1/2} \left(  \norm{\theta^{-3/2} \tilde{\mathcal R} e^{s \varphi}}_{L^2(L^2)} + s  \norm{ \tilde{\mathcal Y} e^{s \varphi}}_{L^2(L^2)} 
	+ 
	 \norm{\theta^{-1} \nabla \tilde{\mathcal Y} e^{s \varphi}}_{L^2(L^2)}\right).
\end{multline*}
Thus the quantity
$$
	\norm{(r, y)}_{\mathscr{C}_{s}^r\times \mathscr{C}_{s}^y} = 
	\norm{\theta^{-3/2} r e^{s \varphi}}_{L^2(L^2)} 
	+
	s  \norm{ y e^{s \varphi}}_{L^2(L^2)} 
	+ 
	 \norm{\theta^{-1} \nabla y e^{s \varphi}}_{L^2(L^2)}
$$
defines a norm on $\mathscr{C}_{s}^r\times \mathscr{C}_{s}^y$ for which the map $\Lambda_s$ satisfies
\begin{equation}
	\norm{\Lambda_{s}(\tilde r_a, \tilde y_a)- \Lambda_{s}(\tilde r_b, \tilde y_b)}_{\mathscr{C}_{s}^r\times \mathscr{C}_{s}^y} 
	\leq
	C s^{-1/2} \norm{(r_a ,y_a)- (r_b, y_b)}_{\mathscr{C}_{s}^r\times \mathscr{C}_{s}^y}.
\end{equation}
Consequently, if $s$ is chosen large enough, the map $\Lambda_s$ is a contractive mapping and by Banach's fixed point theorem, $\Lambda_s$ has a unique fixed point $(r, y)$ in $\mathscr{C}_{s}^r\times \mathscr{C}_{s}^y$. 
By construction, this fixed point $(r,y)$ solves the controllability problem \eqref{Control-ReducedEq}. Besides, estimating $\tilde f_r(y)$ and $\tilde f_y(r)$ by 
\begin{equation}
\norm{\theta^{-3/2} \tilde f_r(y) e^{s \varphi}}_{L^2(L^2)} 
\leq 
C 
 \norm{ y e^{s \varphi}}_{L^2(L^2)} 
+
C \norm{\theta^{-3/2}f_r e^{s \varphi}}_{L^2(L^2)} 
\end{equation}
and
\begin{multline}
\norm{\theta^{-3/2} \tilde f_y(r,y) e^{s \varphi}}_{L^2(L^2)} 
\leq 
C \norm{\theta^{-3/2} f_y e^{s \varphi}}_{L^2(L^2)}
\\
+
C \left( \norm{\theta^{-3/2} r e^{s \varphi}}_{L^2(L^2)} 
+
\norm{\theta^{-3/2} y e^{s \varphi}}_{L^2(L^2)}
+ 
\norm{\theta^{-3/2} \nabla y e^{s \varphi}}_{L^2(L^2)}\right)
,  
\end{multline}
one gets with Theorems~\ref{Thm-Est-Y-Carl-NormsL2} and \ref{Thm-Trans-Control} that $(r,y)$ solution of \eqref{Navier-Stokes-Adjoint-Mu-Sigma} satisfies
\begin{multline*}
	\norm{\theta^{-3/2}  r e^{s \varphi}}_{L^2(L^2)} 
	+ s \norm{ y e^{s \varphi}}_{L^2(L^2)} 
	+   \norm{\theta^{-1} \nabla y e^{s \varphi}}_{L^2(L^2)} 
	\\
\leq
C \left( \norm{\theta^{-3/2} f_r e^{s \varphi}}_{L^2(L^2)} 
+
 s^{-1/2} \norm{\theta^{-3/2} f_y e^{s \varphi}}_{L^2(L^2)}\right) 
+
C \left( \norm{ r_0 e^{ s \varphi(0)}}_{L^2} +  \norm{ y_0 e^{ s \varphi(0)}}_{L^2}\right), 
\end{multline*}
that is, the estimate \eqref{Estimate-On-r-y}.\par
\medskip
\noindent{\it Regularity estimates on the solution of the control problem.} We now further assume that $(r_0, y_0) \in H^2(\T_L) \times H^3(\T_L)$ and $f_r, \, f_y$ satisfy \eqref{Cond-Reg-f}. We use a bootstrap argument. For preciseness, we set 
\begin{eqnarray*}
	f_r(y)  & = &  f_r  - \frac{p'(\overline{\rho})\overline{\rho}^3}{\nu^2} y, 
	\\
	f_y(r,y) & = &  f_y + \frac{p'(\overline{\rho})}{\nu} r - \frac{\overline{\rho}\overline{u}}{\nu} \cdot \nabla y + \frac{p'(\overline \rho) \overline{\rho}^2}{\nu^2} y. 
\end{eqnarray*}
First, thanks to \eqref{Est-Y-L2H2-Init-H1}, we have $\theta^{-2} y e^{s\varphi} \in L^2(0,T;H^2(\T_L))$, so that $ f_r(y) e^{6s \Phi/7}  \in L^2(0,T; H^2(\T_L))$  (recall condition \eqref{Cond-Reg-f}). Using \eqref{Est-r-H2}, we deduce $ r e^{6 s \Phi/7} \in L^2(0,T;H^2(\T_L))$. Hence we obtain that $ f_y(r, y) e^{6 s\Phi/7} \in L^2(0,T;H^1(\T_L))$. By Proposition~\ref{Prop-Reg-heat}, $y e^{6 s \Phi/7} \in L^2(0,T; H^3(\T_L))$. Therefore, we have $ f_y(r, y) e^{6 s\Phi/7} \in L^2(0,T;H^2(\T_L))$, and using again Proposition \ref{Prop-Reg-heat}, we get $y e^{6 s \Phi/7} \in L^2(0,T; H^4(\T_L))$.
\\
The above regularity results come with estimates. Tracking them yields estimate \eqref{Est-reg-fix-point}.
\section{Controllability estimates for (\ref{Navier-Stokes-Linear})}\label{Sec-Control-NS-Linear}
In this section, we study the controllability of \eqref{Navier-Stokes-Linear}. This is given in the following statement.
\begin{theorem}
\label{Thm-Control-NS-linear}
	There exists $s_0 \geq 1$, such that  for all $s \geq s_0$, for all $(\widehat  \rho_0, \widehat  u_0) \in H^2(\T_L) \times H^2(\T_L)$, $\widehat  f_\rho, \widehat  f_u$ such that $\widehat  f_\rho e^{s \Phi} \in L^2(0,T; H^2(\T_L)), \, \widehat  f_u e^{7 s \Phi/6 } \in L^2(0,T; H^1(\T_L))$, there exist control functions $v_\rho, \, v_u$ and a corresponding controlled trajectory $(\rho, u)$ solving \eqref{Navier-Stokes-Linear} with initial data $(\widehat  \rho_0, \widehat  u_0)$, satisfying the controllability requirement \eqref{ControlReq}, and depending linearly on the data $(\widehat  \rho_0, \widehat  u_0, \widehat  f_\rho, \widehat  f_u)$. Besides, we have the estimate:
	\begin{multline}
		\label{Estimate-Control-Linear-1}
		\! \! \! \!
		\norm{(\rho e^{6 s  \Phi/7}, u e^{6 s \Phi/7} )}_{L^2(0,T; H^2(\T_L))\times L^2(0,T; H^3(\T_L))} 
		+ 
		\norm{(\chi v_\rho e^{6s \Phi/7}, \chi v_u e^{6 s \Phi/7})}_{L^2(0,T; H^2(\T_L)) \times L^2(0,T; H^1(\T_L))}
	\\
		\leq
		C
		\norm{(\widehat  f_\rho e^{s \Phi}, \widehat  f_u e^{7 s \Phi/6})}_{L^2(0,T; H^2(\T_L)) \times L^2(0,T; H^1(\T_L))} 
		+ 
		C 
		\norm{(\widehat  \rho_0 e^{s \Phi(0)}, \widehat  u_0 e^{7 s \Phi(0)/6} )}_{H^2(\T_L)\times H^2(\T_L)}.
	\end{multline}
	In particular, this implies
	\begin{multline}
		\label{Estimate-Control-Linear-2}
		\norm{(\rho e^{5 s  \Phi/6}, u e^{5 s \Phi/6} )}_{(C^0([0,T]; H^2(\T_L)) \cap H^1 (0,T; L^2(\T_L)))\times (L^2(0,T; H^3(\T_L))\cap C^0([0,T]; H^2(\T_L)) \cap H^1(0,T; H^1(\T_L)))} 
	\\
		\leq
		C
		\norm{(\widehat  f_\rho e^{s \Phi}, \widehat  f_u e^{7 s \Phi/6})}_{L^2(0,T; H^2(\T_L)) \times L^2(0,T; H^1(\T_L))} 
		+ 
		C 
		\norm{(\widehat  \rho_0 e^{s \Phi(0)}, \widehat  u_0 e^{7 s \Phi(0)/6} )}_{H^2(\T_L)\times H^2(\T_L)}.
	\end{multline}
	This allows to define a linear operator $\mathscr{G}$ defined on the set 
	\begin{multline*}
		\Big\{
			(\widehat  \rho_0, \widehat  u_0, \widehat  f_\rho, \widehat  f_u) \in H^2(\T_L)\times H^2(\T_L) \times  L^2(0,T; H^2(\T_L))\times  L^2(0,T; H^1(\T_L))
			\\
			\hbox{ with } 
			\widehat  f_\rho e^{s \Phi} \in L^2(0,T; H^2(\T_L))
			\hbox{ and }
			\widehat  f_u e^{7 s \Phi/6 } \in L^2(0,T; H^1(\T_L))
		\Big\}
	\end{multline*}
	by $\mathscr{G}(\widehat  \rho_0, \widehat  u_0, \widehat  f_\rho, \widehat  f_u) = (\rho, u)$, where $(\rho, u)$ denotes a controlled trajectory solving \eqref{Navier-Stokes-Linear} with initial condition $(\widehat  \rho_0, \widehat  u_0)$, satisfying the control requirement \eqref{ControlReq} and estimates \eqref{Estimate-Control-Linear-1}--\eqref{Estimate-Control-Linear-2}.
\end{theorem}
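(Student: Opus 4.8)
The plan is to assemble the controlled trajectory of \eqref{Navier-Stokes-Linear} from the three building blocks already established: the controllability of the coupled transport--parabolic system \eqref{Control-ReducedEq} (Theorem~\ref{Thm-Main-Control-Reduced}), of the heat equation (Theorem~\ref{Thm-Est-Y-Carl-NormsL2} and Proposition~\ref{Prop-Reg-heat}) and of the transport equation (Theorem~\ref{Thm-Trans-Control}), each of which comes with a \emph{linear} solution operator together with both an $L^2$-type and a higher-regularity weighted estimate. Equivalently, the argument can be phrased on the adjoint system \eqref{Navier-Stokes-Adjoint}: its observability follows by combining the observability of \eqref{Navier-Stokes-Adjoint-Mu-Sigma} in the variables $(\sigma,q)$, which is precisely the dual of Theorem~\ref{Thm-Main-Control-Reduced}, with the remark that once $\div z = (q-\overline\rho\sigma)/\nu$ is under control each component of $z$ solves a scalar heat equation, to which the Carleman estimate \eqref{CarlemanEst-L2L2} of Theorem~\ref{CarlemanThm-L2L2} applies. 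I will describe the constructive (primal) version of the argument.

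First I would split the velocity on the torus into its potential and solenoidal parts, writing $u=\nabla\pi+u^{\perp}$ with $\div u^{\perp}=0$, and undo on the primal side the change of unknowns \eqref{Def-q} (so that $\rho$ becomes an affine function of the reduced unknowns $(r,y)$ and the potential part of $u$ an explicit multiple of $\nabla y$). With this substitution, the equations satisfied by $(\rho,\div u)$ coming from \eqref{Navier-Stokes-Linear} take exactly the form \eqref{Control-ReducedEq}, with source terms built linearly from $\widehat f_\rho$ and $\div\widehat f_u$ --- this is why the hypothesis on $\widehat f_u$ requires one derivative less than the one on $\widehat f_\rho$. Theorem~\ref{Thm-Main-Control-Reduced} then supplies $(r,y)$ and controls supported in $\{\chi_0=1\}$, with estimates \eqref{Estimate-On-r-y} and \eqref{Est-reg-fix-point}. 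The solenoidal part $u^{\perp}$ is governed, through the vorticity $\operatorname{curl}u$, by a pure convection--heat equation, which I would drive to zero by applying Theorem~\ref{Thm-Est-Y-Carl-NormsL2} and Proposition~\ref{Prop-Reg-heat} componentwise. Reconstructing $u$ from $\div u$ and $\operatorname{curl}u$ costs nothing in the weighted estimates since the weight $\Phi=\Phi(t)$ is independent of $x$; the zero-frequency modes of $\rho$ and $u$ are handled by trivial ODEs on $\T_L$. Once $\rho$, $\div u$, $\operatorname{curl}u$ and the means of $\rho$ and $u$ are driven to $0$ at time $T$, so are $\rho$ and $u$, which yields \eqref{ControlReq}.

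The regularity estimate \eqref{Estimate-Control-Linear-1} would then follow from a bootstrap entirely parallel to the one at the end of the proof of Theorem~\ref{Thm-Main-Control-Reduced}: one feeds the already obtained weighted $L^2$ bounds into the transport equation for $\rho$ --- which gains no derivative, whence the requirement $\widehat f_\rho e^{s\Phi}\in L^2(H^2)$ and the matching $H^2$ regularity of $v_\rho$ provided by Theorem~\ref{Thm-Trans-Control} --- and into the parabolic equation for $u$ --- which gains two derivatives, via Proposition~\ref{Prop-Reg-heat} and maximal parabolic regularity --- alternating the two until $\rho e^{6s\Phi/7}\in L^2(H^2)$ and $u e^{6s\Phi/7}\in L^2(H^3)$. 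The successive small losses in the exponent ($s\Phi$, $7s\Phi/6$, $6s\Phi/7$) serve to absorb the time derivative $\partial_t\Phi$ of the weight produced at each conjugation. Estimate \eqref{Estimate-Control-Linear-2} then comes from \eqref{Estimate-Control-Linear-1} by classical energy and maximal-regularity estimates applied directly to \eqref{Navier-Stokes-Linear}, the further slight loss $6/7\to5/6$ again absorbing $\partial_t\Phi$; and the linearity of $\mathscr G$ is inherited from the linearity of each building-block operator.

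I expect the main obstacle to lie in the reconstruction step. The controls furnished separately by Theorem~\ref{Thm-Main-Control-Reduced} (for $\rho$ and $\div u$) and by Theorem~\ref{Thm-Est-Y-Carl-NormsL2} (for $\operatorname{curl}u$) prescribe $\div(v_u\chi)$ and $\operatorname{curl}(v_u\chi)$, but producing from them a single velocity control $v_u$ genuinely supported in $\{\chi\neq0\}$ requires inverting Helmholtz/Biot--Savart type operators, which are nonlocal. I would handle this by inserting an intermediate open set $\{\chi_0=1\}\Subset\omega'\Subset\{\chi=1\}$ and using a localized reconstruction (essentially $\zeta\,\nabla^{\perp}\Delta^{-1}$ in dimension two, and its analogue in dimension three, with $\zeta$ a cut-off equal to $1$ on $\{\chi_0=1\}$ supported in $\omega'$); the commutator errors it generates are supported in $\omega'\setminus\{\chi_0=1\}\subset\{\chi=1\}$, hence can be absorbed as admissible extra source terms, at the cost of a small fixed-point argument if needed. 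The second delicate point, already visible in the statement, is the careful bookkeeping of the powers of $s$ and $\theta$ and of the exponents $c$ in $e^{cs\Phi}$ throughout the bootstrap, so that the asymmetric hypotheses on $\widehat f_\rho$ and $\widehat f_u$ and the target $H^2\times H^3$ regularity all close consistently --- this is where the contrast between the transport equation (no smoothing) and the parabolic equation (two derivatives gained) must be balanced exactly.
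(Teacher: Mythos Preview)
Your one-sentence dual sketch --- observability of $(\sigma,q)$ via duality with Theorem~\ref{Thm-Main-Control-Reduced}, then observability of $z$ componentwise from its own heat equation --- is in fact exactly the route the paper takes, and the paper does \emph{not} do the primal construction you then elaborate. Concretely, the paper pairs the adjoint equations with the controlled trajectories of Theorem~\ref{Thm-Main-Control-Reduced} to obtain an observability estimate for $(\sigma,q)$ in $L^2(H^{-2})\times L^2(H^{-2})$; it then rewrites the $z$-equation as
\[
-\overline\rho(\partial_t z+\overline u\cdot\nabla z)-\mu\Delta z
= g_z+\overline\rho\Big(1-\tfrac{\lambda+\mu}{\nu}\Big)\nabla\sigma+\tfrac{\lambda+\mu}{\nu}\nabla q,
\]
and pairs this with the heat controllability of Proposition~\ref{Prop-Reg-heat} applied with $s$ replaced by $7s/6$ (this is what produces the asymmetric exponent $7s\Phi/6$ on $\widehat f_u$). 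The local observation term $\chi_0 q$ is bounded by $\chi z$ and $\chi\sigma$ using $\chi_0\chi=\chi_0$, and one final duality delivers \eqref{Estimate-Control-Linear-1}; \eqref{Estimate-Control-Linear-2} is then a direct regularity estimate on $\rho e^{5s\Phi/6}$ and $u e^{5s\Phi/6}$.

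Your primal Helmholtz route is a genuinely different argument. Its payoff would be a more explicit controlled trajectory, but the obstacle you identify is real and is precisely what the paper's dual approach sidesteps: on the adjoint side $z$ is handled as a full vector-valued heat unknown with $\nabla\sigma,\nabla q$ appearing only as already-observed source terms, so no assembly of a single $v_u$ from separately prescribed divergence and curl is ever needed. Your localized Biot--Savart plus commutator-absorption fix is plausible but amounts to additional machinery (and possibly an extra fixed point) that the dual argument avoids entirely; it would also have to cope with the mean-value compatibility conditions on the torus when inverting $\div$ and $\operatorname{curl}$. If you want to follow your route, that reconstruction step is the only place where a real new argument is required --- the bootstrap and weight bookkeeping you describe are otherwise sound.
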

\begin{proof}
	We first recover observability estimates for \eqref{Navier-Stokes-Adjoint-Mu-Sigma}. We write down
	\begin{multline*}
		\norm{(\sigma e^{- s \Phi}, q e^{- s \Phi})}_{L^2(H^{-2})\times L^2(H^{-2})} + \norm{(\sigma(0) e^{-s \Phi(0)}, q(0) e^{-s \Phi(0)})}_{H^{-2}\times H^{-3}}
		\\
		= 
		\sup_{\substack{ \norm{(f_r e^{s \Phi}, f_y e^{s \Phi})}_{L^2(H^{2})\times L^2(H^{2})} \leq 1\\ \norm{(r_0 e^{s \Phi(0)}, y_0 e^{s \Phi(0)})}_{H^2 \times H^3} \leq 1}} 
		\langle (f_r, f_y), (\sigma, q)  \rangle_{L^2(H^2), L^2(H^{-2})} + \langle (r_0, y_0), (\sigma(0), q(0)) \rangle_{H^2 \times H^3, H^{-2} \times H^{-3}} 
	\end{multline*}
But by construction, if we associate to $(r_0, y_0) \in H^2(\T_L) \times H^3(\T_L)$ and $f_r, f_y$ satisfying $f_r e^{s \Phi}, f_y e^{s \Phi} \in L^2(0,T;H^2(\T_L))$ the controlled trajectory of \eqref{Control-ReducedEq} in Theorem \ref{Thm-Main-Control-Reduced}, then we get:
	\begin{multline*}
		\langle (f_r, f_y), (\sigma, q)  \rangle_{L^2(H^2), L^2(H^{-2})} + \langle (r_0, y_0), (\sigma(0), q(0)) \rangle_{H^2 \times H^3, H^{-2} \times H^{-3}} 
		\\
		= 
		\langle (g_\sigma, \hbox{div} g_z + \frac{\overline{\rho}^2}{\nu} g_\sigma), (r,y) \rangle_{L^2(H^{-2})\times L^2(H^{-4}), L^2(H^2) \times L^2(H^4)}
		\\
		+
		\langle (\sigma, q), \chi_0 (v_r, v_y) \rangle_{L^2(H^{-2}), L^2(H^2)}.
	\end{multline*}
	Using \eqref{Est-reg-fix-point}, we obtain:
	\begin{multline}
		\norm{(\sigma e^{- s \Phi}, q e^{- s \Phi})}_{L^2(H^{-2})\times L^2(H^{-2})} + \norm{(\sigma(0) e^{-s \Phi(0)}, q(0) e^{-s \Phi(0)})}_{H^{-2}\times H^{-3}}
		\\
		\leq
		C \left(\norm{(g_\sigma e^{- 6s \Phi/7}, g_z e^{- 6s \Phi/7})}_{L^2(H^{-2})\times L^2(H^{-3})} 
		+
		\norm{\chi_0(\sigma e^{-6 s\Phi/7} ,q e^{-6 s \Phi/7} )}_{L^2(H^{-2})\times L^2(H^{-2})}\right).
	\end{multline}
	
Then we use the equation of $z$ in \eqref{Navier-Stokes-Adjoint} to recover estimates on $z$, that we rewrite as follows:
	$$
	\ds - \overline{\rho} (\partial_{t} z + \overline{u} \cdot \nabla z )  - \mu \Delta z 
	 =g_z+ \overline{\rho} \left( 1 - \frac{\lambda+ \mu}{\nu}\right) \nabla \sigma + \frac{\lambda + \mu}{\nu} \nabla q, \text{ in } (0,T) \times \T_L.
	$$
Then we use the duality with the following controllability problem for the heat equation: 
	\begin{equation} \label{Heat-control-dual-z}
	\left\{
		\begin{array}{ll}
			\ds \overline{\rho}( \partial_t  y + \overline{u} \cdot \nabla y )- \Delta y = \tilde{f}_{y}  + v_y {\chi}_{0}, \quad & \hbox{ in }(0,T) \times \T_L,
			\\
			y(0,\cdot) = y_0, \quad y(T, \cdot) = 0 \quad & \hbox{ in } \T_L.
		\end{array}
	\right.
	\end{equation}
	Replacing $s$ by $7s/6 $ in Proposition \ref{Prop-Reg-heat} items 1 \& 2, we get that if $y_0 \in H^2(\T_L)$ and $\tilde f_y e^{7 s \Phi/6} \in L^2(0,T;H^1(\T_L))$, then there exists a controlled trajectory $y$ satisfying \eqref{Heat-control-dual-z} with control $v_y$ with 
	$$
		\norm{y e^{s \Phi}}_{L^2(H^3)} + \norm{v_y e^{s \Phi} }_{L^2(H^1)} \leq C \norm{\tilde f_y e^{7s\Phi/6} }_{L^2(H^1)} + C \norm{y_0 e^{7 s \Phi(0)/6}}_{H^2}.
	$$
	 Arguing by duality, we thus obtain
	\begin{align*}
		&\norm{z e^{ -7 s \Phi/6}}_{L^2(H^{-1})} + \norm{ z(0)e^{-7 s \Phi(0)/6}}_{H^{-2}}
		\\
		& \leq 
		C
		\left(
		\norm{\chi_0 z e^{-s \Phi}}_{L^2(H^{-1})}
		+ 
		\norm{(\sigma,q) e^{- s \Phi}}_{L^2(H^{-2})}
		+
		\norm{g_z e^{- s \Phi}}_{L^2(H^{-3})}
		\right)
		\\
		&\leq
		C \left(
		\norm{(g_\sigma e^{- 6s \Phi/7}, g_z e^{- 6s \Phi/7})}_{L^2(H^{-2})\times L^2(H^{-3})} 
		+
		\norm{\chi_0(\sigma e^{-6s\Phi/7} ,q e^{-6 s \Phi/7} )}_{L^2(H^{-2})\times L^2(H^{-2})}
		\right.
		\\
		& \qquad 
		\left.
		+ \norm{\chi_0 z e^{-s \Phi}}_{L^2(H^{-1})}\right).
	\end{align*}
As $\chi = 1$ in $\hbox{Supp}\, \chi_0$ (recall \eqref{Def-Chi0}), we have $\chi_{0} \chi = \chi_{0}$ and and $\chi_{0} \div z =\chi_{0} \div (\chi z)$.
Now using that $\chi_{0}$ is a multiplier on $H^{-2}$ we get
	\begin{align*}
		\norm{ \chi_0 q e^{-6 s \Phi/7} }_{L^2(H^{-2})} 
		& \leq
		C \norm{ \chi_0 \div z e^{-6 s \Phi/7} }_{L^2(H^{-2})} + C \norm{ \chi_0 \sigma e^{-6 s \Phi/7} }_{L^2(H^{-2})}
		\\
		& \leq C \norm{ \chi z e^{-6 s \Phi/7} }_{L^2(H^{-1})} + C \norm{ \chi \sigma e^{-6 s \Phi/7} }_{L^2(H^{-2})},
	\end{align*}
	and combining the above results, we obtain
	\begin{multline}
		\norm{\sigma e^{- s \Phi}}_{L^2(H^{-2})} + \norm{\sigma(0) e^{-s \Phi(0)}}_{H^{-2}}
		+
		\norm{z e^{ -7 s \Phi/6}}_{L^2(H^{-1})} + \norm{ z(0)e^{-7 s \Phi(0)/6}}_{H^{-2}} 
		\\
		\leq
		C \norm{(g_\sigma e^{- 6s \Phi/7}, g_z e^{- 6s \Phi/7})}_{L^2(H^{-2})\times L^2(H^{-3})} 
		 +
		 C \norm{ \chi( \sigma, z) e^{-6 s \Phi/7} }_{L^2(H^{-2})\times L^2(H^{-1})}.
	\end{multline}
Using that $(\sigma, z)$ satisfies Equation~\eqref{Navier-Stokes-Adjoint}, we again argue by duality to deduce that System~\eqref{Navier-Stokes-Linear} is controllable and the estimate \eqref{Estimate-Control-Linear-1} follows immediately.
	\\
	To conclude \eqref{Estimate-Control-Linear-2}, we look at the equations satisfied by $\rho e^{5 s \Phi/6}$ and $u e^{5s \Phi/6}$ and perform regularity estimates on each equation. To estimate the regularity of $\rho e^{5s \Phi/6}$, as $\Phi$ does not satisfy the transport equation anymore ($\Phi$ is independent of the space variable $x$), this induces a small loss in the parameter $s$, which is reflected by the fact that we estimate $\rho e^{5s \Phi/6}$ instead of $\rho e^{6s\Phi/7}$. This is similar for the estimate on the velocity field $u$.
\end{proof}
\section{Proof of Theorem \ref{Thm-Main}}\label{Sec-Proof-Main}
In this section, we fix the parameter $s = s_0$ so that Theorem \ref{Thm-Control-NS-linear} applies.
We introduce the set on which the fixed point argument will take place:
\begin{align*}
	\mathscr{C}_{R} = \{(\rho,u) \hbox{ with }  & \rho \in L^\infty(0,T; H^2(\T_L)) \cap H^1 (0,T; L^2(\T_L)) 
	\\
	\qquad & u \in L^2(0,T; H^3(\T_L))\cap L^\infty(0,T;H^2(\T_L)) \cap H^1(0,T;H^1(\T_L)), 
	\\
	&\norm{(\rho e^{5 s  \Phi/6}, u e^{5 s \Phi/6} )}_{(L^\infty(H^2) \cap H^1 (L^2))\times (L^2(H^3)\cap L^\infty(H^2) \cap H^1(H^1))}  \leq R\}.
\end{align*}
The precise definition of our fixed point map is then given as follows:
\begin{equation}
	\label{Def-FixedPointMap}
	\mathscr{F}(\widehat  \rho, \widehat  u) = \mathscr{G} (\widehat  \rho_0, \widehat  u_0,f_\rho(\widehat  \rho, \widehat  u), f_u(\widehat  \rho, \widehat  u)), 
\end{equation}
where $\mathscr{G}$ is defined in Theorem \ref{Thm-Control-NS-linear}, $(\widehat  \rho_0, \widehat  u_0)$ is defined in \eqref{Dico-Tilde-rho-u-0-rho-u}  and  $f_\rho (\widehat  \rho,\widehat  u), f_u (\widehat  \rho,\widehat  u)$ are defined in \eqref{SourceTermRho-hat}--\eqref{SourceTermU-hat}. Therefore, our first goal is to check that $\mathscr{F}$ is well-defined on $\mathscr{C}_R$, and for that purpose, we shall in particular show that, for $(\widehat  \rho, \widehat  u) \in \mathscr{C}_R$ one has $\widehat  \rho_0 e^{s \Phi(0)} \in H^2(\T_L)$, $\widehat  u_0 e^{7 s \Phi(0)/6} \in H^2(\T_L)$, $ f_\rho (\widehat  \rho,\widehat  u) e^{s \Phi} \in L^2(0,T; H^2(\T_L))$ and $ f_u (\widehat  \rho,\widehat  u) e^{7s \Phi/6} \in L^2(0,T; H^1(\T_L))$.
\subsection{The map $\mathscr{F}$ in (\ref{Def-FixedPointMap}) is well-defined on $\mathscr{C}_R$}\label{Subsec-WellDefinedMap}
In order to show that $\mathscr{F}$ is well-defined, we first study the maps $\widehat Y$ and $\widehat Z$ defined in \eqref{Def-Hat-Y}--\eqref{Def-Hat-Z} and prove some of their properties, in particular that they are close to the identity map. We can then define the source term $ f_\rho (\widehat  \rho,\widehat  u),  f_u (\widehat  \rho,\widehat  u) $ and the initial data $(\widehat \rho_0, \widehat u_0)$. Accordingly, we will deduce that the map $\mathscr{F}$ in (\ref{Def-FixedPointMap}) is well-defined on $\mathscr{C}_R$ for $R>0$ small enough.
\subsubsection{Estimates on $\widehat Y$ and $\widehat Z$ in (\ref{Def-Hat-Y})--(\ref{Def-Hat-Z})}
We start with the following result:
\begin{proposition}
	\label{Prop-Hat-Y}
	Let $\widehat u \in L^2(0,T; H^3(\T_L)) \cap H^1(0,T; H^1(\T_L))$ with 
	\begin{equation}
		\label{Estimate-Hat-u}
		\norm{\widehat u e^{5 s \Phi/6}}_{  L^2(0,T; H^3(\T_L)) \cap H^1(0,T; H^1(\T_L))}  \leq R.
	\end{equation}
	Then the map $\widehat Y$ defined in \eqref{Def-Hat-Y} satisfies, for some constant $C$ independent of $R>0$:
	\begin{equation}
		\label{Est-I-hat-Y}
		\norm{ (\widehat Y(t,x)- x) e^{ 5s \Phi(t)/12}}_{C^0([0,T]; H^3(\T_L))} + 
		\norm{ (\widehat Y(t,x)- x) e^{ s \Phi(t)/3}}_{C^{1}([0,T]; H^2(\T_L))} \leq C R. 
	\end{equation}
	Therefore, there exists $R_0 \in (0,1)$ such that for all $R \in (0, R_0)$ the map $\widehat Z$ defined in \eqref{Def-Hat-Z} is well-defined and satisfies 
	\begin{align}
		\label{Est-I-hat-Z-1}
		& \norm{ (D\widehat Z(t,\widehat Y(t,x)) - I) e^{ 5s \Phi(t)/12}}_{C^0([0,T]; H^2(\T_L))}
		\leq C R, 
		\\
		\label{Est-I-hat-Z-2}
		& \norm{ (D\widehat Z(t,\widehat Y(t,x)) - I) e^{ s \Phi(t)/3}}_{W^{1/4,5}(0,T;H^{7/4}(\T_L))} \leq C R,
		\\
		\label{Est-D-2-hat-Z}
		&
		\norm{ D^2 \widehat Z(t,\widehat Y(t,x)) e^{5 s \Phi(t)/12} }_{L^\infty(0,T; H^1(\T_L))} \leq C R,
	\end{align}
	and 
	\begin{equation}
		\label{ConditionSupport}
		\chi( \widehat Z(t,x)) = 0 \hbox{ for all } (t,x) \in [0,T] \times \overline\Omega, 
	\end{equation}
	where $\chi$ is defined by \eqref{Def-Chi}. 
\end{proposition}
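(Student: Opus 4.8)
The plan is to first establish the estimates on $\widehat Y - x$ directly from the transport equation \eqref{Def-Hat-Y}, and then to deduce everything about $\widehat Z$ by the inverse function theorem applied uniformly in $t$. Write $W(t,x) := \widehat Y(t,x) - x$, which solves $\partial_t W + \overline u \cdot \nabla W = \widehat u(t, x + W)$ with $W(T,\cdot) = 0$. Since $\Phi$ is independent of $x$ but only constant along the target characteristics up to the loss already baked into the statement (we compare $e^{5s\Phi/12}$, $e^{s\Phi/3}$ against $e^{5s\Phi/6}$), I would set $W_\ast := W e^{5 s \Phi/12}$, which solves a transport equation with right-hand side $e^{5s\Phi/12}\widehat u(t,x+W) + \tfrac{5}{12} s (\partial_t\Phi)\, W_\ast$. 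On the time interval $[T-2T_1, T)$ the factor $\partial_t \Phi \ge 0$, which is the good sign for a backward transport equation, while on $[0, T-2T_1]$ one has $\partial_t \Phi \le 0$; as in the proof of Theorem~\ref{Thm-Trans-Control}, energy estimates (and their higher-order analogues obtained by differentiating in $x$, using that $\overline u \cdot \nabla$ is skew-adjoint and that composition with the near-identity map $x \mapsto x + W$ is harmless once $\|W\|_{C^0(W^{1,\infty})}$ is small) then give the $C^0([0,T];H^3)$ bound; the $C^1([0,T];H^2)$ bound follows by reading $\partial_t W = -\overline u \cdot \nabla W + \widehat u(t, x+W)$ and using $\widehat u \in H^1(0,T;H^1) \hookrightarrow C^0([0,T];H^1)$ together with the $H^3$ control just obtained (here the slightly larger exponent in the weight, $e^{s\Phi/3}$ versus $e^{5s\Phi/12}$, gives the room needed to absorb $\partial_t\Phi$-type terms and the loss coming from $\widehat u$ being measured with $e^{5s\Phi/6}$ only in $H^1$ in time). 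This yields \eqref{Est-I-hat-Y} with a constant $C$ independent of $R$ since the estimate is linear in $\widehat u$ modulo the near-identity composition, which is controlled once $CR$ is below a fixed threshold.

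Next, fixing $R_0$ small enough that $\|W(t,\cdot)\|_{C^1(\T_L)} \le 1/2$ for all $t$, the map $\widehat Y(t,\cdot) = \Id + W(t,\cdot)$ is a $C^1$-diffeomorphism of $\T_L$ for each $t$ by the global inverse function theorem on the torus, so $\widehat Z(t,\cdot)$ is well-defined; moreover $\widehat Y \in C^0([0,T];H^3)$ with $H^3 \hookrightarrow C^1$ in dimension $d \le 3$ gives enough regularity to invert. To get \eqref{Est-I-hat-Z-1} and \eqref{Est-D-2-hat-Z} I would differentiate the identity $\widehat Z(t, \widehat Y(t,x)) = x$: this gives $D\widehat Z(t, \widehat Y(t,x)) = (D\widehat Y(t,x))^{-1} = (I + DW(t,x))^{-1}$, hence $D\widehat Z(t,\widehat Y(t,x)) - I = -(I+DW)^{-1} DW$, and one expands the Neumann series $(I+DW)^{-1} = \sum_{k\ge 0}(-DW)^k$, which converges in $H^2$ (an algebra for $d \le 3$) since $\|DW\|_{H^2}$ is small; this transfers the $e^{5s\Phi/12} H^2$ bound on $DW$ to $D\widehat Z \circ \widehat Y - I$. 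Differentiating once more, $D^2\widehat Z \circ \widehat Y$ is a sum of products of $(I+DW)^{-1}$ factors (bounded in $H^1$, an algebra) with one factor of $D^2 W$, giving \eqref{Est-D-2-hat-Z}. For the time-regularity statement \eqref{Est-I-hat-Z-2}, I would interpolate: $D\widehat Z\circ\widehat Y - I$ is, via the same algebraic expression, a smooth function of $DW$, and $DW$ lies in $C^0([0,T];H^2) \cap C^1([0,T];H^1)$ (from the two bounds in \eqref{Est-I-hat-Y}), so by standard real interpolation $W^{1/4,5}(0,T;H^{7/4})$ sits between these with a fixed gain, and composition with a smooth function preserves this scale of spaces once the $C^0(W^{1,\infty})$ norm is small; the weight exponent $e^{s\Phi/3}$ is chosen precisely so that it is dominated by both $e^{5s\Phi/12}$ and by the weight appearing in the $C^1 H^2$ bound.

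Finally, \eqref{ConditionSupport} is a purely geometric consequence of the smallness of $\widehat Y - x$ and the choice of constants in \eqref{Choice-t-0-t-1-eps} and \eqref{Def-Chi0}: for $R_0$ small, $\|\widehat Z(t,\cdot) - \Id\|_{C^0(\T_L)}$ is as small as we wish (again by inverting $\widehat Y = \Id + W$), so $\widehat Z(t, \overline\Omega)$ stays within an $\varepsilon$-neighborhood of $\overline\Omega$, where $\chi$ vanishes by \eqref{Def-Chi}. I expect the main obstacle to be bookkeeping the weights carefully — making sure that every time we pass from $\widehat u$ (weighted by $e^{5s\Phi/6}$) to $W$, then to $DW$, $D^2W$, then through the Neumann series and the interpolation, the losses in the exponent of $\Phi$ never exceed the margins $5/12$, $1/3$ actually claimed — and, secondarily, in verifying that the nonlinear composition $x \mapsto \widehat u(t, x + W(t,x))$ is estimated in $H^3$ with a constant uniform in $R$, which requires the standard Moser-type product and composition estimates together with the a priori smallness $\|W\|_{C^1} \le 1/2$.
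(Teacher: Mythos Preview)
You have misread equation \eqref{Def-Hat-Y}. Setting $W = \widehat Y - x$ and substituting into $\partial_t \widehat Y + \overline u \cdot \nabla \widehat Y = \overline u + \widehat u$ gives
\[
\partial_t W + \overline u \cdot \nabla W = \widehat u(t,x),
\]
with $\widehat u$ evaluated at $x$, \emph{not} at $x+W$. The equation is linear, and in fact has the explicit solution $W(t,x) = -\int_t^T \widehat u(\tau, x+(\tau-t)\overline u)\,d\tau$. This is the whole point of the change of variable \eqref{Dico-rho-u-tilde-rho-u}: it has already absorbed the composition with the flow, so that by the time one reaches \eqref{Def-Hat-Y} there is no nonlinear composition left. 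Your anticipated ``main obstacle'' of estimating $\widehat u(t, x+W)$ in $H^3$ via Moser-type composition estimates simply does not arise, and the bootstrap you sketch (needing a priori smallness of $\|W\|_{C^1}$ before you can even run the energy estimate) is unnecessary. The weighted $C^0(H^3)$ bound on $W$ then follows directly from the explicit formula together with the elementary fact $\Phi(t) \le 2\min_{\tau\in[t,T]}\Phi(\tau)$, which converts the weight $e^{5s\Phi/6}$ on $\widehat u$ into $e^{5s\Phi/12}$ on $W$; the $C^1(H^2)$ bound is read off the equation itself.

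Once this is corrected, the remainder of your plan matches the paper's proof closely: the Neumann series $(I+DW)^{-1} = \sum_k (-DW)^k$ in the algebra $C^0(H^2)$ gives \eqref{Est-I-hat-Z-1}; interpolating $DW \in C^0(H^2)\cap C^1(H^1)$ into $W^{1/4,5}(H^{7/4})$ (again an algebra) and rerunning the Neumann series gives \eqref{Est-I-hat-Z-2}; writing $D^2\widehat Z\circ\widehat Y = D(D\widehat Z\circ\widehat Y)\cdot D\widehat Z\circ\widehat Y$ gives \eqref{Est-D-2-hat-Z}; and \eqref{ConditionSupport} follows from $\|\widehat Z(t,\cdot)-\Id\|_{L^\infty} = \|\widehat Y(t,\cdot)-\Id\|_{L^\infty} \le CR$ together with the $\varepsilon$-margin in \eqref{Def-Chi}.
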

\begin{proof}
	Let us consider the equation satisfied by the map 
	$$
		\widehat\delta(t,x) = \widehat Y(t,x) -x.
	$$ 
	Using \eqref{Def-Hat-Y}, direct computations show that $\widehat\delta$ satisfies
	\begin{equation}
		\label{Eq-Delta}
		\partial_t \widehat\delta + \overline u \cdot \nabla \widehat\delta = \widehat u, \hbox{ in } (0,T) \times \T_L, \quad \widehat\delta(T,x) = 0, \hbox{ in } \T_L.
	\end{equation}
	Therefore, we immediately get that 
	$$
		\widehat\delta(t,x) = - \int_t^T \widehat u(\tau, x + (\tau - t) \overline u) \, d\tau.
	$$
	As for all $t \in [0,T]$, 
	$$
		\Phi(t) \leq 2 \min_{\tau \in [t,T]} \{ \Phi(\tau)\},
	$$
	we deduce from the above formula and \eqref{Estimate-Hat-u} that
	$$
		\norm{ \widehat\delta e^{ 5s \Phi/12}}_{C^0(H^3)} \leq C R. 
	$$
	Using Equation \eqref{Eq-Delta} and the bound \eqref{Estimate-Hat-u}, we also derive
	$$
		\norm{ \partial_t \widehat\delta e^{ 5s \Phi/12}}_{C^{0}(H^2)} \leq C R, 
	$$
	from which, together with $|s\partial_t \Phi| \leq C e^{s \Phi/12}$ independently of $s$, we immediately deduce \eqref{Est-I-hat-Y}.
	\par
	In particular, 
	$$
		\norm{D \widehat\delta}_{C^0(H^2)} \leq CR, 
	$$
	so that for $R$ small enough, for all $(t,x) \in [0,T] \times \T_L$, $D\widehat Y(t,x) = I + D \widehat\delta(t,x)$ is invertible. Consequently, $\widehat Z$ defined by \eqref{Def-Hat-Z} is well-defined by the inverse function theorem (note that $C^0(H^2) \subset C^0(C^0)$ in dimension $d \leq 3$), and $\widehat Z \in C^0(C^1)$ with 
	$$
		\norm{\widehat Z}_{C^0(C^1)} \leq C R.
	$$ 
	In order to get estimates on $\widehat Z$ in weighted norms, we start from the formula $\widehat Z(t,\widehat Y(t,x)) = x$ and differentiate it with respect to $x$: We obtain, for all $t \in [0,T]$ and $x \in \T_L$, 
	\begin{equation}
		\label{Identity-Jacobians}
		D\widehat Z(t,\widehat Y(t,x)) D\widehat Y(t,x) = I, 
	\end{equation}
	i.e.
	$$
		D \widehat Z(t,\widehat Y(t,x)) (I + D \widehat \delta(t,x)) = I. 
	$$
	Therefore, we can write
	$$
		D \widehat Z(t,\widehat Y(t,x)) = I + \sum_{n= 1}^\infty (-1)^n (D \widehat \delta(t,x))^n. 
	$$
	Using then that $C^0([0,T]; H^2(\T_L))$ is an algebra (as $d \leq 3$) and that $e^{5 s \Phi/12} \geq 1$, 
	\begin{multline*}
		\norm{ (D \widehat Z(t, \widehat Y(t,x))  - I) e^{5s \Phi/12}}_{C^0(H^2)}
		\leq
		\sum_{n = 1}^\infty C^{n-1} \norm{D \widehat \delta e^{5s\Phi/12}}_{C^0(H^2)}^n
		\\
		\leq
		\sum_{n=1}^\infty C^{n-1} (CR)^n \leq \frac{CR}{1 - C^2 R} \leq 2 CR, 
	\end{multline*}
	for $R$ small enough ($R \leq 1/2 C^2$), i.e. \eqref{Est-I-hat-Z-1}. 
	\par
	By interpolation, we have 
	$$
		\norm{D \widehat \delta e^{s \Phi/3}}_{W^{1/4, 5}(H^{7/4})} \leq 
		C
		\norm{D \widehat \delta e^{s \Phi/3}}_{C^1(H^{1})}^{1/4}
		\norm{D \widehat \delta e^{s \Phi/3}}_{C^0(H^{2})}^{3/4}
		\leq CR.
	$$
	We now remark that $W^{1/4,5}(0,T; H^{7/4}(\T_L))$ is an algebra ($5 \times 1/4 > 1$ and $7/4 > d/2$), and we can therefore derive, similarly as above, that
	\begin{multline*}
		\norm{ (D \widehat Z(t, \widehat Y(t,x))  - I) e^{s \Phi/3}}_{W^{1/4, 5}(H^{7/4})}
		\leq
		\sum_{n = 1}^\infty C^{n-1} \norm{D \widehat \delta e^{s\Phi/3}}_{W^{1/4, 5}(H^{7/4})}^n 
		\norm{e^{-s\Phi/3}}_{W^{1/4, 5}(H^{7/4})}^{n-1} 
		\\
		\leq
		\sum_{n=1}^\infty C^{n-1} (CR)^n \leq \frac{CR}{1 - C^2 R} \leq 2 CR, 
	\end{multline*}
	which concludes the proof of \eqref{Est-I-hat-Z-2}.
	\par
	The proof of \eqref{Est-D-2-hat-Z} consists in writing
	$$
		D^2 \widehat Z(t, \widehat Y(t,x)) = D (D \widehat Z(t,\widehat Y(t,x))) (D\widehat Y(t,x))^{-1} 
		= 
		D (D \widehat Z(t,\widehat Y(t,x))) D \widehat Z(t,\widehat Y(t,x)), 	
	$$
	where the last identity comes from \eqref{Identity-Jacobians}. From \eqref{Est-I-hat-Z-1}, we have $D (D \widehat Z(t,\widehat Y(t,x))) e^{5 s \Phi/12} \in L^\infty(H^1)$ and $D \widehat Z(t,\widehat Y(t,x)) e^{5 s \Phi/12} \in L^\infty(H^2)$, hence \eqref{Est-D-2-hat-Z} easily follows as the product of a function in $H^1(\T_L)$ by a function in $H^2(\T_L)$ belongs to $H^1(\T_L)$ ($d \leq 3$).
	\par
	We finally focus on the proof of \eqref{ConditionSupport}. As $\widehat Y(t,x) - x$ belongs to $L^\infty((0,T) \times \T_L)$ by \eqref{Est-I-hat-Y} and $\widehat Y(t,\cdot)$ is a $C^1$ diffeomorphism for all $t \in [0,T]$, 
	$$
		\norm{\widehat Z(t,x) - x}_{L^\infty(L^\infty)} 
		=
		\norm{ \widehat Z(t,\widehat Y(t,x)) - \widehat Y(t,x)}_{L^\infty(L^\infty)} 
		= 
		\norm{x - \widehat Y(t,x)}_{L^\infty(L^\infty)} \leq C R. 
	$$
	In particular, taking $R$ small enough ($R < \varepsilon/C$), condition \eqref{ConditionSupport} is obviously satisfied for $\chi$ defined in \eqref{Def-Chi}.
\end{proof}
\subsubsection{Estimates on $f_\rho(\widehat  \rho, \widehat  u), f_u(\widehat  \rho, \widehat  u)$}
\begin{lemma}
	\label{Lemma-Est-hat-f}
	Let $(\widehat  \rho,\widehat  u) \in \mathscr{C}_{R}$ for some $s \geq s_0$ and $R \in (0,R_0)$, where $R_0$ is given by Proposition \ref{Prop-Hat-Y}. Then we have the following estimate
	\begin{equation}
		\label{Estimate-Fs-hat}
		\norm{(f_\rho (\widehat  \rho,\widehat  u) e^{s \Phi}, f_u (\widehat  \rho,\widehat  u)e^{7 s \Phi/6})}_{L^2(0,T;H^2(\T_L)) \times L^2(0,T;H^1(\T_L))} 
		\leq
		C R^2,
	\end{equation}
	where $f_\rho (\widehat  \rho,\widehat  u), f_u (\widehat  \rho,\widehat  u)$ are defined in \eqref{SourceTermRho-hat}--\eqref{SourceTermU-hat}.
\end{lemma}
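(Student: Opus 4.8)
The plan is to estimate each of the terms appearing in the explicit expressions \eqref{SourceTermRho-hat} and \eqref{SourceTermU-hat} by combining three ingredients: (i) the algebra property of the spaces $C^0([0,T];H^2(\T_L))$ and $L^\infty(0,T;H^1(\T_L))$ in dimension $d\le 3$, together with the product rule $H^2\cdot H^1\hookrightarrow H^1$; (ii) the weighted bounds on $(\widehat\rho,\widehat u)$ coming from $(\widehat\rho,\widehat u)\in\mathscr{C}_R$, namely $\|\widehat\rho e^{5s\Phi/6}\|_{L^\infty(H^2)\cap H^1(L^2)}\le R$ and $\|\widehat u e^{5s\Phi/6}\|_{L^2(H^3)\cap L^\infty(H^2)\cap H^1(H^1)}\le R$; and (iii) the weighted bounds on $\widehat Y-\mathrm{Id}$, $D\widehat Z-I$, and $D^2\widehat Z$ from Proposition \ref{Prop-Hat-Y}, in particular \eqref{Est-I-hat-Z-1} and \eqref{Est-D-2-hat-Z}. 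The key bookkeeping point is that each summand is quadratic: it is a product of one factor carrying a weight $e^{5s\Phi/6}$ (a derivative of $\widehat\rho$ or $\widehat u$, or the difference $p(\overline\rho+\widehat\rho)-p'(\overline\rho)\widehat\rho$ which is $O(\widehat\rho^2)$) times a second small factor also carrying a weight (another derivative of $\widehat u$, or $D\widehat Z-I$, or $D^2\widehat Z$, or again $\widehat\rho$). Multiplying the weights, $e^{5s\Phi/6}\cdot e^{5s\Phi/12}\ge e^{s\Phi}$ and, when both small factors involve $\widehat\rho$ or $\widehat u$, $e^{5s\Phi/6}\cdot e^{5s\Phi/6}\ge e^{7s\Phi/6}$; since $\Phi\ge 1$ one can always absorb the remaining weight into the better one, which yields the claimed exponents $e^{s\Phi}$ for $f_\rho$ and $e^{7s\Phi/6}$ for $f_u$.

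Concretely, for $f_\rho(\widehat\rho,\widehat u)$ one writes $f_\rho e^{s\Phi} = -(\widehat\rho e^{5s\Phi/6})\,(D\widehat Z^t(t,\widehat Y(t,\cdot)))\!:\!(D\widehat u\, e^{5s\Phi/6})\cdot e^{-2s\Phi/3} - \overline\rho\,\bigl((D\widehat Z^t-I)e^{5s\Phi/12}\bigr)\!:\!\bigl(D\widehat u\, e^{5s\Phi/6}\bigr)\cdot e^{-s\Phi/4}$; the first term is controlled in $L^\infty(0,T;H^2)$ hence in $L^2(0,T;H^2)$ by $CR^2$ using that $D\widehat Z$ is bounded in $C^0(H^2)$ (from \eqref{Est-I-hat-Z-1}, $D\widehat Z=I+O(R)$ in that norm) and the algebra property, and the second likewise using \eqref{Est-I-hat-Z-1}; the negative powers of $e^{s\Phi}$ are harmless since $\Phi\ge 1$. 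For $f_u(\widehat\rho,\widehat u)$, the most delicate lines are the ones involving the second derivatives $\partial_{k,\ell}\widehat u_i$ and $\Delta\widehat Z$. One places $\partial_{k,\ell}\widehat u$ in $L^2(0,T;H^1)$ (using $\widehat u\in L^2(H^3)$) and the two factors $(\partial_j\widehat Z_k-\delta_{j,k})$ in $C^0(H^2)$ by \eqref{Est-I-hat-Z-1}, so the product is in $L^2(H^1)$ with norm $\le CR^3\le CR^2$; the term with $\partial_k\widehat u_i\,\Delta\widehat Z_k$ uses $\Delta\widehat Z\in L^\infty(H^1)$ from \eqref{Est-D-2-hat-Z}, $\partial_k\widehat u\in L^2(H^2)$, and the product rule $H^2\cdot H^1\hookrightarrow H^1$; the term $\partial_{i,j}\widehat Z_k\,\partial_k\widehat u_j$ is handled the same way. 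The pressure term $\partial_i\widehat Z_j(t,\widehat Y)\,\partial_j(p(\overline\rho+\widehat\rho)-p'(\overline\rho)\widehat\rho)$ uses that $p$ is $C^3$ near $\overline\rho$ so $p(\overline\rho+\widehat\rho)-p'(\overline\rho)\widehat\rho = \tfrac12 p''(\overline\rho)\widehat\rho^2 + O(\widehat\rho^3)$, hence its gradient is $O(\widehat\rho\,D\widehat\rho)=O(R^2)$ in $L^\infty(H^1)$ by the algebra property, and multiplication by the bounded $D\widehat Z$ keeps it there; the remaining term $(\partial_i\widehat Z_j-\delta_{i,j})\partial_j\widehat\rho$ combines \eqref{Est-I-hat-Z-1} with $\widehat\rho\in L^\infty(H^2)$. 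The first term $-\widehat\rho(\partial_t\widehat u_i+\overline u\cdot\nabla\widehat u_i)$ uses $\widehat\rho\in L^\infty(H^2)\cap H^1(L^2)$ and $\partial_t\widehat u,\nabla\widehat u\in L^2(H^1)$: the product $\widehat\rho\cdot\partial_t\widehat u$ is in $L^2(H^1)$ since $H^2\cdot H^1\hookrightarrow H^1$, and $\widehat\rho\cdot\overline u\cdot\nabla\widehat u$ is in $L^\infty(H^1)$. In every case the weight splitting described above produces exactly a gain of $e^{s\Phi}$ for the $f_\rho$ estimate and $e^{7s\Phi/6}$ for the $f_u$ estimate, with a prefactor quadratic (or better, cubic) in $R$.

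I expect the main obstacle to be purely technical rather than conceptual: carefully checking for each of the roughly ten summands that the Sobolev indices and the time-integrability indices match a valid product (Hölder) inequality — in particular that the second-order-derivative terms genuinely close in $L^2(0,T;H^1)$ and not merely $L^2(0,T;L^2)$, which is where the choice of putting one factor of $D\widehat Z-I$ in the algebra $C^0(H^2)$ (rather than in a weaker norm) is essential — and that the composition with the near-identity diffeomorphism $\widehat Y(t,\cdot)$ does not degrade these norms, which follows from the $C^0(H^3)$ bound on $\widehat Y-\mathrm{Id}$ in \eqref{Est-I-hat-Y} and a standard change-of-variables estimate for Sobolev norms under $C^1$-close-to-identity diffeomorphisms. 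Once the index bookkeeping and the composition estimate are in place, summing the contributions gives \eqref{Estimate-Fs-hat}.
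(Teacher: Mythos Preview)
Your proposal is correct and follows essentially the same route as the paper: term-by-term estimation using the algebra property of $H^2(\T_L)$, the product rule $H^2\cdot H^1\hookrightarrow H^1$, the weighted bounds from $\mathscr{C}_R$, and Proposition~\ref{Prop-Hat-Y}, with the key weight bookkeeping $5/6+5/12>7/6$. One minor slip: the first term of $f_\rho e^{s\Phi}$ is not in $L^\infty(0,T;H^2)$ since $D\widehat u\, e^{5s\Phi/6}$ is only in $L^\infty(H^1)$, but it lands directly in $L^2(0,T;H^2)$ via $\widehat u\, e^{5s\Phi/6}\in L^2(H^3)$, which is exactly what the paper does and all that is needed.
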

\begin{proof}
	In the following, we will repeatedly use the estimates derived in Proposition \ref{Prop-Hat-Y} and the crucial remark that $5/6 + 5/12 > 7/6$.
	\par
\begin{itemize}
\item {\bf Concerning $f_\rho(\widehat  \rho, \widehat  u)$.} We perform the following estimates: as $H^2(\T_L)$ is an algebra in dimension $d \leq 3$, 
	\begin{multline*}
		\norm{(\widehat  \rho D\widehat Z^t(t,\widehat Y(t,x)) :D \widehat  u) e^{s \Phi}}_{L^2(H^2)} 
		\\
		\leq 
		C \norm{\widehat  \rho e^{5s \Phi/6}}_{L^\infty(H^2)} \norm{D\widehat  Z(t,\widehat Y(t,x))}_{L^\infty(H^2)} \norm{\widehat  u e^{5 s  \Phi/6}}_{L^2(H^3)} \leq C R^2,
	\end{multline*}
	and
	\begin{multline*}
		\norm{\overline \rho ( D \widehat Z^t(t,\widehat Y(t,x)) - I) :D\widehat u e^{s \Phi}}_{L^2(H^2)} 
		\\
		\leq
		C \norm{ ( D \widehat Z(t,\widehat Y(t,x)) - I)e^{5s \Phi/12}}_{L^\infty(H^2)} \norm{ D \widehat u e^{5 s \Phi/6}}_{L^2(H^2)} \leq C R^2. 
	\end{multline*}
\item {\bf Concerning $f_u(\widehat  \rho, \widehat  u)$.} Using that the product is continuous from $H^2(\T_L) \times H^1(\T_L)$ into $H^1(\T_L)$,
	$$
		 \norm{\widehat  \rho \partial_t \widehat  u e^{ 7s  \Phi/6} }_{L^2(H^1)}
		\leq 
		 \norm{\widehat  \rho e^{5 s  \Phi/6} }_{L^\infty(H^2)} \norm{\partial_t \widehat  u e^{5 s  \Phi/6}}_{L^2(H^1)}
		\leq C R^2.
	$$
	Using again  that $H^2(\T_L)$ is an algebra, 
	$$
		\norm{\widehat  \rho \overline u \cdot \nabla \widehat  u  e^{ 7s  \Phi/6} }_{L^2(H^1)}
		\leq  
		C \norm{\widehat  \rho e^{5 s  \Phi/6}}_{L^\infty(H^2)}\norm{\widehat  ue^{5 s  \Phi/6}}_{L^2(H^3)} 
		\leq
		C R^2.
	$$
	For $i,j,k, \ell \in \{1, \cdots, d \}$, we get
	\begin{multline*}
		\norm{\partial_{k,\ell} \widehat u_j (\partial_j \widehat Z_{k} (t,\widehat Y(t,x)) - \delta_{j,k}) (\partial_i \widehat Z_{\ell}(t,\widehat Y(t,x)) - \delta_{i, \ell}) e^{7s \Phi/6}}_{L^2(H^1)}
		\\
		\leq
		C \norm{D^2 \widehat u e^{5s \Phi/6}}_{L^2(H^1)} \norm{(D\widehat Z(t,Y(t,x)) - I) e^{5s \Phi/12} }_{L^\infty(H^2)}^2
		\leq
		C R^3.
	\end{multline*}
	Similarly, for $i,j,k \in \{1, \cdots, d \}$, 
	$$
		\norm{ \partial_{i,j} \widehat Z_k(t, \widehat Y(t,x)) \partial_k \widehat u_j e^{7s \Phi/6}}_{L^2(H^1)}
		\leq
		C \norm{D^2\widehat Z(t,Y(t,x)) e^{5s \Phi/12} }_{L^\infty(H^1)} \norm{D u e^{5s \Phi/6}}_{L^2(H^2)}
		\leq 
		CR^2.
	$$
	In order to estimate the terms coming from the pressure, we write $p(\overline{\rho} + \widehat  \rho) - p(\overline\rho) - p'(\overline \rho) \widehat  \rho = \widehat  \rho^2 h(\widehat  \rho)$ where $h$ is a $C^1$ function depending on the pressure law (here we use that the pressure law $p$ belongs to $C^3$ locally around $\overline\rho$), so we have 
	$$
		\nabla (p(\overline{\rho} + \widehat \rho) - p'(\overline{\rho}) \widehat  \rho) = \big( 2 \widehat  \rho h(\widehat  \rho)  + {\widehat{\rho}}^{2} h'(\widehat{\rho}) \big) \nabla \widehat \rho.
	$$
	As $\norm{\widehat  \rho}_{L^\infty(L^\infty)} \leq CR \leq C$, we thus obtain, for $i,j \in \{1, \cdots, d\}$,
	\begin{multline*}
		\norm{\partial_i \widehat Z_j (t, \widehat Y(t,x)) \partial_j (p (\overline \rho + \widehat \rho) - p'(\overline \rho) \widehat \rho) e^{7s \Phi/6}}_{L^2(H^1)}
		\\
		\leq
		C \norm{D \widehat Z(t,\widehat Y)}_{L^\infty(H^2)} \norm{ \widehat \rho e^{5s\Phi/6}}_{L^2(H^2)}  \norm{\nabla \widehat \rho e^{5s\Phi/6}}_{L^2(H^1)}	
		\leq 
		C R^2.
	\end{multline*}
	For $i,j \in \{1, \cdots, d\}$,
	\begin{multline*}
		\norm{ p'(\overline \rho) (\partial_i \widehat Z_{j}(t,\widehat Y(t,x) - \delta_{i,j} ) \partial_j \widehat \rho e^{7 s \Phi/6}}_{L^2(H^1)}
		\\
		\leq
		 C \norm{(D \widehat Z(t,\widehat Y(t,x)) - I)e^{5s \Phi/12}}_{L^\infty(H^2)} \norm{\widehat  \rho e^{5 s \Phi/6}}_{L^2(H^2)}
		 \leq C R^2.
	\end{multline*}
	Combining all the above estimates yields Lemma \ref{Lemma-Est-hat-f}.
\end{itemize}
\end{proof}
\subsubsection{Estimates on $(\widehat\rho_0, \widehat u_0)$}
We finally state the following estimates on $(\widehat \rho_0, \widehat u_0)$ defined in \eqref{Dico-Tilde-rho-u-0-rho-u}:
\begin{lemma}
	\label{Lem-Estimates-Init}
	Let $\widehat u \in L^2(0,T; H^3(\T_L)) \cap H^1(0,T; H^1(\T_L))$ satisfying \eqref{Estimate-Hat-u} for some $R \leq R_0$ given by Proposition \ref{Prop-Hat-Y}. Let $\delta \in (0,1)$ and $(\check \rho_0, \check u_0) \in H^2(\T_L) \times H^2(\T_L)$ with 
	\begin{equation}
		\label{init-cond-torus}
		\norm{(\check \rho_0, \check u_0)}_{H^2(\T_L)\times H^2(\T_L)} \leq C_L \delta.
	\end{equation}
	Define $(\widehat \rho_0, \widehat u_0)$ as in \eqref{Dico-Tilde-rho-u-0-rho-u}. Then there exists a constant $C >0$ independent of $R$ such that 
	\begin{equation}
		\norm{(\widehat \rho_0, \widehat u_0)}_{H^2(\T_L)\times H^2(\T_L)} \leq C \delta.
	\end{equation}
\end{lemma}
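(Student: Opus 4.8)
The plan is to regard $\widehat\rho_0=\check\rho_0\circ\widehat Y(0,\cdot)$ and $\widehat u_0=\check u_0\circ\widehat Y(0,\cdot)$ as compositions of $H^2(\T_L)$ functions with the near-identity diffeomorphism $\widehat Y(0,\cdot)$, and to control these compositions by the chain rule together with the change-of-variables formula. First I would specialize Proposition~\ref{Prop-Hat-Y} to time $t=0$: setting $\widehat\delta(0,x)=\widehat Y(0,x)-x$, the bound \eqref{Est-I-hat-Y} together with $\Phi(0)\geq 1$ gives $\norm{\widehat\delta(0,\cdot)}_{H^3(\T_L)}\leq CR$, with $C$ independent of $R$ and $s$. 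In particular $D\widehat Y(0,\cdot)=I+D\widehat\delta(0,\cdot)$ with $\norm{D\widehat\delta(0,\cdot)}_{H^2(\T_L)}\leq CR$, hence $\norm{D\widehat\delta(0,\cdot)}_{L^\infty}\leq CR$ by Sobolev embedding ($d\leq 3$); shrinking $R_0$ if necessary, $\widehat Y(0,\cdot)$ is then a bi-Lipschitz diffeomorphism of $\T_L$ whose Jacobian determinant and its inverse both lie in $[1/2,2]$. Consequently, for every $p\in[1,\infty)$ one has the change-of-variables bound $\norm{g\circ\widehat Y(0,\cdot)}_{L^p(\T_L)}\leq 2^{1/p}\norm{g}_{L^p(\T_L)}$, and likewise for $\widehat Z(0,\cdot)$.

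Next I would differentiate, applying the argument to $f=\check\rho_0$ and to each component of $\check u_0$. By the chain rule, using $\partial_{ij}\widehat Y_k(0,\cdot)=\partial_{ij}\widehat\delta_k(0,\cdot)$,
\begin{align*}
\partial_i\big(f\circ\widehat Y(0,\cdot)\big) &= \sum_k \big(\partial_k f\big)\circ\widehat Y(0,\cdot)\,\partial_i\widehat Y_k(0,\cdot),\\
\partial_{ij}\big(f\circ\widehat Y(0,\cdot)\big) &= \sum_{k,\ell}\big(\partial_{k\ell}f\big)\circ\widehat Y(0,\cdot)\,\partial_i\widehat Y_k(0,\cdot)\,\partial_j\widehat Y_\ell(0,\cdot)+\sum_k\big(\partial_k f\big)\circ\widehat Y(0,\cdot)\,\partial_{ij}\widehat\delta_k(0,\cdot).
\end{align*}
The $L^2$ norms of $f\circ\widehat Y(0,\cdot)$, of the first-order term, and of the first Hessian sum are bounded directly by the change-of-variables inequality and $\norm{D\widehat Y(0,\cdot)}_{L^\infty}\leq 1+CR$, giving contributions $\leq C(1+CR)^2\norm{f}_{H^2}$. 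The only delicate term is the last sum: in dimension $3$, $\partial_{ij}\widehat\delta_k(0,\cdot)\in H^1(\T_L)$ is not in $L^\infty$, so I would use Hölder together with the borderline embeddings $H^1\hookrightarrow L^6\hookrightarrow L^3$ and $H^2\hookrightarrow W^{1,6}$,
\[
\norm{\big(\partial_k f\big)\circ\widehat Y(0,\cdot)\,\partial_{ij}\widehat\delta_k(0,\cdot)}_{L^2}\leq \norm{\partial_{ij}\widehat\delta_k(0,\cdot)}_{L^3}\norm{\big(\partial_k f\big)\circ\widehat Y(0,\cdot)}_{L^6}\leq C\norm{\widehat\delta(0,\cdot)}_{H^3}\norm{f}_{H^2}\leq CR\norm{f}_{H^2}.
\]
Summing over the finitely many indices yields $\norm{f\circ\widehat Y(0,\cdot)}_{H^2(\T_L)}\leq C(1+R)\norm{f}_{H^2(\T_L)}$ for $R\leq R_0$.

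Applying this with $f=\check\rho_0$ and with $f$ ranging over the components of $\check u_0$, and using \eqref{init-cond-torus} (that is, $\norm{(\check\rho_0,\check u_0)}_{H^2(\T_L)\times H^2(\T_L)}\leq C_L\delta$) together with $R\leq R_0<1$, gives $\norm{(\widehat\rho_0,\widehat u_0)}_{H^2(\T_L)\times H^2(\T_L)}\leq C\delta$ with $C$ independent of $R$, which is the claim. I expect the main (and essentially only) point requiring care to be the second-derivative term $(\nabla f)\circ\widehat Y(0,\cdot)\cdot D^2\widehat Y(0,\cdot)$ in dimension $d=3$, where neither factor is bounded and one must resort to the borderline Sobolev products above; everything else is a routine change of variables, the key being to keep all constants uniform for $R\leq R_0$ (and independent of $s$, since $e^{5s\Phi(0)/12}\geq 1$).
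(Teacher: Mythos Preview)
Your argument is correct and is precisely what the paper has in mind: the paper's proof is the single sentence ``It is a straightforward consequence of the estimate \eqref{Est-I-hat-Y} derived in Proposition~\ref{Prop-Hat-Y},'' and you have simply written out the routine chain-rule and change-of-variables details that this sentence is pointing to. Your handling of the one nontrivial product $(\nabla f)\circ\widehat Y(0,\cdot)\cdot D^2\widehat Y(0,\cdot)$ via the $L^3\times L^6$ H\"older/Sobolev estimate in dimension $d=3$ is exactly the kind of care needed to justify that sentence.
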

\begin{proof}
	It is a straightforward consequence of the estimate \eqref{Est-I-hat-Y} derived in Proposition \ref{Prop-Hat-Y}.	
\end{proof} 
\subsubsection{Conclusion}
Putting together the estimates obtained in Proposition \ref{Prop-Hat-Y}, Lemmas \ref{Lemma-Est-hat-f} and \ref{Lem-Estimates-Init} and using Theorem \ref{Thm-Control-NS-linear}, we get the following result:
\begin{proposition}
	\label{Prop-F-Well-Defined}
	Let $(\check \rho_0, \check u_0)$ in $H^2(\T_L) \times H^2(\T_L)$ satisfying \eqref{init-cond-torus} for some $\delta >0$, $(\widehat \rho, \widehat u) \in \mathscr{C}_R$ for some $R \in (0,R_0)$ with $R_0$ given by Proposition \ref{Prop-Hat-Y}.  
	Then the map $\mathscr{F}$ in \eqref{Def-FixedPointMap} is well-defined, and there exist a constant $C$ such that $(\rho, u) = \mathscr{F}(\widehat \rho, \widehat u)$ satisfies
	\begin{equation}
		\label{Norm-rho-u}
		\norm{( \rho e^{5s \Phi/6}, u e^{5 s \Phi/6})}_{(L^\infty(H^2) \cap H^1 (L^2))\times (L^2(H^3)\cap L^\infty(H^2) \cap H^1(H^1))}
		\leq 
		C R^2 + C \delta. 
	\end{equation}
	Besides, the condition \eqref{ConditionSupport} is satisfied for $\chi$ defined in \eqref{Def-Chi}.
\end{proposition}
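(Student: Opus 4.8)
The proof is essentially a bookkeeping step: it assembles the estimates of Proposition~\ref{Prop-Hat-Y}, Lemmas~\ref{Lemma-Est-hat-f} and~\ref{Lem-Estimates-Init}, and feeds them into Theorem~\ref{Thm-Control-NS-linear}. The plan is as follows. First I would note that since $(\widehat\rho,\widehat u)\in\mathscr{C}_R$ with $R<R_0$, the velocity component $\widehat u$ satisfies the smallness assumption \eqref{Estimate-Hat-u}; hence Proposition~\ref{Prop-Hat-Y} applies, so that the flow map $\widehat Y$ of \eqref{Def-Hat-Y} and its inverse $\widehat Z$ of \eqref{Def-Hat-Z} are well-defined, close to the identity in the weighted norms \eqref{Est-I-hat-Y}--\eqref{Est-D-2-hat-Z}, and the support condition \eqref{ConditionSupport} holds. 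The latter is already the last assertion of the Proposition.

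Next, with $\widehat Y$ and $\widehat Z$ in hand, the source terms $f_\rho(\widehat\rho,\widehat u)$ and $f_u(\widehat\rho,\widehat u)$ defined by \eqref{SourceTermRho-hat}--\eqref{SourceTermU-hat} are meaningful, and Lemma~\ref{Lemma-Est-hat-f} bounds them:
\[
	\norm{(f_\rho(\widehat\rho,\widehat u)\, e^{s\Phi},\, f_u(\widehat\rho,\widehat u)\, e^{7s\Phi/6})}_{L^2(H^2)\times L^2(H^1)}\leq CR^2.
\]
In particular $f_\rho(\widehat\rho,\widehat u)\, e^{s\Phi}\in L^2(0,T;H^2(\T_L))$ and $f_u(\widehat\rho,\widehat u)\, e^{7s\Phi/6}\in L^2(0,T;H^1(\T_L))$, which are precisely the regularity requirements of Theorem~\ref{Thm-Control-NS-linear}. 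For the initial data, Lemma~\ref{Lem-Estimates-Init} gives $\norm{(\widehat\rho_0,\widehat u_0)}_{H^2\times H^2}\leq C\delta$; since $s=s_0$ is fixed throughout this section and $\Phi(0)$ is a constant, the weights $e^{s\Phi(0)}$ and $e^{7s\Phi(0)/6}$ are harmless constants, so $\norm{\widehat\rho_0\, e^{s\Phi(0)}}_{H^2}+\norm{\widehat u_0\, e^{7s\Phi(0)/6}}_{H^2}\leq C\delta$.

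It then only remains to invoke Theorem~\ref{Thm-Control-NS-linear} with $\widehat f_\rho=f_\rho(\widehat\rho,\widehat u)$ and $\widehat f_u=f_u(\widehat\rho,\widehat u)$: this produces the controlled trajectory $(\rho,u)=\mathscr{G}(\widehat\rho_0,\widehat u_0,f_\rho(\widehat\rho,\widehat u),f_u(\widehat\rho,\widehat u))$ of \eqref{Navier-Stokes-Linear} with initial data $(\widehat\rho_0,\widehat u_0)$ satisfying \eqref{ControlReq}, so that $\mathscr{F}(\widehat\rho,\widehat u)$ in \eqref{Def-FixedPointMap} is well-defined. Estimate \eqref{Estimate-Control-Linear-2}, applied to this trajectory, bounds $\norm{(\rho e^{5s\Phi/6},u e^{5s\Phi/6})}$ in the claimed space by $C\norm{(f_\rho e^{s\Phi},f_u e^{7s\Phi/6})}_{L^2(H^2)\times L^2(H^1)}+C\norm{(\widehat\rho_0 e^{s\Phi(0)},\widehat u_0 e^{7s\Phi(0)/6})}_{H^2\times H^2}$, and combining this with the two bounds above yields \eqref{Norm-rho-u}.

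There is no genuine difficulty beyond the assembly; the points that require care are the matching of the exponential weights across the various estimates — in particular the inequality $5/6+5/12>7/6$ already used in Lemma~\ref{Lemma-Est-hat-f} when pairing $\widehat\rho,\widehat u$ (carrying weight $e^{5s\Phi/6}$) with the derivatives of $\widehat Z$ (carrying weight $e^{5s\Phi/12}$) — and the fact that, $s=s_0$ being fixed once and for all, constants depending on $s$ such as $e^{s_0\Phi(0)}$ may be freely absorbed into $C$.
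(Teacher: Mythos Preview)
Your proposal is correct and follows exactly the same approach as the paper, which simply states that the result follows by putting together the estimates of Proposition~\ref{Prop-Hat-Y}, Lemmas~\ref{Lemma-Est-hat-f} and~\ref{Lem-Estimates-Init}, and Theorem~\ref{Thm-Control-NS-linear}. Your write-up is in fact more detailed than the paper's own one-line justification, and the remarks about the matching of the weights and the absorption of $e^{s_0\Phi(0)}$ into $C$ (since $s=s_0$ is fixed in this section) are apt.
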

Proposition \ref{Prop-F-Well-Defined} is the core of the fixed point argument developed below. 
\subsection{The fixed point argument}
Let $(\rho_0, u_0) \in H^2(\Omega) \times H^2(\Omega)$ satisfying \eqref{Smallness-Init} with $\delta>0$. Choosing an extension $(\check \rho_0, \check u_0)$ of $(\rho_0, u_0)$ satisfying \eqref{Norm-Init-check}, $(\check \rho_0, \check u_0)$ satisfies \eqref{init-cond-torus}. Therefore, from Proposition \ref{Prop-F-Well-Defined}, the map $\mathscr{F}$ in \eqref{Def-FixedPointMap} is well-defined for $(\widehat \rho, \widehat u) \in \mathscr{C}_R$ for $R \in (0,R_0)$, with $R_0 >0$ given by Proposition \ref{Prop-Hat-Y}, and $(\rho, u) = \mathscr{F}(\widehat \rho, \widehat u)$ satisfies \eqref{Norm-rho-u} with some constant $C>0$. We now choose $R \in( 0,R_0)$  such that $CR <1/2$ and $\delta = R/(2C)$, so that as a consequence of Proposition \ref{Prop-F-Well-Defined}, $\mathscr{F}$ maps $\mathscr{C}_R$ into itself. We are then in a suitable position to use a Schauder fixed point argument, and we now fix the parameters $R>0$, $\delta >0$ such that $\mathscr{C}_{R}$ is stable by the map $\mathscr{F}$.
\par
Let us then notice that the set $\mathscr{C}_{R}$ is convex and compact when endowed with the $(L^2(0,T;L^2(\T_L)))^2$-topology, as a simple consequence of Aubin-Lions' Lemma, see e.g. \cite{Simon}.
\par
We then focus on the continuity of the map $\mathscr{F}$ on $\mathscr{C}_{R}$ endowed with the $(L^2(0,T; L^2(\T_L)))^2$ topology. Let us consider a sequence $(\widehat  \rho_n, \widehat  u_n)$ in $\mathscr{C}_{R}$ converging strongly in $(L^2(0,T; L^2(\T_L)))^2$ to some element $(\widehat \rho, \widehat  u)$. The set $\mathscr{C}_{R}$ is closed under the topology of $(L^2(0,T; L^2(\T_L)))^2$ and therefore $(\widehat  \rho, \widehat  u) \in \mathscr{C}_{R}$ and we have the weak-$*$ convergence of $(\widehat  \rho_n, \widehat  u_n)$ towards $(\widehat  \rho, \widehat  u)$ in $(L^\infty(H^2) \cap H^1(L^2)) \times (L^2(H^3) \cap H^1(H^1))$. Using the bounds defining $\mathscr{C}_{R}$, by Aubin-Lions' lemma we also have the following convergences to $(\widehat  \rho, \widehat  u)$:
\begin{equation}
	\label{Convergences-rho-u}
	\begin{array}{ll}
		\widehat  \rho_n \underset{n\to\infty} \rightarrow \widehat  \rho & \hbox{strongly in } L^\infty(0,T; L^\infty(\T_L)),
		\\
		\widehat  \rho_n \underset{n\to\infty}\rightarrow \widehat  \rho & \hbox{strongly in } L^2(0,T; H^1(\T_L)), 
		\\
		\widehat  u_n \underset{n\to \infty} \rightarrow \widehat  u & \hbox{strongly in } L^2(0,T; H^2(\T_L)).
	\end{array}
\end{equation}
To each $\widehat u_n$, we associate the corresponding flow $\widehat Y_n$ defined by 
\begin{equation}
	\label{Def-Y-n}
	\partial_t \widehat Y_n + \overline u \cdot \nabla \widehat Y_n = \overline u + \widehat u_n \hbox{ in } (0,T) \times \T_L, \qquad \widehat Y_n(T,x) = x \hbox{ in } \T_L,
\end{equation}
and the respective inverse $\widehat Z_n$ as in \eqref{Def-Hat-Z} (which is well-defined thanks to Proposition \ref{Prop-Hat-Y}). The sequence $\widehat Y_n$ is bounded in $L^\infty(H^3) \cap W^{1, \infty} (H^2)$ according to \eqref{Est-I-hat-Y} and then converge weakly-$*$ in $L^\infty(H^3) \cap W^{1,\infty}(H^2)$. Passing to the limit in Equation \eqref{Def-Y-n}, we easily get that the weak limit of the sequence $\widehat Y_n$ is $\widehat Y$ defined by \eqref{Def-Hat-Y}. Besides, by Aubin-Lions' lemma and the weak convergence of $\widehat Y_n$ towards $\widehat Y$, we also have the strong convergence of $\widehat Y_n$ to $\widehat Y$ in $W^{1/4, 5}(0,T; H^{11/4}(\T_L))$ and therefore in $C^0([0,T]; C^1(\T_L))$. Consequently, the sequence $\widehat Z_n$ strongly converges to $\widehat Z$ in $C^0([0,T]; C^1(\T_L))$. These strong convergences allow to show that 
\begin{equation}
	\label{Weak-Convergences-flot-init}
	\begin{array}{ccc l}
	D\widehat Z_n(t, \widehat Y_n(t,x)) &\underset{n\to\infty} \rightharpoonup & D \widehat Z(t,\widehat Y(t,x))
	& \hbox{ in } \mathscr{D}'((0,T) \times \T_L),
	\smallskip
	\\
	(\widehat \rho_{0,n}, \widehat u_{0,n}) & \underset{n\to \infty}\rightharpoonup & (\widehat \rho_{0}, \widehat u_{0}) 
	& \hbox{ in } (\mathscr{D}'((0,T) \times \T_L))^2, 
	\end{array}
\end{equation}
where $(\widehat \rho_{0,n}, \widehat u_{0,n}) = (\check \rho_0(\widehat Y_n(0,x)), \check u_0(\widehat Y_n(0,x)))$. 
From the uniform bounds \eqref{Est-I-hat-Z-1}--\eqref{Est-I-hat-Z-2} on the quantity $D\widehat Z_n(t, \widehat Y_n(t,x)) -I$ and Aubin-Lions' Lemma, we also deduce that 
\begin{equation}
	\label{Strong-Convergence-DZ-n}
	D\widehat Z_n(t, \widehat Y_n(t,x)) -I  \underset{n\to \infty}\rightarrow  D\widehat Z(t, \widehat Y(t,x)) -I \hbox{ strongly in } L^2(0,T; H^1(\T_L)).
\end{equation}
Using then the uniform bound \eqref{Est-D-2-hat-Z}, the identity 
$$
	D^2 \widehat Z_n(t,\widehat Y_n(t,x)) = D( D \widehat Z_n(t,\widehat Y_n(t,x))) D\widehat Z_n(t,\widehat Y_n(t,x)),$$
and the convergence \eqref{Strong-Convergence-DZ-n}, we also conclude that 
\begin{equation}
	\label{Weak-Convergence-D2Z-n}
	D^2 \widehat Z_n(t, \widehat Y_n(t,x)) \underset{n\to\infty}\rightharpoonup D^2 \widehat Z(t, \widehat Y(t,x)) \hbox{ weakly in } L^2(0,T; H^1(\T_L)).
\end{equation}
Combining the above convergences, we easily obtain that the functions $f_\rho(\widehat  \rho_n, \widehat  u_n)$ and $f_u(\widehat  \rho_n, \widehat  u_n)$ weakly converge to $f_\rho(\widehat  \rho, \widehat  u)$ and $f_u(\widehat  \rho, \widehat  u)$ in $L^1(0,T; H^1(\T_L))$, and with Lemma~\ref{Lemma-Est-hat-f}, weakly in the weighted Sobolev space described by \eqref{Estimate-Fs-hat}. 
As the control process $\mathscr{G}$ in Theorem \ref{Thm-Control-NS-linear} is linear continuous in $(\widehat \rho_0, \widehat u_0, \widehat  f_\rho, \widehat  f_u)$, it is weakly continuous. Hence $(\rho_n, u_n) = \mathscr{F} (\widehat  \rho_n, \widehat  u_n) = \mathscr{G}(\widehat \rho_{0,n}, \widehat u_{0,n}, f_\rho(\widehat  \rho_n, \widehat  u_n), f_u(\widehat  \rho_n, \widehat  u_n))$ weakly converges to $(\rho, u) = \mathscr{F}(\widehat  \rho, \widehat  u) =  \mathscr{G}(\widehat \rho_{0}, \widehat u_{0}, f_\rho(\widehat  \rho, \widehat  u), f_u(\widehat  \rho, \widehat  u))$ in the sense of distributions. 
But we know that $\mathscr{C}_{R}$ is stable by the map $\mathscr{F}$, so that $(\rho_n, u_n)$ all belong to the set $\mathscr{C}_{R}$, which is compact for the $(L^2(0,T;L^2(\T_L)))^2$ topology. Therefore, $(\rho_n, u_n) = \mathscr{F} (\widehat  \rho_n, \widehat  u_n)$ strongly converges to $(\rho, u) = \mathscr{F}(\widehat  \rho, \widehat  u)$ in $(L^2(0,T; L^2(\T_L)))^2$. 
\par
We conclude by applying Schauder's fixed point theorem to the map $\mathscr{F}$ on $\mathscr{C}_R$. This yields a fixed point $(\rho, u) = \mathscr{F}(\rho,u)$ which by construction solves the control problem \eqref{Navier-Stokes-Tothefixpoint}--\eqref{InitData-0-1}--\eqref{ControlReq}. 
\par
To go back to the original system \eqref{Navier-Stokes-Tothefixpoint-0}, we define $Y$ as the solution of 
$$
	\partial_t Y + \overline u\cdot \nabla Y = \overline u + u \hbox{ in } (0,T) \times \T_L, \qquad Y(T, x) = x \hbox{ in } \T_L, 
$$
and $Z = Z(t,x)$ such that for all $t \in [0,T]$, $Z(t,\cdot)$ is the inverse of $Y(t,\cdot)$, which is well-defined according to Proposition \ref{Prop-Hat-Y}. We then simply set, for all $(t,x) \in [0,T] \times \T_L$,
\begin{equation}
	\label{Dico-Back}
	\check \rho(t,x) = \rho(t, Z(t,x)), \quad \check u(t,x) = u(t, Z(t,x)).
\end{equation}
By construction, $(\check \rho, \check u)$ solves \eqref{Navier-Stokes-Tothefixpoint-0}--\eqref{InitData-0} and the controllability requirement \eqref{ControlReq-0} with control functions $(\check v_\rho, \check v_u)$ defined for $(t,x) \in [0,T]\times \T_L$ by
$$
	\check v_\rho(t,x)= \chi(Z(t,x)) v_\rho(t, Z(t,x)), \quad \check v_u(t,x) = \chi(Z(t,x)) v_u(t,Z(t,x)).
$$
These control functions are supported in $[0,T] \times (\T_L \setminus \overline\Omega)$ thanks to \eqref{ConditionSupport}, so that by restriction on $\Omega$, we get a solution $(\rho_\S,u_S) = (\overline \rho, \overline u) + (\check \rho, \check u)$ of \eqref{Navier-Stokes} satisfying \eqref{NS-Init}--\eqref{NS-Goal}.
\par
To get the regularity estimate in \eqref{Regularity-Controlled-Data}, we first show that the fixed point $(\rho,u)$ of $\mathscr{F}$ satisfies 
$$
	(\rho, u) \in C^0([0,T]; H^2(\T_L)) \times (L^2(0,T; H^3(\T_L)) \cap C^0([0,T]; H^2(\T_L))), 
$$
which is a consequence of \eqref{Estimate-Control-Linear-2}. From these regularity results on $(\rho, u)$, \eqref{Dico-Back} and the regularity estimates obtained on $Z$ in Proposition \ref{Prop-Hat-Y}, we deduce that 
$$
	(\check \rho, \check u) \in C^0([0,T]; H^2(\T_L)) \times (L^2(0,T; H^3(T_L)) \cap C^0([0,T]; H^2(\T_L))), 
$$
and, consequently, \eqref{Regularity-Controlled-Data}. 
%
%
%
\section{Further comments}\label{Sec-Further}
%
%
\noindent{\bf The case of non-constant trajectories.} Our result only considers the local exact controllability around a constant state. The next question concerns the case of local exact controllability around non-constant target trajectories, similarly as what has been done in the context of non-homogeneous incompressible Navier-Stokes equations in \cite{BEG} and in the context of compressible Navier-Stokes equation in one space dimension in the recent preprint \cite{Ervedoza-Savel}. We expect such results to be true provided the target trajectory is sufficiently smooth and assuming some geometric condition on the flow of the target velocity field $\overline{u}$. Namely, if we denote by $\overline{X}$ the flow corresponding to $\overline{u}$, i.e. given by 
\begin{equation}
	\frac{d}{dt} \overline{X}(t, \tau, x) = \overline{u} (t, \overline{X}(t, \tau, x)), \quad \overline{X}(\tau, \tau, x) = x, 
\end{equation}
it is natural to expect a geometric condition of the form
\begin{equation}
	\label{Geometric-Condition-Gal}
	\forall x \in \overline\Omega,\, \exists t \in (0,T),\, s.t. \quad  \overline{ X} (t, 0, x) \notin \overline{\Omega}, 
\end{equation}
corresponding to the time condition \eqref{Time-Condition} in the case of a constant velocity field. 
But considering the case of a non-constant velocity field would introduce many new terms in the proof and make it considerably more intricate, including for instance the difficulty to control the density when recirculation appears close to the boundary of $\Omega$. This issue needs to be carefully analyzed and discussed.
\smallskip
\\
\noindent{\bf Controllability from a subset of the boundary.} From a practical point of view, it seems more reasonable to control the velocity and the density from some part of the boundary in which the target velocity field $\overline{u}$ enters in the domain. But in this case, one needs to make precise what are the boundary conditions on the velocity field $u_\S$. 

One could think for instance to Dirichlet boundary conditions of the form $u_\S = \overline{u}$ on the outflow boundary $\Gamma_{\rm out}$. But this would mean that one should find a solution $(\rho,u)$ of the control problem \eqref{Navier-Stokes-Tothefixpoint}--\eqref{InitData}--\eqref{ControlReq} with boundary conditions $u = 0$ on the outflow boundary $\Gamma_{\rm out}$. This would introduce a lot of additional technicalities as the dual variable $z$ would also satisfy Dirichlet homogeneous boundary conditions on the outflow boundary  $\Gamma_{\rm out}$. The variable $q$ in \eqref{Def-q} would therefore have non-homogeneous Dirichlet boundary conditions on $(0,T) \times \Gamma_{\rm out}$ and careful estimates should be done to recover estimates on $z$, for instance based on the delicate Carleman estimates proved in \cite{ImanuvilovPuelYam-2009}. 
%
%
%
\appendix
\section{Computations of the equations satisfied by $(\rho,u)$}\label{Appendix}

According to \eqref{Dico-rho-u-tilde-rho-u}, we have, for all $(t, x) \in [0,T] \times \T_L$, 
$$
	\rho(t, X_0(t,T,x)) = \check  \rho(t, X_{\check  u}(t,T,x)), 
	\quad
	u(t, X_0(t,x)) = \check  u(t, X_{\check  u}(t,T,x)), 
$$
so that differentiating in $t$, we easily derive 
\begin{align*}
	& 
	(\partial_t \rho + \overline u\cdot \nabla \rho)(t,Z_{\check  u}(t,x))
	= 
	(\partial_t \check  \rho + (\overline u + \check  u) \cdot \nabla \check  \rho)(t,x)
	\\
	&
	(\partial_t u + \overline u\cdot \nabla u)(t,Z_{\check  u}(t,x))
	= 
	(\partial_t \check  u + (\overline u + \check  u) \cdot \nabla \check  u)(t,x).	
\end{align*}
We then write 
$$
	\check  \rho(t,x) = \rho(t, Z_{\check  u}(t,x)), 
	\quad 
	\check  u(t,x) = u(t,Z_{\check  u}(t,x)), 
$$
which allows us to obtain
\begin{align*}
	&
	\div(\check  u)(t,x) = \sum_{i,j = 1}^d \partial_i Z_{j,\check  u}(t,x) \partial_{j} u_i(t,Z_{\check  u}(t,x)),
	\qquad 
	\Big( \div(\check  u)(t,x) = DZ_{\check  u}^t(t,x) : Du (t,Z_{\check  u}(t,x)) \Big)
	\\
	&
	\Delta \check  u_i (t,x) = 
	 \sum_{j, k, \ell = 1}^d \partial_{k,\ell} u_i(t,Z_{\check  u} (t,x)) \partial_j Z_{k,\check  u} (t,x) \partial_j Z_{\ell,\check  u} (t,x) 
	 + \sum_{k =1}^d \partial_k u_i(t, Z_{\check  u}(t,x)) \Delta Z_{k, \check  u}(t,x),
	\\
	&
	\partial_i \div \check  u(t,x) = 
	\sum_{j,k, \ell =1}^d \partial_{k, \ell} u_j(t,Z_{\check  u}(t,x))\partial_j Z_{k,\check  u} (t,x) \partial_i Z_{\ell, \check  u}(t,x)
	+ \sum_{j,k= 1}^d \partial_{i,j} Z_{k,\check u}(t,x) \partial_k u_j(t,Z_{\check  u}(t,x)),
	\\
	& 
	\partial_i \check  \rho(t,x) = \partial_j \rho (t,Z_{\check  u}(t,x)) \partial_i Z_{j,\check  u}(t,x), 
	\qquad
	\Big( \nabla \check  \rho (t,x) = D Z_{\check  u}(t,x)^t \nabla \rho(t, Z_{\check  u}(t,x)) \, \Big).
\end{align*}
Consequently we get for all $(t,x) \in [0,T] \times \T_L$,
\begin{multline*}
	(\partial_t \check  \rho + (\overline u + \check  u) \cdot \nabla \check  \rho + \overline\rho \div \check  u)(t,x)
	= 
	(\partial_t \rho + \overline u\cdot \nabla \rho + \overline{\rho} DZ_{\check  u}^t (t,Y_{\check  u}(t,\cdot)): Du)(t,Z_{\check  u}(t,x))
	\\
	 = 
	(\partial_t \rho + \overline u\cdot \nabla \rho + \overline{\rho} \div u + \overline{\rho} (DZ_{\check  u}^t (t,Y_{\check  u}(t,\cdot)) - I) : Du)(t,Z_{\check  u}(t,x)). 
\end{multline*}
Then we simply check that 
$$
	\check f(\check \rho, \check u)(t,x) = 
	- 
	\left(
	\rho DZ_{\check u}^t (t,Y(t,\cdot))	: Du 
	\right) (t, Z_{\check u}(t,x)).
$$
We shall therefore look for $\rho$ satisfying the equation 
$$
	\partial_t \rho + \overline u\cdot \nabla \rho + \overline{\rho} \div u = \chi v_\rho + f_\rho(\rho, u),		
$$
where $f_\rho(\rho, u)$ is defined by 
$$
	f_\rho(\rho, u) = - \rho DZ_{\check u}^t (t,Y(t,x)): Du -  \overline{\rho} (DZ_{\check  u}^t (t,Y_{\check  u}(t,x)) - I) : Du).
$$
Similarly, for all $(t,x) \in [0,T] \times \T_L$, 
\begin{align*}
	\lefteqn{
	\left(
	\overline{\rho} (\partial_{t} \check  u_i + (\overline{u} + \check  u)\cdot \nabla \check  u_i )  - \mu \Delta \check  u_i - (\lambda + \mu) \partial_i \div (\check  u) 
+ p'(\overline{\rho}) \partial_i \check  \rho 
	\right)(t,Y_{\check u}(t,x)) 
	}
	\\
	&= 
	\left(
	\overline{\rho} (\partial_{t} u_i + \overline{u} \cdot \nabla u_i )  - \mu \Delta u_i - (\lambda + \mu) \partial_i \div( u) 
+ p'(\overline{\rho}) \partial_i \rho 
	\right)(t,x)
	\\
	& 
	- \mu 
	\left(
	 \sum_{j,k, \ell = 1}^d \partial_{k,\ell} u_i \left(\partial_j Z_{k, \check  u}(t, Y_{\check  u}(t,x)) - \delta_{j,k}\right)\left(\partial_j Z_{\ell, \check  u}(t, Y_{\check  u}(t,x)) - \delta_{j,\ell}\right) + \sum_{k = 1}^d \partial_k u_i \Delta Z_{k, \check  u}(t,Y_{\check  u}(t,x)) 
	\right)
	\\
	&
	- (\lambda + \mu) 
	\left( 	
	\sum_{j,k, \ell =1}^d \partial_{k, \ell} u_j (\partial_j Z_{k,\check  u} (t,Y_{\check  u}(t,x)) - \delta_{j,k}) (\partial_i Z_{\ell, \check  u}(t,Y_{\check  u}(t,x)) - \delta_{i, \ell})
	\right) 
	\\
	& -(\lambda +\mu) 
	\left(
		\sum_{j,k= 1}^d \partial_{i,j} Z_{k,\check u} (t, Y_{\check  u}(t,x)) \partial_k u_j
	\right) 
	+ p'(\overline{\rho}) 
	\left(
	(DZ_{\check u} ^t(t, Y_{\check  u}(t,x)) - I) \nabla \rho
	\right)  , 
\end{align*}
where as before $\delta_{j,k}$ is the Kronecker symbol. We then compute 
$$
	\check f_{u}(\check \rho, \check u)(t,Y_{\check u}(t,x))
	= 
	- \rho (\partial_t u + \overline u \cdot \nabla u) 
	+ 
	D Z_{\check u}(t,Y_{\check u}(t,x))^t \nabla ( p(\overline \rho + \rho) - p'(\overline\rho) \rho).   
$$
We are therefore led to look for $u$ as a solution of the equation 
\begin{equation*}
	\overline{\rho} (\partial_{t} u + \overline{u} \cdot \nabla u)  - \mu \Delta u - (\lambda + \mu) \nabla \div( u) 
+ p'(\overline{\rho}) \nabla \rho = \chi v_u + f_u(\rho, u), 
\end{equation*}
where $f_u(\rho, u)$ is given componentwise by 
\begin{align*}
	& 
	f_{i,u}(\rho, u) 
	= 
	- \rho (\partial_t u_i + \overline u \cdot \nabla u_i) 
	+ 
	\sum_{j = 1}^d \partial_i Z_{j,\check u}(t,Y_{\check u}(t,x)) \partial_j ( p(\overline \rho + \rho) - p'(\overline\rho) \rho)
	\\
	&
	+ \mu 
	\left(
	 \sum_{j,k, \ell = 1}^d \partial_{k,\ell} u_i \left(\partial_j Z_{k, \check  u}(t, Y_{\check  u}(t,x)) - \delta_{j,k}\right)\left(\partial_j Z_{\ell, \check  u}(t, Y_{\check  u}(t,x)) - \delta_{j,\ell}\right) + \sum_{k = 1}^d \partial_k u_i \Delta Z_{k, \check  u}(t,Y_{\check  u}(t,x)) 
	\right)
	\\
	&
	+ (\lambda + \mu) 
	\left( 	
	\sum_{j,k, \ell =1}^d \partial_{k, \ell} u_j (\partial_j Z_{k,\check  u} (t,Y_{\check  u}(t,x)) - \delta_{j,k}) (\partial_i Z_{\ell, \check  u}(t,Y_{\check  u}(t,x)) - \delta_{i, \ell})
	\right) 
	\\
	& 
	+(\lambda +\mu) 
	\left(
		\sum_{j,k= 1}^d \partial_{i,j} Z_{k,\check u}(t, Y_{\check  u}(t,x)) \partial_k u_j
	\right) 
	- p'(\overline{\rho}) 
	\left(
	 \sum_{j = 1}^d ( \partial_i Z_{j,\check u}(t,Y_{\check u}(t,x)) - \delta_{i,j}) \partial_j \rho
	\right) .
\end{align*}
\bibliographystyle{plain}

\end{document}